\documentclass[12pt]{amsart}
\usepackage{amsmath,amssymb,amsthm,bm,longtable,color}
 \usepackage{a4wide}

\pagestyle{plain}

\newtheorem{thm}{Theorem}[section]
\newtheorem*{thm*}{Theorem}
\newtheorem{conj}[thm]{Conjecture}
\newtheorem{cor}[thm]{Corollary}
\newtheorem*{conj*}{Conjecture}
\newtheorem{lem}[thm]{Lemma}
\newtheorem{prop}[thm]{Proposition}

\newtheorem{qn}[thm]{Question}

\theoremstyle{remark}
\newtheorem{rem}[thm]{Remark}

\theoremstyle{definition}
\newtheorem{defn}[thm]{Definition}
\newcounter{claim}[thm]

\newcommand{\soc}{\mathrm{soc}}
\newcommand{\PSL}{\mathrm{PSL}}

\newcommand{\sym}{\mathrm{Sym}}
\newcommand{\alt}{\mathrm{Alt}}


\newcommand{\sh}{\mathrm{Sh}}
\newcommand{\GL}{\mathrm{GL}}
\newcommand{\AGL}{\mathrm{AGL}}
\newcommand{\supp}{\mathrm{supp}}
\newcommand{\PSU}{\textnormal{PSU}}
\newcommand{\Ree}{\textnormal{Ree}}
\newcommand{\Sz}{\textnormal{Sz}}
\newcommand{\aut}{\textnormal{Aut}}

\newcommand{\Sp}{\textnormal{Sp}}

\title{Generalised shuffle groups}

\author{Carmen Amarra, Luke Morgan, Cheryl~E.~Praeger}

\thanks{The first author was supported by a Post Doctoral Research Award (FRASDP) of the University of the Philippines. The second author acknowledges the Australian Research Council Grant DE160100081 and the  Slovenian Research Agency (research program P1-0285).}

\address{Carmen Amarra \\ Institute of Mathematics, University of the Philippines Diliman, C. P. Garcia Avenue, Diliman, Quezon City 1101, Philippines }
\email{mcamarra@math.upd.edu.ph}

\address{Luke Morgan \\ University of Primorska, UP FAMNIT, Glagolja\v{s}ka 8, 6000 Koper, Slovenia, and University of Primorska, UP IAM, Muzejski trg 2,  6000 Koper, Slovenia.}
\email{luke.morgan@famnit.upr.si}

\address{Cheryl E.~Praeger \\ Centre for the Mathematics of Symmetry and Computation, Department of Mathematics and Statistics, The University of Western Australia, 35 Stirling Highway, Crawley, 6009, Western Australia, Australia}
\email{cheryl.praeger@uwa.edu.au}

\subjclass[2010]{Primary 20B25; Secondary 05E18}
\keywords{card shuffling; permutation groups; primitive groups}

\begin{document}

\begin{abstract}
The mathematics of shuffling a deck of $2n$ cards with two ``perfect shuffles'' was brought into clarity by Diaconis, Graham and Kantor. Here we consider a generalisation of this problem, with a so-called ``many handed dealer'' shuffling $kn$ cards by cutting into $k$ piles with $n$ cards in each pile and using $k!$ shuffles. A conjecture of Medvedoff and Morrison suggests that all possible permutations of the deck of cards are achieved, so long as $k\neq 4$ and $n$ is not a power of $k$.  We confirm this conjecture for three doubly infinite families of integers: all $(k,n)$ with $k>n$; all $(k, n)\in \{ (\ell^e, \ell^f )\mid \ell \geqslant 2, \ell^e>4, f \ \mbox{not a multiple of}\ e\}$; and all $(k,n)$ with 
$k=2^e\geqslant 4$ and $n$ not a power of $2$. We open up a more general study of shuffle groups, which admit an arbitrary subgroup of shuffles.
\end{abstract}

\maketitle

\section{Introduction}

The crux of a card trick performed with a deck of cards usually depends on understanding how shuffles of the deck  change the order of the cards. By understanding which permutations are possible, one knows if a given card may be brought into a certain position. The two standard ``perfect'' shuffles of a deck of 52 cards involve ``cutting'' the deck into two piles, holding one pile in each hand, and then perfectly interlacing the two piles. Depending on whether the card from the left or right hand pile ends up on top, these two shuffles are referred to as the in-shuffle and out-shuffle respectively. 


To understand fully what permutations can be achieved on a deck of cards via unlimited repetitions of these two operations, we have to know the permutation group generated by the two shuffles. If we have a set of $2n$ cards (for an integer $n$) labelled with integers, and write $\sigma$ and $\delta$ for the permutations induced by the in- and out-shuffle, respectively, we are asking about the structure of the subgroup $\langle \sigma ,\delta \rangle$  of $\sym(2n)$. Diaconis, Graham and Kantor \cite{DGK} were the first to answer this problem completely -- although there were partial results earlier for certain values of $n$ (see the discussion in \cite[\S3]{DGK}).
They observed first that the group $\langle \sigma , \delta \rangle$ preserves ``central symmetry'', that is, the group preserves a partition of the $2n$ cards into $n$ parts each of size two, and so $\langle \sigma , \delta \rangle$ is a subgroup of the Weyl group of type $B_n$, which we denote simply by $B_n$; the group $B_n\cong C_2 \wr \sym(n)$ (an example of a wreath product in imprimitive action).  For a precise description of $\langle \sigma, \delta \rangle$  as a subgroup of $B_n$, we require some definitions.




The derived subgroup of $B_n$ has index four, and the quotient is elementary abelian. Thus there are three index two subgroups, which are the kernels of sign-like homomorphisms. If $g\in B_n$, $\mathrm{sgn}(g)$ is the sign of $g$ as a permutation of $2n$ cards and $\overline{\mathrm{sgn}}(g) $ is the sign of $g$ as a permutation of the $n$ parts of size two. Both maps are homomorphisms from $B_n$ to $\{-1,1\}$, so there is a third map which is their product $\mathrm{sgn}\overline{\mathrm{sgn}}(g):=\mathrm{sgn}(g)\overline{\mathrm{sgn}}(g)$.
With these definitions, the structure found by Diaconis et al.~ is as follows.

\begin{thm} \cite[Theorem 1]{DGK} \label{dgk}
The structure of the  shuffle group $\langle \sigma, \delta \rangle$ on $2n$ points, where $n\geqslant2$, is given in Table~\ref{tab:the shuffle group}.
\end{thm}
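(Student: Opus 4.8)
The plan is to pin down $G:=\langle\sigma,\delta\rangle$ by proving matching upper and lower bounds inside $B_n\cong C_2\wr\sym(n)$, organised around the imprimitive structure (blocks of size two) that central symmetry provides.

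\textbf{Upper bound.} By the central-symmetry observation $G\le B_n$. Writing an element of $B_n$ as a pair $(v;\pi)$ with $v\in\mathbb F_2^n$ and $\pi\in\sym(n)$, one checks that $\mathrm{sgn}(v;\pi)=(-1)^{\mathrm{wt}(v)}$ and $\overline{\mathrm{sgn}}(v;\pi)=\mathrm{sgn}(\pi)$, so the three index-two subgroups of $B_n$ are cut out respectively by the parity of $\mathrm{wt}(v)$, the parity of $\pi$, and their sum. I would first write $\sigma$ and $\delta$ explicitly as such pairs: $\delta$ fixes the block $\{0,2n-1\}$ and acts on the remaining blocks like multiplication by $2$ on $(\mathbb Z/(2n-1)\mathbb Z)/\{\pm1\}$, and $\sigma$ behaves similarly modulo $2n+1$. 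Reading off $\mathrm{wt}(v)\bmod 2$ and $\mathrm{sgn}(\pi)$ for each generator reduces to counting fixed points of the shuffles and of their squares on blocks, which is a finite congruence computation modulo $2n\pm1$; this is precisely where the case division of Table~\ref{tab:the shuffle group} by the residue of $n$ first appears. The output is the smallest of $B_n$, a kernel of one sign map, or $B_n'$ (the common kernel of two of them) that can possibly contain $G$.

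\textbf{Exceptional cases.} When $n=2^k$ the element $\delta$ has order $k+1$ (the multiplicative order of $2$ modulo $2^{k+1}-1$), its block action is very far from primitive, and $G$ collapses to the small soluble group in the first row of the table; I would treat this row by a direct structural analysis of the two generators. The value $2n=24$ is genuinely sporadic (the resulting group is a well-known one, related to $M_{12}$), and together with a handful of tiny $n$ it is checked individually.

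\textbf{Lower bound, generic case.} Now suppose $n$ is not a power of $2$ and not one of the sporadic values. Let $\overline G\le\sym(n)$ be the action of $G$ on the $n$ blocks and $K:=G\cap C_2^n$ the kernel of that action. \emph{Step 1: $\overline G\trianglerighteq\alt(n)$.} The cycle structure of $\overline\delta$ (resp.\ $\overline\sigma$) is governed by the multiplicative order of $2$ modulo $2n-1$ (resp.\ $2n+1$); using that a primitive prime divisor of $2^m-1$ is large relative to $m$ (Bang--Zsigmondy) one first forces $\overline G$ to be transitive and then primitive, and then exhibits in $\overline G$ a cycle of prime length $p\le n-3$ — obtained from a suitable power of $\overline\delta$ or $\overline\sigma$, or from a short word in them. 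Jordan's theorem on primitive groups containing a prime-length cycle then gives $\overline G\in\{\alt(n),\sym(n)\}$, and which of the two holds was already determined by the $\overline{\mathrm{sgn}}$ computation above. \emph{Step 2: identify $K$.} Since $K\trianglelefteq\overline G\trianglerighteq\alt(n)$, $K$ is an $\mathbb F_2[\alt(n)]$-submodule of the natural permutation module $\mathbb F_2^n$, whose submodule lattice is the chain $0\subset\langle\mathbf 1\rangle\subset E\subset\mathbb F_2^n$ (with $E$ the even-weight vectors) up to the usual adjustments according to $n\bmod 2$. To place $K$ in this chain I would take two words in $\sigma,\delta$ inducing the same block permutation and divide them: the quotient lies in $K$, and its weight and parity can be computed explicitly, which, combined with the upper bound, determines $K$ uniquely. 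Steps 1 and 2 together identify $G$ as the subgroup of $B_n$ listed in Table~\ref{tab:the shuffle group}.

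\textbf{Main obstacle.} The crux is Step 1: obtaining primitivity of the block action and locating a prime-length cycle \emph{uniformly} across all non-exceptional $n$, since the cycle structures of the shuffles depend delicately on the multiplicative order of $2$ modulo $2n-1$ and $2n+1$ and on the factorisations of those numbers; single powers of $\sigma$ or $\delta$ will in general not suffice, so one must combine several words and lean on number-theoretic size estimates. The power-of-$2$ case is exactly where this mechanism breaks down, which is why it must be excised. A lesser but genuine nuisance is keeping the sign computations of the upper bound and the $\mathbb F_2$-module computation of Step 2 consistent with every residue case appearing in the table.
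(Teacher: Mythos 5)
The first thing to note is that the paper does not prove this statement at all: Theorem~\ref{dgk} is quoted verbatim from \cite{DGK} and used as a black box (notably in Section~\ref{sec:cascading}), so there is no ``paper proof'' for your argument to parallel; what you have written is an attempted reproof of the Diaconis--Graham--Kantor theorem itself. As such it must stand on its own, and it does not: the decisive step is missing. Your upper bound (central symmetry, hence $G\leqslant B_n$, plus the $\mathrm{sgn}$ and $\overline{\mathrm{sgn}}$ parities of the two generators, which are finite congruence computations as in Lemma~\ref{lem:parity}) is fine, and Step~2 (locating $K=G\cap C_2^n$ in the short submodule lattice of the $\mathbb F_2[\alt(n)]$-permutation module once the block image contains $\alt(n)$) is routine modulo details. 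But Step~1 --- proving that for $n$ not a power of $2$ and outside the sporadic values the block image contains $\alt(n)$ --- is precisely the hard content of \cite[Theorem 1]{DGK}, and you do not prove it: you only propose a mechanism (transitivity, then primitivity, then a prime-length cycle of length at most $n-3$ extracted from ``a suitable power of $\overline\delta$ or $\overline\sigma$, or a short word in them'', then Jordan's theorem) and yourself label it the main obstacle. There is no argument that such a word exists; the cycle structures of $\overline\sigma$ and $\overline\delta$ are dictated by the multiplicative orders of $2$ modulo the divisors of $2n\mp1$, and powers of these elements generically have large, highly non-uniform support, so producing an element that is a single prime cycle (or even one of small support) is exactly the difficulty that makes the original proof long. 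Zsigmondy-type estimates give you large prime divisors of $2^m-1$, not small-support elements, and primitivity of the block action is likewise asserted rather than derived.

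Secondary gaps: the exceptional rows are not established either. The row $n=2^f$ (group $C_2\wr C_{f+1}$ in product action) is deferred to ``a direct structural analysis of the two generators'' without carrying it out (this is where an identification of $[2^{f+1}]$ with $[2]^{f+1}$ as in Theorem~\ref{thm:MMprod} is needed), and the identifications $C_2^6\rtimes\PGL(2,5)$ at $n=6$ and $C_2^{11}\rtimes M_{12}$ at $n=12$, together with the small-$n$ anomalies and the exact thresholds ($n\geqslant20$, $n\geqslant10$, etc.) in Table~\ref{tab:the shuffle group}, are waved at as ``checked individually'' with no indication of how. So the proposal is a sensible high-level strategy --- and its skeleton (bound above by sign kernels, bound below by controlling the block image and the $\mathbb F_2$-kernel) is indeed the natural frame in which \cite{DGK} works --- but as it stands it is an outline with the crux unproved, not a proof.
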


\begin{table}[h]
\label{tab:the shuffle group}
\begin{tabular}{ l | l }
Size of each pile $n$ & Shuffle group $\langle \sigma , \delta \rangle$ \\
\hline 
$n=2^f$ for some positive integer $f$ &  $C_2 \wr C_{f+1}$ \\ 
$n\equiv 0 \pmod{4}$, $n\geqslant 20$ and $n$ is not a power of $2$ &  $\ker(\mathrm{sgn}) \cap \ker(\overline{\mathrm{sgn}})$  \\ 
$n\equiv 1 \pmod{4}$ and $n\geqslant 5 $ &  $ \ker(\overline{\mathrm{sgn}})$  \\ 
$n\equiv 2 \pmod{4}$ and $n\geqslant 10$  &  $B_n$  \\ 
$n\equiv 3 \pmod{4}$ &  $\ker(\mathrm{sgn}\overline{\mathrm{sgn}})$  \\ 
$n=6$ &  $C_2^6 \rtimes \mathrm{PGL}(2,5)$  \\ 
$n=12$ &  $C_2^{11} \rtimes   M_{12}$\\ \hline
\end{tabular}
\smallskip
\caption{The   shuffle group on $2n$ points}
\end{table}

The appearance of Mathieu's group $M_{12}$ in the  list is quite remarkable. In the case where $n=2^f$, the wreath product  is acting in \emph{product action} on $2^{f+1}$ points (see Section~\ref{sub:perm}). All the groups in Table~\ref{tab:the shuffle group} are imprimitive subgroups of $\sym(2n)$ preserving a partition with $n$ parts of size $2$.


A natural question concerning the mathematics of shuffling is raised at the end of the paper of Diaconis et ~al. We could divide a pack of $kn$ cards into $k$ piles (with the first $n$ cards in the first pile, the second $n$ cards in the second pile, and so on), and then perform a shuffle. There are now more shuffles to consider -- because there are more permutations of the piles of cards. We again denote by $\sigma$ the standard shuffle, where the cards are picked up from left to right without permuting the piles at all. For a permutation $\tau$ of the piles, we have an induced permutation $\rho_\tau \in \sym(kn)$ of the set of cards. The corresponding shuffle obtained by first permuting the piles according to $\tau$ and then picking up the cards is therefore equal to $\rho_\tau \sigma$, and $\sigma = \rho_\tau\sigma$ with $\tau$ the identity element of $\sym(k)$. Thus if $P$ is a group of permutations on the set of $k$ piles, we define
$$\sh(P,n) := \langle \rho_\tau \sigma  \mid \tau \in P \rangle \leqslant \sym(kn)$$
which we call a \emph{generalised shuffle group}. Note that since the identity permutation is in $P$, we have $\sh(P,n ) = \langle \sigma, \rho_\tau \mid \tau \in P \rangle$.
In this notation, the group described by Theorem~\ref{dgk} is $\sh(\sym(2),n)$. 

For a positive integer $m$, we write $[m]=\{0,\dots,m-1\}$ and use this as a set of residues modulo $m$. It is convenient to denote the set of piles by $[k]$ and the set of cards by $[kn]$. 
The case where $P=\sym(k)$ was considered by Medvedoff and Morrison \cite{MM} (they investigated the group $\sh(\sym(k),n)$, which they called $G_{kn,n}$). As a generalisation of the case where $n=2^f$ in Theorem~\ref{dgk}, they showed:


\begin{thm} \cite[Theorem 2]{MM}
\label{thm:MMprod}
Suppose that $n = k^f$ for some positive integers $k$  and $f$ with $k \geqslant 2$. Then the elements of $[kn]$ can be identified with the set $[k]^{f+1}$ of $(f+1)$-tuples of elements of $[k]$, and $\sh(\sym(k),k^f) = \sym(k) \wr C_{f+1}$ in product action on $[k]^{f+1}$.
\end{thm}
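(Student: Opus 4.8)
The plan is to introduce an explicit coordinatisation of the $k^{f+1}$ cards and to verify that, under it, the two types of generators of $\sh(\sym(k),k^f)$ become the two natural types of generators of the wreath product $\sym(k)\wr C_{f+1}$ acting in product action on $[k]^{f+1}$. Write each $a\in[k^{f+1}]$ in base $k$ as $a=\sum_{j=0}^{f}a_{j}k^{j}$ with each $a_j\in[k]$, and identify $a$ with the tuple $(a_f,a_{f-1},\dots,a_0)\in[k]^{f+1}$. Under this identification the pile containing card $a$ is indexed by the leading digit $a_f$, and the depth of $a$ within that pile is $\sum_{j=0}^{f-1}a_jk^j$.

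First I would compute the permutation $\sigma$. Since the standard shuffle places, at position $qk+r$ of the reassembled deck (with $r\in[k]$ the pile and $q\in[k^f]$ the round, so $q=\sum_{j}q_jk^j$), the card lying at position $rk^f+q$ of the original deck, it follows at once that $\sigma$ carries the tuple $(r,q_{f-1},\dots,q_0)$ to $(q_{f-1},\dots,q_0,r)$; that is, $\sigma$ is the cyclic shift of the $f+1$ coordinates, so $\langle\sigma\rangle\cong C_{f+1}$ realises the top group of $\sym(k)\wr C_{f+1}$ permuting coordinates. Next, for $\tau\in\sym(k)$ the pile permutation $\rho_\tau$ sends position $ik^f+q$ to position $\tau(i)k^f+q$, so on tuples it acts by $(a_f,a_{f-1},\dots,a_0)\mapsto(\tau(a_f),a_{f-1},\dots,a_0)$; thus $\{\rho_\tau\mid\tau\in\sym(k)\}$ is exactly the copy of $\sym(k)$ inside the base group $\sym(k)^{f+1}$ acting on the leading coordinate.

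Given these two identifications the result follows from the structure of the wreath product in product action. Set $W:=\sym(k)\wr C_{f+1}$ in product action, viewed as a subgroup of $\sym(k^{f+1})$. Since $\sigma\in W$ and $\rho_\tau\in W$ for all $\tau\in\sym(k)$, and (as noted in the excerpt) $\sh(\sym(k),k^f)=\langle\sigma,\rho_\tau\mid\tau\in\sym(k)\rangle$, we get $\sh(\sym(k),k^f)\leqslant W$. Conversely, conjugating the $\rho_\tau$ by the powers of $\sigma$ produces the copy of $\sym(k)$ acting on each of the $f+1$ coordinates; these copies generate the full base group $\sym(k)^{f+1}$, and adjoining $\sigma$ generates $W$. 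Hence $W\leqslant\sh(\sym(k),k^f)$, giving equality. Finally the product action is faithful because $k\geqslant2$ and $\sym(k)$ acts faithfully on $[k]$, so $\sh(\sym(k),k^f)\cong\sym(k)\wr C_{f+1}$ abstractly as well.

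The routine but delicate part is the bookkeeping in the second paragraph: one must fix conventions for how the $k^{f+1}$ cards are numbered, whether ``picked up from left to right'' sends the first card picked up to the top or the bottom of the reassembled deck, and whether $\rho_\tau$ permutes the piles by $\tau$ or by $\tau^{-1}$. Different choices merely replace $\sigma$ by $\sigma^{-1}$ or relabel which coordinate the $\rho_\tau$ act on, and since $\langle\sigma\rangle=\langle\sigma^{-1}\rangle$ none of this affects the argument; the only genuine content is that $\sigma$ induces a single $(f+1)$-cycle on coordinates while each $\rho_\tau$ touches exactly one coordinate.
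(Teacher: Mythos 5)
Your proof is correct and follows essentially the same route as the paper: although the paper itself quotes this statement from Medvedoff--Morrison, its own generalisation (Theorem~\ref{thm:productid}(1), proved via Lemmas~\ref{lem:pa-shuffle} and~\ref{lem:pa-P} and Proposition~\ref{prop:pa}) uses exactly your base-$k$ digit identification, with $\sigma$ acting as a cyclic shift of the coordinates, each $\rho_\tau$ acting on the leading coordinate, and the full base group recovered by conjugating the $\rho_\tau$ by powers of $\sigma$. Nothing essential is missing from your argument.
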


Depending on the congruences of $k$ and $n$ modulo four, it is easy to decide whether $\sh(\sym(k),n)$ is contained in $\alt(kn)$ or not -- see \cite[Theorem 1]{MM}. Computations by the authors of \cite{MM} for $k=3$ and $k=4$, for small values of $n$, showed that when $n$ is not a power of $3$ or $2$ respectively, the shuffle group $\sh(\sym(k),n)$  is actually equal to the symmetric group $\sym(kn)$, or to $\alt(kn)$ if it is contained there. Based on this evidence, they posed conjectures for the structure of the group when $k=3$ or $k=4$. Later in \cite{CHMW}, the case of $k=4$ and $n=2^{f}$ for $f$ an odd integer was examined further (the case $n=2^{2f}=4^f$ being covered by the above theorem). The main result \cite[Theorem 2.6]{CHMW} shows that  $\sh(\sym(4),2^f)$ is the full affine group of degree $2^{f+2}$  when $f$ is an odd integer. The authors of \cite{CHMW} then made the following conjecture for general $k$, which  could be summarised as saying ``\emph{$\sh(\sym(k),n)$ is as large as possible}''.
%

\begin{conj} \cite[Conjecture 1.1]{CHMW}
\label{conj:CHMW}
Assume that $k \geqslant 3$, that $n$ is not a power of $k$ and if $k=4$ that $n \neq 2^f$ for any $f$. Let $k_{(4)}, n_{(4)} \in [4]$ be the residues of $k$ and $n$ modulo $4$ respectively.
  Then
	\[ \sh(\sym(k),n) = \left\{ \begin{aligned} &\alt(kn) &&\text{if $n_{(4)} = 0$ or $(k_{(4)}, n_{(4)}) \in \{ (0,2), (1,2) \}$} \\ &\sym(kn) &&\text{if $n$ is odd or $(k_{(4)}, n_{(4)}) \in \{ (2,2), (3,2) \}$}. \end{aligned} \right. \]
\end{conj}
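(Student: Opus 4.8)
The plan reduces the whole statement to one containment: that $\sh(\sym(k),n)\geqslant\alt(kn)$ under the stated hypotheses. Granting this, the exact identification follows from the parity criterion of \cite[Theorem 1]{MM}, which decides from the residues of $k$ and $n$ modulo $4$ whether $\sh(\sym(k),n)\leqslant\alt(kn)$: if so, then equality holds; if not, then $\sh(\sym(k),n)$ properly contains the simple group $\alt(kn)$ and hence equals $\sym(kn)$. One checks that this dichotomy coincides with the case division above. To prove the containment I would use the classical two-step scheme: (i) show that $\sh(\sym(k),n)$ is primitive on $[kn]$; and (ii) exhibit an element of $\sh(\sym(k),n)$ of small support --- a $3$-cycle, or a cycle of prime length $p\leqslant kn-3$ --- so that Jordan's theorem forces $\sh(\sym(k),n)\geqslant\alt(kn)$ (where a short element is not readily available, one instead invokes the classification of $2$-transitive groups and discards the affine and almost simple possibilities by order or degree). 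Step~(i) is where the hypothesis must enter essentially: if $n$ is a power of $k$ the group is imprimitive (Theorem~\ref{thm:MMprod}), and if $k=4$, $n=2^f$ it is an affine group \cite[Theorem 2.6]{CHMW}, so any proof must detect exactly where imprimitivity and affineness break down. For this reason I would not attempt a uniform argument but would proceed family by family, the common difficulty being the control of block systems.

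The cleanest family is $k=\ell^e$, $n=\ell^f$ with $\ell^e>4$ and $f$ not a multiple of $e$, the last condition being precisely what makes $n$ not a power of $k$. Here $kn=\ell^m$ with $m=e+f$, and identifying $[kn]$ with $[\ell]^m$ via base-$\ell$ digits one computes that $\sigma$ acts as a cyclic rotation of the $m$ digit-positions by $e$ (equivalently by $f$), while each $\rho_\tau$, $\tau\in\sym(k)$, acts as an arbitrary permutation of the $\ell^e$ tuples carried by the block of $e$ leading positions and fixes the remaining positions. Conjugating the leading-block copy of $\sym(\ell^e)$ by powers of $\sigma$ gives a copy of $\sym(\ell^e)$ on the block of positions $\{1+jd,\dots,e+jd\}$ (indices modulo $m$) for every $j$, where $d=\gcd(e,f)$; since $e\nmid f$ we have $d<e$, so consecutive such blocks overlap in $e-d\geqslant 1$ positions and finitely many of them cover $\{1,\dots,m\}$. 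A generation lemma for overlapping block-symmetric subgroups --- two copies of a full symmetric group acting on blocks of coordinates that share a coordinate generate a primitive group containing a small-support element, hence at least the alternating group on the union of the two blocks --- then lets one climb this chain of blocks and conclude $\sh(\sym(k),n)\geqslant\alt(\ell^m)=\alt(kn)$. The technical heart is this overlapping-blocks lemma: pinning down alternating versus symmetric, and treating the smallest blocks when $\ell=2$ or $e$ is small, which is exactly where the restriction $\ell^e>4$ is needed --- for $\ell^e=4$ the relevant group is $\sym(4)$, whose index-two subgroups are the source of the genuine exceptions.

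The two remaining families call for their own, more \emph{ad hoc} versions of the same argument. When $k>n$ there is no digit structure to exploit; instead one argues directly that the stabiliser in $\sh(\sym(k),n)$ of a card already contains a copy of $\sym(k-1)$ coming from the $\rho_\tau$, and that its orbits on the other cards fuse under conjugation by powers of $\sigma$, giving $2$-transitivity, hence primitivity, after which a short-support element is extracted from products of the $\rho_\tau$ with their $\sigma$-conjugates. When $k=2^e\geqslant 4$ and $n$ is not a power of $2$, one argues by induction on $e$, using the base case $k=2$ (Theorem~\ref{dgk}) together with the subgroup structure of $\sym(2^e)$ to reduce to smaller shuffle groups and again to locate a $3$-cycle inside a primitive group. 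In all three families the decisive obstacle is the same --- ruling out every proper block system, that is, proving primitivity --- after which Jordan's theorem (or the classification of $2$-transitive groups) and the parity computation of \cite[Theorem 1]{MM} finish the proof; a proof valid for all admissible $(k,n)$ would need a uniform handle on primitivity that I do not see how to obtain, which is presumably why the conjecture remains open in general.
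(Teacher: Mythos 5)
First, a point of status: the statement you were asked to prove is Conjecture~1.1 of \cite{CHMW}, and the paper does not prove it --- it establishes the key containment $\alt(kn)\leqslant\sh(\sym(k),n)$ only for three families: $k>n$ (Lemma~\ref{lem:2trans-full}), $k=\ell^e$, $n=\ell^f$ with $e\nmid f$ and $k\neq4$ (Corollary~\ref{cor:productid}), and $k=2^e\geqslant4$ with $n$ not a power of $2$ (Corollary~\ref{cor:k=2^e}). You correctly treat the statement as open, and your reduction is the same as the paper's framework: once the containment is known, Corollary~\ref{cor:parity} (that is, \cite[Theorem 1]{MM}) decides Alt versus Sym, and since $\sym(k)\nleqslant\alt(k)$ the odd-$n$ clause there never puts $\sh(\sym(k),n)$ inside $\alt(kn)$, so the dichotomy matches the conjecture's case division. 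The real comparison is therefore between your sketches for the three families and the paper's actual proofs, and there is a genuine gap in the central one.

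In the power family your argument rests entirely on an ``overlapping-blocks lemma'': two copies of the full $\sym(\ell^e)$, acting on overlapping blocks of $e$ base-$\ell$ digit positions, generate at least the alternating group on the tuples over the union of the blocks. This is unproven and, as stated, false: for $\ell^e=4$ every permutation of a pair of binary digits is affine (since $\sym(4)=\AGL(2,2)$), so two such copies on overlapping digit pairs both lie inside $\AGL(3,2)$ and generate a group of order at most $1344$, nowhere near $\alt(8)$. Hence the exclusion $\ell^e>4$ must be built into the lemma itself, not only into ``the smallest blocks''; and even for $\ell^e>4$ the lemma is where all the difficulty lives: elements of a block copy act diagonally across the digits outside the block, the auxiliary group generated by two block copies is not covered by Theorem~\ref{thm:prim}, and a Jordan/Bochert-type finish would first require its primitivity (or $2$-transitivity) plus classification-based minimal-degree input --- none of which your sketch supplies. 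The paper's route is genuinely different and avoids this: it does not feed all of $\sym(\ell^e)$ into the digit picture, but restricts to the structure-preserving subgroup $P=\sym(\ell)\wr\sym(e)$ (or $\AGL(e,\ell)$), for which the overlap step degenerates to the trivial fact $\langle\sym(\Delta),\sym(\Delta')\rangle=\sym(\Delta\cup\Delta')$ for overlapping sets of coordinate \emph{positions}; it computes $\sh(P,\ell^f)$ exactly (Theorem~\ref{thm:productid}) and then climbs to $\alt(kn)$ via maximality of product-action and affine groups in $\alt(kn)$ or $\sym(kn)$ \cite{LPSon} together with a support-size comparison (proof of Corollary~\ref{cor:productid}).

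Your other two sketches are in the right spirit but omit the content that makes them proofs. For $k>n$ the paper proves $2$-transitivity (first assertion of Theorem~\ref{thm:2trans}) and then applies Bochert's bound \cite{bochert} to $\rho_\tau$ for a transposition $\tau$ (support $2n$), which only reduces matters to $k\leqslant14$, finished by {\sc Magma} \cite{magma}; your unspecified short-support element and these small cases are not addressed. For $k=2^e$ the paper neither inducts on $e$ nor finds a $3$-cycle: it builds the cascade $G_t=\sh(V_t,2^{e-t}n)$ of elementary abelian shuffle groups, identifies them from Theorem~\ref{dgk} (including the sign data of \cite{DGK} and the exceptional cases in Theorem~\ref{thm:k=2^e}), and then combines primitivity (Theorem~\ref{thm:prim}) with elements of cycle type $5^2$ and Wielandt's theorem \cite{wielandt}. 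So your proposal is a reasonable strategic outline, but as it stands it proves none of the three families, and of course not the conjecture itself, whose general case remains open exactly as you say.
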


We were motivated to study the validity of this conjecture.  More generally, we wondered what might be said about the structure of $\sh(P,n)$ for any subgroup $P$ of $\sym(k)$. It is reasonable to guess that Conjecture \ref{conj:CHMW} might be true, as a plethora of results show that it is ``easy'' to generate a subgroup of $\sym(m)$ that contains $\alt(m)$. For example, a result of Dixon \cite{dixon} shows that as $m \rightarrow \infty$, the probability that a randomly chosen pair of elements of $\sym(m)$ generate a subgroup that contains $\alt(m)$ tends to $1$.  On the other hand, the groups that we consider here are far from randomly generated. In the next section we describe our progress.

\subsection{Main results}
 
To frame our first main result, we briefly discuss \emph{structures preserved by permutation groups}. For a prime $p$, the set $[p^e]$ is in bijection with the vector space $ \mathbb F_p^e$. This gives an embedding of $\AGL(e,p) = C_p^e \rtimes \GL(e,p)$, the group of affine transformations of $ \mathbb F_p^e$, into $\sym(p^e)$. We say that a  permutation group $P\leqslant \sym(p^e)$ \emph{preserves an affine structure} on $[p^e]$ if $P$ is   a subgroup of $\AGL(e,p)$ (relative to some some bijection $[p^e]\rightarrow \mathbb{F}_p^e$). For integers $\ell$ and $e$, the set $[\ell^e]$ is in bijection with the cartesian product $[\ell]^e$. On the latter set, the wreath product $\sym(\ell) \wr \sym(e)$ acts in \emph{product action}: if $(g_1,\dots,g_e)\in \sym(\ell)^e$ then $(g_1,\dots,g_e) : (\omega_1,\dots,\omega_e) \mapsto (\omega_1^{g_1},\dots,\omega_e^{g_e})$ and if $\sigma \in \sym(e)$ then $
\sigma : (\omega_1,\dots,\omega_e) \mapsto (\omega_{1^{\sigma^{-1}}},\dots,\omega_{e^{\sigma^{-1}}}).$ As in the affine case, we say that a permutation group $P \leqslant \sym(\ell^e)$ \emph{preserves a product structure} on $[\ell^e]$ if $P$ is   a subgroup $\sym(\ell) \wr \sym(e)$ acting in product action (for some bijection). There is also  the \emph{imprimitive action} of a wreath product $\sym(\ell) \wr \sym(e)$ on the set $[\ell] \times [e]$ where $(g_1,\dots,g_e)\sigma : (a,b) \mapsto (a^{g_b},b^\sigma)$. This action is imprimitive whenever both $\ell > 1$ and $e>1$.
 
 Theorem~\ref{thm:MMprod} implies that $\sh(\sym(k),k^f)$ preserves a product structure on $[k^{f+1}]$, and \cite[Theorem 2.6]{CHMW} shows that $\sh(\sym(4),2^f)$ preserves an affine structure on $[2^{f+2}]$.   Since $\sym(4)= \mathrm{AGL}(2,2)$ is an affine group, and we can regard $\sym(k)$ as preserving the ``trivial'' product structure  $[k]^1$,  the aforementioned results may be interpreted as saying that, if $P$ preserves a structure on $[k]$, then for appropriate $n$ the shuffle group $\sh(P,n)$ also preserves such a structure on $[kn]$.  In our first result, Theorem~\ref{thm:productid}, we show that this is the case in general.

\begin{thm} \label{thm:productid}
Let $\ell$, $e$ and $f$ be positive integers with  $\ell \geqslant 2$. The following hold:
	\begin{enumerate}
	\item If $e \mid f$, then $\sh(P,\ell^f) = P \wr C_{1+f/e}$, for any $P \leqslant \sym(\ell^e)$.
	\item \label{pa} If $e\nmid f$ and  $P = \sym(\ell) \wr \sym(e)$ acts in product action on $[\ell]^{e}$, then $\sh(P, \ell^f)$ acts in product action on $[\ell^{e+f}]$ and $ \sh(P, \ell^f) =\sym(\ell) \wr \sym(e+f)$.
	\item If $e\nmid f$, $\ell$ is prime and  $P=\AGL(e,\ell)$, then $\sh( P, \ell^f) =  \AGL(e+f,\ell)$.
	\end{enumerate}
\end{thm}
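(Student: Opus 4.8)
The plan is to prove all three parts together as much as possible, since they share a common engine: understanding the standard shuffle $\sigma$ on $[\ell^{e+f}]$ together with the pile permutations $\rho_\tau$ for $\tau\in P\leqslant\sym(\ell^e)$. First I would set up coordinates. Write $kn = \ell^{e+f}$ with $k=\ell^e$, $n=\ell^f$, and identify $[kn]$ with $[\ell]^{e+f}$ via base-$\ell$ digits, so that a card is a string $(a_1,\dots,a_{e+f})$; I would arrange that the first $e$ digits (or the last $e$ digits — a choice to be fixed carefully) encode the pile in $[\ell^e]=[k]$ and the remaining $f$ digits encode the position within the pile. The key computation is that the standard shuffle $\sigma$ acts on digit strings as a cyclic rotation: a card in pile $p$ at position $q$ goes to position $q\cdot k + p$ in $[kn]$ (or the analogous formula), which in base-$\ell$ digits is exactly a cyclic shift of the $(e+f)$-tuple. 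This is the heart of Theorem~\ref{thm:MMprod} and I would reprove/record it in this coordinate language. Simultaneously, each $\rho_\tau$ for $\tau\in P$ acts only on the $e$ "pile" digits, according to the action of $\tau$ on $[\ell^e]$, leaving the other $f$ digits fixed.

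For part (1), when $e\mid f$, write $f/e = m$ so $e+f = e(m+1)$. Group the $e+f$ digits into $m+1$ blocks of $e$ digits each; then $\sigma$ permutes these blocks cyclically (it is an $e$-fold iterate of the single-digit rotation that cyclically shifts the blocks) and $P$ acts on one distinguished block. Hence $\sh(P,\ell^f)=\langle \sigma,\rho_\tau:\tau\in P\rangle$ is visibly contained in $P\wr C_{m+1}$ in its imprimitive-type action on $[\ell^e]\times[m+1]$ (more precisely on $([\ell]^e)^{m+1}$); and conjugating $P$ by powers of $\sigma$ moves its action to every block, so together with the cyclic shift we generate the full wreath product $P\wr C_{m+1}$. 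This part is essentially bookkeeping once the rotation picture is in place.

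For parts (2) and (3), $e\nmid f$, the strategy is: the "obvious" overgroup is now $\sym(\ell)\wr\sym(e+f)$ in product action (resp.\ $\AGL(e+f,\ell)$ for $\ell$ prime), and one must show the shuffle group is all of it. The containment direction is again coordinate bookkeeping: $\sigma$ is a coordinate permutation and each $\rho_\tau$ for $\tau\in P\leqslant\sym(\ell)\wr\sym(e)$ (resp.\ $\AGL(e,\ell)$) extends to an element of the big group acting on the first $e$ coordinates, so $\sh(P,\ell^f)\leqslant\sym(\ell)\wr\sym(e+f)$ (resp.\ $\leqslant\AGL(e+f,\ell)$). For the reverse, I would conjugate: since $\sigma$ cyclically permutes all $e+f$ coordinates, $\sigma^{-i}\rho_\tau\sigma^{i}$ gives a copy of $P$ acting on coordinates $i+1,\dots,i+e$ (indices mod $e+f$), and as $i$ ranges over $[e+f]$ these "windows" of length $e$ cover all coordinates and overlap. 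In the product-action case (2) I would use that $\sym(\ell)\wr\sym(e)$ already contains the base group $\sym(\ell)^e$ acting on the first $e$ coordinates; overlapping windows of such base groups generate $\sym(\ell)^{e+f}$, and overlapping windows of the top $\sym(e)$-factors generate $\sym(e+f)$ — here I would invoke the standard fact that transpositions $(i,i{+}1)$ generate the symmetric group, obtained from the windowed copies of $\sym(e)$ since $e\geqslant2$ whenever this case is nonvacuous (if $e=1$ then $P=\sym(\ell)$ and we are in Theorem~\ref{thm:MMprod}'s regime, so really $e\geqslant 2$ here unless $f$... — I should handle $e=1$ separately, where part (2) with $P=\sym(\ell)$ and $1\nmid f$ is vacuous, so $e\geqslant 2$ is automatic). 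For case (3), $\AGL(e,\ell)$ contains the translation subgroup $C_\ell^e$ and $\GL(e,\ell)$; windowed translations generate $C_\ell^{e+f}$, and windowed copies of $\GL(e,\ell)$ (each acting on $e$ overlapping coordinates, with $e\geqslant 2$ so each contains $\SL$-type transvections mixing two adjacent coordinates) generate $\GL(e+f,\ell)$ — again the mechanism is that transvections $x\mapsto x+\lambda\langle x,e_i^*\rangle e_j$ for adjacent $i,j$ generate $\SL(e+f,\ell)$, and picking up all diagonal scalings from one window gives the rest of $\GL$. Then $\langle C_\ell^{e+f},\GL(e+f,\ell)\rangle=\AGL(e+f,\ell)$.

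The main obstacle I expect is the generation lemma in part (3): verifying cleanly that overlapping length-$e$ "windows" of copies of $\GL(e,\ell)$, as conjugated around by the $(e+f)$-cycle $\sigma$, generate the full $\GL(e+f,\ell)$. Getting \emph{all} transvections requires that adjacent windows overlap in at least one coordinate (true since $e\geqslant 2$) so that a transvection moving coordinate $i$ to coordinate $i+1$ is available for every $i$, and then a short argument that these adjacent transvections together with one window's worth of scalars generate $\GL(e+f,\ell)$; one also needs to confirm that the windowed affine maps genuinely lie inside the global $\AGL$ with respect to the \emph{same} $\mathbb F_\ell$-structure, i.e.\ that the base-$\ell$ digit identification is $\mathbb F_\ell$-linear in the right way, which is where the earlier affine/product-structure bookkeeping must be airtight. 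A secondary subtlety is the exact placement of pile-digits versus position-digits and whether $\sigma$ shifts left or right; choosing this consistently so that $P$ genuinely sits on a \emph{contiguous} block of $e$ coordinates (rather than $e$ scattered ones) is essential for the windowing argument, and I would pin it down at the very start.
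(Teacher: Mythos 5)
Your proposal follows essentially the same route as the paper: the base-$\ell$ digit identification of $[\ell^{e+f}]$ with $[\ell]^{e+f}$ is the paper's \eqref{eq:base-l}--\eqref{eq:product-id}, the computation that $\sigma$ rotates the digit string and that $\rho_\tau$ acts only on the $e$ pile-digits is Lemmas~\ref{lem:pa-shuffle} and~\ref{lem:pa-P}, the block argument for $e\mid f$ and the overlapping-window generation for $e\nmid f$ (via $\langle\sym(\Delta),\sym(\Delta')\rangle=\sym(\Delta\cup\Delta')$ for intersecting $\Delta,\Delta'$, and its $\GL$ analogue for coordinate subspaces with nonzero intersection) are exactly Propositions~\ref{prop:pa} and~\ref{prop:affine}; your treatment of part (1) for arbitrary $P$ is in fact closer to the theorem as stated than the paper's one-line deduction. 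One step of your write-up is misstated, though the argument is repairable: $\sigma$ does not cyclically permute the coordinates one step at a time, it shifts them by $e$ places (Lemma~\ref{lem:pa-shuffle}), so it is an $(e+f)$-cycle on positions only when $\gcd(e,f)=1$, and the conjugates $\sigma^{-i}P^\rho\sigma^{i}$ give windows starting only at the offsets $ie \bmod (e+f)$, i.e.\ at the multiples of $d:=\gcd(e,f)$, not ``as $i$ ranges over $[e+f]$''. Your conclusion survives because $e\nmid f$ forces $d$ to be a proper divisor of $e$, so $d\leqslant e/2\leqslant e-1$: consecutive available windows of length $e$ overlap, their union is all of $[e+f]$, and every cyclically adjacent pair of coordinates lies in some window, which is what you need to harvest all adjacent transpositions (resp.\ transvections and coordinate translations). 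This is precisely the bookkeeping the paper carries out by writing $e+f=pe+q$ with $1\leqslant q\leqslant e-1$ and checking which $\Delta_r$ (resp.\ $W_r$, $W_r'$) intersect, so you should replace your ``every offset'' claim by that $\gcd$ analysis; with that correction, and your already-flagged verifications that the pile digits form a contiguous block and that the digit map is $\mathbb F_\ell$-linear in the affine case, the proof is complete and matches the paper's.
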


Theorem~\ref{thm:productid} is proved in Section~\ref{sec:productid} where we also make precise statements about  the  structure of $\sh(P, \ell^f)$ for $P\leqslant \sym(\ell) \wr \sym(e)$ and for $P \leqslant \AGL(e,\ell)$. We have the following application of Theorem~\ref{thm:productid} which establishes Conjecture~\ref{conj:CHMW} for a doubly infinite family of integers.

\begin{cor}
\label{cor:productid}
Suppose that $k=\ell^e$ and $n=\ell^f$ for some positive integers $\ell$, $e$ and $f$ with $\ell \geqslant 2$. If $f$ is not a multiple of $e$ and $k\neq 4$, then $\sh(\sym(k),n) = \alt(kn)$ if $\ell$ is even and $\sh(\sym(k),n) = \sym(kn)$ if $\ell$ is odd.
\end{cor}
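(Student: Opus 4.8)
The plan is to sandwich $G:=\sh(\sym(k),n)$ between a known maximal subgroup of $\sym(kn)$ (or $\alt(kn)$) and $\sym(kn)$ itself, using Theorem~\ref{thm:productid} to supply the maximal subgroup. Note first that $e\nmid f$ forces $e\geqslant 2$, and together with $k\neq 4$ this gives $e+f\geqslant 3$; write $kn=\ell^{e+f}$. I would deal with the alternating-versus-symmetric dichotomy at the outset. For a transposition $\tau=(a\,b)$ of two piles, $\rho_\tau\in G$ is a product of $n$ disjoint transpositions of $[kn]$, hence an odd permutation precisely when $n=\ell^f$ is odd; so $G\not\leqslant\alt(kn)$ when $\ell$ is odd. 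When $\ell$ is even, every $\rho_\tau$ is even, and --- identifying $[kn]$ with $[\ell]^{e+f}$ as in the proof of Theorem~\ref{thm:productid}\,\eqref{pa}, under which $\sigma$ becomes a cyclic permutation of the $e+f$ coordinates --- the inequality $e+f\geqslant 3$ makes every coordinate transposition an even permutation of $[\ell]^{e+f}$, so $\sigma$ is even and $G\leqslant\alt(kn)$ (alternatively this follows from \cite[Theorem~1]{MM}). It therefore suffices to prove $G\supseteq\alt(kn)$.

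The key input is a large structured subgroup of $G$. Since $\sym(k)=\sym(\ell^e)$ contains $\sym(\ell)\wr\sym(e)$ acting in product action on $[\ell]^e$, and $e\nmid f$, part~\eqref{pa} of Theorem~\ref{thm:productid} gives $H:=\sym(\ell)\wr\sym(e+f)=\sh(\sym(\ell)\wr\sym(e),\ell^f)\leqslant G$, in product action on $[\ell^{e+f}]=[kn]$. When $\ell=p^t$ is a prime power --- in particular when $\ell\in\{2,3,4\}$ --- I would instead use part~(3): $\AGL(te,p)\leqslant\sym(p^{te})=\sym(k)$ and $te\nmid tf$, so $H:=\AGL(t(e+f),p)=\sh(\AGL(te,p),p^{tf})\leqslant G$. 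In either case $H$ is primitive (indeed $2$-transitive in the affine case), so $G$ is primitive; and, crucially, $H$ is a maximal subgroup of $\sym(kn)$ or of $\alt(kn)$. For $\ell\geqslant 5$ this is the product-action entry in the classification of maximal subgroups of the finite symmetric and alternating groups; for $\ell\in\{2,3,4\}$ the subgroup $H=\AGL(t(e+f),p)$ has dimension $t(e+f)\geqslant 3$, and since any overgroup of it is $2$-transitive, the classification of finite $2$-transitive groups leaves only $H$, $\alt(kn)$ and $\sym(kn)$ (the would-be small exceptions live only in degrees $8$ and $9$, excluded here by $e+f\geqslant 3$).

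I would then check that $H$ is a proper subgroup of $G$. The subgroup $N:=\langle\rho_\tau:\tau\in\sym(k)\rangle\cong\sym(\ell^e)$ of $G$ contains, as noted above, an element whose support on $[kn]$ has size exactly $2n=2\ell^f$. On the other hand, because $e\geqslant 2$ and $k\neq 4$, every non-identity element of $H$ moves strictly more than $2\ell^f$ points: in the product case a non-identity base-group element moves at least $2\ell^{e+f-1}$ points and every other non-identity element at least $\ell^{e+f-1}(\ell-1)$ points; in the affine case a non-identity translation moves all $p^{t(e+f)}$ points and every other non-identity element at least $p^{t(e+f)-1}(p-1)$ points. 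Hence $N\not\leqslant H$, so $H\lneqq G$. Finally, set $M:=\alt(kn)$ if $\ell$ is even and $M:=\sym(kn)$ if $\ell$ is odd; the parity computation of the first paragraph shows $H\leqslant M$, and also $G\leqslant M$ when $\ell$ is even. Since $H$ is maximal in $M$ while $H\lneqq G\leqslant M$, we conclude $G=M$, which is exactly the assertion of the corollary.

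The step I expect to be the real obstacle is establishing the maximality of $H$. The product-action wreath product $\sym(\ell)\wr\sym(e+f)$ is maximal in $\sym$ or $\alt$ of degree $\ell^{e+f}$ only for $\ell\geqslant 5$ --- when $\ell\in\{2,3,4\}$ it sits inside an affine group --- which is precisely why the argument must switch to the affine subgroup of Theorem~\ref{thm:productid}(3) in the small cases; pinning down that no genuinely exceptional degree survives (here the hypotheses $k\neq 4$ and $e\geqslant 2$ earn their keep, via $e+f\geqslant 3$) takes some care, and any residual small configurations could if necessary be dispatched by direct computation.
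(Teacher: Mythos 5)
Your proposal is correct and follows essentially the same route as the paper's proof: both sandwich $G=\sh(\sym(k),n)$ between the subgroup $H$ supplied by Theorem~\ref{thm:productid} (the product-action wreath product for large $\ell$, the full affine group for small prime-power $\ell$), and $\alt(kn)$ or $\sym(kn)$, invoke maximality of $H$ (the paper cites Liebeck--Praeger--Saxl; you re-derive the affine case from the classification of $2$-transitive groups), rule out $H=G$ by comparing the support $2n$ of $\rho_\tau$ for a pile transposition with the minimal degree of $H$, and settle alternating versus symmetric by the parity facts of Corollary~\ref{cor:parity}. The only deviations are cosmetic --- the paper first reduces to $\ell$ not a proper power and treats $\ell\in\{2,3\}$ affinely whereas you treat every prime power (including $\ell=4$) affinely, and your parenthetical crediting the exclusion of degree $8$ to $e+f\geqslant 3$ should really credit $k\neq 4$, which is what forces the affine dimension to be at least $3$ when $p=2$ --- and none of these affects correctness.
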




We note  that Corollary~\ref{cor:productid} with $(k,n) = (9,3)$ is proved in \cite[Theorem 3.6]{CHMW}, using different methods.
In our goal of proving Conjecture \ref{conj:CHMW}, we are motivated to consider properties of  $\sh(P,n)$ for $P \leqslant \sym(k)$ that \emph{should} hold if the conjecture is valid. The most basic concerns transitivity, and is established in Lemma~\ref{lemma:transitive1}. A more important notion is that of primitivity, which we address below.

\begin{thm} \label{thm:prim}
If $P$ is primitive and non-regular on $[k]$, then $\sh(P,n)$ is primitive on $ [kn]$.
Moreover, $\sh(\sym(k),n)$ is primitive on $[kn]$ if and only if $k\geqslant3$.
\end{thm}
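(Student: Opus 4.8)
The plan is to establish primitivity by using the standard criterion: a transitive permutation group is primitive if and only if every nontrivial block system is trivial, equivalently if and only if for some (hence every) point stabiliser there is no overgroup strictly between it and the whole group. Since transitivity of $\sh(P,n)$ (at least in the cases we need) is available from Lemma~\ref{lemma:transitive1}, I would work with block systems directly. So suppose $\mathcal B$ is a $\sh(P,n)$-invariant partition of $[kn]$ with blocks of size $b$, where $1 < b < kn$, and aim for a contradiction. The key structural input is that $\sh(P,n)$ contains both $\sigma$ (the ``stacking'' shuffle, which on $[kn]$ is essentially multiplication by $k$ modulo $kn-1$ fixing the top card, or some explicit near-$k$-cycle description) and all the pile-permuting elements $\rho_\tau$ for $\tau\in P$. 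The first step is to write down $\sigma$ and $\rho_\tau$ explicitly on $[kn]$ and record their cycle structure; in particular $\sigma$ has a long cycle (of length roughly $kn$ or a large divisor), which already forces $b$ to be fairly constrained since $b$ must be compatible with the cycle lengths of $\sigma$.

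Next I would exploit the hypothesis that $P$ is primitive and non-regular on $[k]$. The elements $\rho_\tau$ preserve the ``pile partition'' $\mathcal P$ of $[kn]$ into $k$ parts of size $n$ (the $i$-th part being $\{in, in+1,\dots, in+n-1\}$), and they act on $\mathcal P$ as $P$ acts on $[k]$. A $\sh(P,n)$-invariant block system $\mathcal B$ must interact with $\mathcal P$: either $\mathcal B$ refines $\mathcal P$, or $\mathcal P$ refines $\mathcal B$, or they are ``skew''. Because $P$ is primitive on the piles, the block system that $\langle\rho_\tau\rangle$ induces on $[k]$ is trivial, which rules out $\mathcal B$ being a union of piles with $1<|\mathcal B|<k$; and non-regularity of $P$ gives a nontrivial point stabiliser $P_0$ on $[k]$, whose corresponding elements $\rho_\tau$ (for $\tau\in P_0$) fix the $0$-th pile setwise while moving other piles, and this can be used to ``transport'' information about the internal structure of $\mathcal B$ from one pile to another. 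The idea is to show that $\mathcal B$ restricted to a single pile of size $n$ must itself be $\sigma^k$-invariant (or invariant under some natural cyclic action on the pile), and then combine this with how $\sigma$ mixes the piles to deduce $b\mid n$ or $n\mid b$; finally a direct argument using a non-identity element of $P_0$ together with $\sigma$ generates enough mixing to collapse $\mathcal B$ to a trivial partition.

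For the ``moreover'' statement I would argue both directions. If $k\geqslant 3$, then $\sym(k)$ is primitive and non-regular on $[k]$ (the only primitive regular groups of degree $k$ are of prime degree with $P$ cyclic, and $\sym(k)$ is not cyclic for $k\geqslant3$), so the first part applies and $\sh(\sym(k),n)$ is primitive. Conversely if $k=2$, then $P=\sym(2)$ is regular on $[2]$, and $\sh(\sym(2),n)$ is the classical shuffle group of Theorem~\ref{dgk}, which as noted after that theorem is always imprimitive — it preserves the central-symmetry partition into $n$ pairs — so $\sh(\sym(2),n)$ is not primitive. (One should check $k=1$ is vacuous or excluded.) This disposes of the equivalence.

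The main obstacle I anticipate is the heart of the first part: showing that an invariant block system $\mathcal B$ cannot be ``skew'' to the pile partition $\mathcal P$, i.e.\ cannot cut across piles in a way incompatible with both refinements. Handling this requires a careful analysis of the explicit permutation $\sigma$ — understanding precisely which residues modulo $kn$ lie in a common cycle and how $\sigma$ maps the $i$-th pile into the others — and then showing that any $\mathcal B$ invariant under $\sigma$ and all $\rho_\tau$ is forced to be either $\mathcal P$ itself or its refinement/coarsening, at which point primitivity of $P$ on $[k]$ finishes the job. I expect this to reduce to an elementary but somewhat delicate number-theoretic/combinatorial computation with the map $x\mapsto kx \bmod (kn-1)$, analogous to (but more involved than) the arguments in \cite{DGK} and \cite{MM}; the non-regularity hypothesis on $P$ is exactly what is needed to rule out the sporadic imprimitive behaviour that occurs for $P$ regular (as in the $k=2$ case).
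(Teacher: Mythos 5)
The ``moreover'' part of your plan is fine and matches the paper's reduction: for $k\geqslant 3$ the group $\sym(k)$ is primitive and non-regular, so the first assertion applies, while for $k=2$ imprimitivity of $\sh(\sym(2),n)$ follows from Theorem~\ref{dgk}. The problem is that the first assertion itself --- that no nontrivial block exists when $P$ is primitive and non-regular --- is never actually proved: you describe what you expect the argument to look like and explicitly defer ``the heart of the first part''. Moreover, the steps you do sketch would not go through as stated. The pile partition $\{\mathcal{C}_a \mid a\in[k]\}$ is not preserved by $\sigma$, hence is not a block system for $\sh(P,n)$, so the trichotomy ``$\mathcal B$ refines $\mathcal P$, $\mathcal P$ refines $\mathcal B$, or they are skew'' gives no structural leverage: there is no invariant-partition lattice to compare inside, and all configurations must be excluded by hand using the explicit action of $\sigma$. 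Likewise, the assertion that $\mathcal B$ restricted to a pile is $\sigma^k$-invariant is unsupported ($\sigma^k$ does not stabilise a pile), the deduction ``$b\mid n$ or $n\mid b$'' is unjustified, and the hoped-for constraint from the cycle structure of $\sigma$ (multiplication by $k$ modulo $kn-1$) is weak, since its cycle lengths depend on the multiplicative order of $k$ modulo divisors of $kn-1$ and do not directly constrain block sizes.

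For comparison, the paper's proof fixes the block $B$ containing $0$ (so $\sigma$, which fixes $0$, stabilises $B$ setwise) and runs three concrete claims. First, if some $\rho_\tau$ with $0^\tau\neq 0$ stabilises $B$, then the connectedness of an orbital digraph of $P$ --- this is exactly where primitivity of $P$ enters --- forces the whole row $\mathcal{R}_b$ into $B$ for every $b\in B\cap\mathcal{C}_0$. Second, combining this with repeated applications of $\sigma$ (which maps $\mathcal{R}_b$ to the interval $\{bk,\dots,bk+k-1\}$) blows $B$ up through $\{0,\dots,k^r-1\}$ for increasing $r$ until $B=[kn]$, a contradiction; hence $B\cap B^{\rho_\tau}=\varnothing$ whenever $0^\tau\neq0$. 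Third, non-regularity supplies, for each $a\neq 0$, an element $\tau\in P_a$ moving $0$ (the fixed-point set of $P_a$ is just $\{a\}$), and since $\rho_\tau$ must move all of $B$ while its support avoids $\mathcal{C}_a$, one gets $B\subseteq\mathcal{C}_0$; finally the maximal element $b$ of $B$ satisfies $b^\sigma=bk>b$ with $bk\in B$, contradicting maximality. Some mechanism of this concreteness --- converting primitivity (via orbital connectivity) and non-regularity (via supports of $\rho_\tau$ for $\tau$ in a point stabiliser) into an actual contradiction --- is precisely what is missing from your proposal.
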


The non-regularity assumption for $P$ cannot be removed in general, since  $P=\sym(2)$ is primitive and regular on two points and Theorem~\ref{dgk} implies that, for all $n$, the group $\sh(\sym(2),n)$ is imprimitive. Similarly the primitivity assumption on $P$ cannot be relaxed, since if $P$ is an imprimitive group of degree $k$, then Theorem~\ref{thm:productid} shows that  $\sh(P,k^f) = P \wr C_{f+1}$   in product action on $[k]^{f+1}$  -- and this action is imprimitive whether $P$ is non-regular or not. Hence both parts of the hypothesis are necessary.  We note also that, if $P=C_k$ for an odd prime $k$, so that $P$ is primitive and regular on $[k]$, then if $n=k^f$, Theorem~\ref{thm:productid} implies that $\sh(C_k,n)\leqslant \sh(\AGL(1,k),n)=\AGL(1,k)\wr C_{1+f}$. Now $\AGL(1,k)\wr C_{1+f}$ is imprimitive on $[kn]$ since the diagonal subgroup of $C_k^{1+f}$ is an intransitive normal subgroup, and therefore also $\sh(C_k,n)$ is imprimitive in this case. However we have verified computationally that, if $k\leqslant 13$, $k<n\leqslant 1000$,
and $n$ is not a power of $k$, then $\sh(C_k,n)$ contains $\alt(kn)$ (see Section~\ref{sec: summary}). 

\begin{conj}\label{conj1}
If $k$ is an odd prime, $n>k$, and $n$ is not a power of $k$, then $\sh(C_k,n)$ contains $\alt(kn)$.
\end{conj}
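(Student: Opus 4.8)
We briefly indicate how one might attempt Conjecture~\ref{conj1}. The natural line of attack is the classical one for forcing a permutation group to contain the alternating group: first prove that $\sh(C_k,n)$ is primitive on $[kn]$, and then locate in it an element of small support, so that a theorem of Jordan -- a primitive subgroup of $\sym(m)$ containing a cycle of prime length at most $m-3$ contains $\alt(m)$ -- or one of its modern refinements for primitive groups containing an element of bounded support, finishes the job. Throughout one would write $C_k=\grp{\tau}$ with $\tau$ a $k$-cycle on $[k]$, so that $\sh(C_k,n)=\grp{\sigma,\rho_\tau}$ is $2$-generated: the standard shuffle $\sigma$ acts on $\{0,1,\dots,kn-2\}$ as $x\mapsto kx\bmod(kn-1)$ and fixes $kn-1$, while $\rho_\tau\colon in+r\mapsto((i+1)\bmod k)\,n+r$ is a fixed-point-free element of prime order $k$ (a product of $n$ disjoint $k$-cycles).

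Transitivity of $\sh(C_k,n)$ on $[kn]$ is immediate from Lemma~\ref{lemma:transitive1}, since $C_k$ is transitive on $[k]$. The decisive and hardest step is primitivity. Theorem~\ref{thm:prim} is of no help here because $C_k$ is regular, and the hypothesis that $n$ is not a power of $k$ is genuinely needed: by Theorem~\ref{thm:productid}(1) one has $\sh(C_k,k^f)\leqslant\AGL(1,k)\wr C_{1+f}$, which is imprimitive on $[k^{f+1}]$. So I would argue by contradiction. If $\mathcal B$ is a non-trivial block system with block size $b$ satisfying $1<b<kn$ and $b\mid kn$, then the block $B$ containing $0$ is fixed setwise by $\sigma$ and hence is a union of $\sigma$-orbits; the lengths of those orbits are controlled by the multiplicative orders of $k$ modulo the divisors of $kn-1$, and $\gcd(k,kn-1)=1$. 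On the other hand $\rho_\tau$ has prime order $k$, so it either stabilises $\mathcal B$ block by block or permutes its blocks in $k$-cycles. Playing these two structural constraints off against each other -- a genuinely number-theoretic analysis organised around the divisors of $kn-1$ and the orders of $k$ modulo them -- should force $n$ to be a power of $k$, the required contradiction. I expect this to be the main obstacle. As a bonus it settles one O'Nan--Scott case for free: an affine primitive group has prime-power degree, and $kn$ is a prime power with $k$ an odd prime only when $n$ is a power of $k$.

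Granting primitivity, the remaining task is to exhibit an element of small support in $\sh(C_k,n)$. Two plausible constructions present themselves: a power $\sigma^{m/p}$, where $m$ is the order of $\sigma$ and $p$ a prime divisor of $m$ chosen so that this power collapses every $\sigma$-cycle of length coprime to $p$ and leaves only a single $p$-cycle; or a short word in $\sigma$ and its conjugates $\rho_\tau^{-i}\sigma\rho_\tau^{i}$ engineered so that most of the motion cancels. The target is an element that is a cycle of prime length $p\leqslant kn-3$, or, more robustly, one whose support is bounded by an absolute constant times $\sqrt{kn}$ while fixing at least three points; then Jordan's theorem, respectively the classification of primitive groups with an element of small support, delivers $\alt(kn)\leqslant\sh(C_k,n)$. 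Making such an element behave uniformly in $n$, rather than through an unbounded case analysis, is the secondary difficulty, and is presumably where the precise shape of the hypotheses ($n>k$ and $n$ not a power of $k$) must re-enter.

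Finally, the finitely many pairs $(k,n)$ not reached by such an asymptotic argument -- in particular those with $k$ and $n$ small -- would be handled by direct computation, extending the verification already carried out for $k\leqslant13$ and $k<n\leqslant1000$ recorded in Section~\ref{sec: summary}.
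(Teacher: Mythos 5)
There is a genuine gap here, and it is worth being explicit about its nature: the statement you were asked to prove is stated in the paper as Conjecture~\ref{conj1}, i.e.\ it is \emph{open}; the paper offers no proof, only computational verification for $k\leqslant 13$ and $k<n\leqslant 1000$. Your submission is likewise not a proof but a programme, and both of its pivotal steps are left as hopes rather than arguments. First, primitivity: you correctly note that Theorem~\ref{thm:prim} does not apply because $C_k$ is regular, but your replacement --- ``playing these two structural constraints off against each other \dots should force $n$ to be a power of $k$'' --- is precisely the hard number-theoretic problem, and nothing in your sketch engages with it. The block $B$ containing $0$ is indeed $\sigma$-invariant and a union of $\sigma$-orbits, and $\rho_\tau$ either fixes or moves blocks in $k$-cycles, but these constraints are very weak on their own: the cycle structure of $\sigma$ depends on the multiplicative orders of $k$ modulo the divisors of $kn-1$, which vary erratically with $n$, and no mechanism is given that converts ``a nontrivial $\sigma$-invariant block exists'' into ``$n$ is a power of $k$''. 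The paper itself leaves even the primitivity of $\sh(C_k,n)$ unresolved for regular $P$, which is exactly why the non-regularity hypothesis appears in Theorem~\ref{thm:prim}.

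Second, the small-support element needed for a Jordan-type conclusion is not exhibited, and the two constructions you float do not obviously produce one. The generators themselves are useless for this: $\rho_\tau$ is fixed-point-free of support $kn$, and $\sigma$ moves up to $kn-2$ points. Powers of $\sigma$ can fail completely: if $kn-1$ is prime and $k$ is a primitive root modulo $kn-1$, then $\sigma$ is a single $(kn-2)$-cycle together with two fixed points, and every nontrivial power of $\sigma$ still has support $kn-2$, so no power has small support, let alone support $O(\sqrt{kn})$ as required by Bochert/Babai-type bounds or a prime cycle of length at most $kn-3$ as required by Jordan. The alternative --- a ``short word in $\sigma$ and its $\rho_\tau$-conjugates engineered so that most of the motion cancels'' --- is exactly the uniform-in-$n$ construction whose absence is the obstruction, and you give no candidate word. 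Finally, the closing appeal to ``direct computation for the finitely many remaining pairs'' presupposes an asymptotic theorem that would leave only finitely many exceptions; since neither main step is established, there is no such finite residual set, and the existing computations (only $k\leqslant 13$, $n\leqslant 1000$) cannot close the argument. In short: your outline identifies reasonable lines of attack, but as it stands it proves nothing beyond what the paper already records as evidence for the conjecture.
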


If Conjecture~\ref{conj1} is true then for $n\neq k^f$ we would have a stronger form of Theorem~\ref{thm:prim}, but even our current 
Theorem~\ref{thm:prim} has a number of applications in this work, notably in Section~\ref{sec:cascading}.

The next property we sought to explore is that of 2-transitivity. In part (1) of the theorem below we see that Conjecture~\ref{conj:CHMW} holds whenever $k>n$, that is, when there are more piles than cards in a pile.   A group $G$ is almost simple if there is a nonabelian finite  simple group  $T$ such that $G$ is isomorphic to a subgroup of $\aut(T)$ containing the inner automorphism group of $T$. A classical result of Burnside says that 2-transitive groups are either affine or almost simple, \cite[Theorem 4.1B]{dixmort}.  The latter have been classified using the Classification of the Finite Simple Groups; we have reproduced a list of them in Section~\ref{sec:2trans}.

\begin{thm} \label{thm:2trans}
Suppose that $k>n \geqslant 2$ and that $P\leqslant \sym(k)$ is $2$-transitive. Let $k_{(4)}, n_{(4)} \in [4]$ be such that $k \equiv k_{(4)} \pmod{4}$ and $n \equiv n_{(4)} \pmod{4}$. Then $\sh(P,n)$ is $2$-transitive and the following hold:
	\begin{enumerate}
	\item \label{2tr-alt} If $P$ is $\alt(k)$ or $\sym(k)$ then either $(k,n) = (4,2)$ and $\sh(P,n) = \AGL(3,2)$,  or $(k,n) \neq (4,2)$ and $\sh(P,n) = \alt(kn)$ or $\sym(kn)$. (Details in Lemma $\ref{lem:2trans-full}$.) 
	\item \label{2tr-P-AS} If $P$ is almost simple, then also $\sh(P,n)$ is almost simple.
	\item If $P$ is affine with $k = p^e\geqslant3$ for some prime $p$ and positive integer $e$, then either $n = p^f$ for some positive integer $f$ and $\sh(P,n)$ is affine, or $n \neq p^f$ for any $f$ and $\sh(P,n)=\alt(kn)$ or $\sym(kn)$.
	\end{enumerate}
\end{thm}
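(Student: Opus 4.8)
The plan is to prove first that $\sh(P,n)$ is $2$-transitive, and then to use Burnside's theorem together with the classification of finite $2$-transitive groups recorded in Section~\ref{sec:2trans} to pin down the group in each case. For the $2$-transitivity, I would identify each card with a pair $(t,j)$ where $t\in[k]$ is the pile and $j\in[n]$ the position within the pile, so that $\rho_\tau\colon(t,j)\mapsto(t^\tau,j)$, while $\sigma$ fixes the cards $0$ and $kn-1$ and acts on the remaining labels $1,\dots,kn-2$ as multiplication by $k$ modulo $kn-1$. In particular $\sigma$ fixes the card $0$, so the stabiliser $\sh(P,n)_0$ contains $\sigma$ together with $\rho_{P_0}$, where $P_0$ is the stabiliser in $P$ of the pile $0$; since $P$ is $2$-transitive, $P_0$ is transitive on $[k]\setminus\{0\}$. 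Using the hypothesis $k>n$, I would show by a direct combinatorial argument that $\langle\sigma,\rho_{P_0}\rangle$ is already transitive on $[kn]\setminus\{0\}$, which with the transitivity of $\sh(P,n)$ (Lemma~\ref{lemma:transitive1}) gives that $\sh(P,n)$ is $2$-transitive; by Burnside's theorem it is then affine or almost simple, and it remains to determine which possibility occurs.

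For part~(1) we may assume $\alt(k)\leqslant P$, so $\sh(P,n)$ contains $\rho_{\alt(k)}\cong\alt(k)$ and $|\sh(P,n)|\geqslant k!/2$. Since $n<k$ the degree $kn$ is less than $k^2$, and the order of any proper $2$-transitive group of degree $kn$ — almost simple, from the list in Section~\ref{sec:2trans}, or affine of the form $C_p^d\rtimes H$ with $H\leqslant\GL(d,p)$ and $p^d=kn$ — grows much more slowly in $k$ than $k!/2$; so all but finitely many pairs $(k,n)$ are excluded, and the remaining small cases are settled by direct computation. This is precisely where the exception $(k,n)=(4,2)$, with $\sh(P,n)=\AGL(3,2)$, appears; in every other case $\sh(P,n)\geqslant\alt(kn)$. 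Whether $\sh(P,n)$ equals $\alt(kn)$ or $\sym(kn)$ is decided by computing the signs of $\sigma$ and of the $\rho_\tau$ as permutations of $[kn]$ in terms of the residues $k_{(4)}$ and $n_{(4)}$, exactly as in \cite[Theorem~1]{MM}; the precise statement is Lemma~\ref{lem:2trans-full}.

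For part~(2), write $T=\soc(P)$, a nonabelian simple group; it suffices to show $\sh(P,n)$ is not affine. If it were, with regular normal (hence elementary abelian) subgroup $N$ of order $kn$, then $N\cap\rho_P$ would be a normal subgroup of $\rho_P\cong P$ contained in the abelian group $N$, so it would meet $T$ trivially, and therefore $T$ would embed into $\sh(P,n)/N\leqslant\GL(d,p)$, where $p^d=kn$. In particular $T$ would have a faithful $\mathbb F_p$-representation of dimension $d=\log_p(kn)$; together with the $2$-transitive action of $T$ of degree $k$ — whence $k$ is a prime power and $T$ is of type $\PSL$, $\PSU$, $\Sz$ or $\Ree$, or one of finitely many sporadic groups — and with $\sigma$ and $\rho_{P_0}$ lying in the point stabiliser $\sh(P,n)_0\leqslant\GL(d,p)$, this constrains $(T,k,n,p)$ tightly enough to eliminate every case, with direct computation for the finitely many residual possibilities. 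Hence $\sh(P,n)$ is almost simple.

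For part~(3), suppose $P$ is affine with $k=p^e\geqslant3$. Since $k>n\geqslant2$, whenever $n=p^f$ we have $e>f\geqslant1$, so $e\nmid f$; then, by the monotonicity of $\sh$ in $P$ and by Theorem~\ref{thm:productid}(3), $\sh(P,n)\leqslant\sh(\AGL(e,p),n)=\AGL(e+f,p)$, so $\sh(P,n)$ is affine. If instead $n$ is not a power of $p$, then $kn=p^e n$ is not a prime power, so by the Burnside dichotomy $\sh(P,n)$ is almost simple; one then rules out every proper almost simple $2$-transitive group of degree $kn$ — running through the infinite families $\PSL$, $\Sp$, $\PSU$, $\Sz$, $\Ree$ and the finitely many sporadic examples — using the containment $\rho_P\leqslant\sh(P,n)$, the primitivity of $\sh(P,n)$, the order of $\sigma$ (the multiplicative order of $k$ modulo $kn-1$), and the arithmetic of the admissible degrees, to conclude $\sh(P,n)\geqslant\alt(kn)$; the parity is once more fixed by the signs of $\sigma$ and the $\rho_\tau$ in terms of $k_{(4)}$ and $n_{(4)}$. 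The main difficulty throughout lies in this classification bookkeeping in parts~(2) and~(3), and in the affine elimination in part~(1): excluding each infinite family and each sporadic $2$-transitive group of degree $kn$ requires careful control of the arithmetic of the possible degrees and of the way $\rho_P$ and $\sigma$ can sit inside a candidate group.
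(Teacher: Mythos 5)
Your overall architecture is the same as the paper's: prove $2$-transitivity directly from $k>n$, invoke Burnside's dichotomy, and then eliminate the candidate $2$-transitive groups of degree $kn$, finishing small cases by machine. Two of your tactical choices genuinely differ and are reasonable in principle. In part (1) you compare $|\sh(P,n)|\geqslant k!/2$ with order bounds for proper $2$-transitive groups of degree $kn<k^2$, whereas the paper (Lemma~\ref{lem:2trans-full}) bounds the minimal degree: $\rho_\tau$ for a transposition moves only $2n<2k$ points, against Bochert's bound $kn/4-1$ \cite{bochert}, which gives $k\leqslant 14$ at once; your order comparison needs an explicit citable bound (you give none) and only bites for considerably larger $k$, so the ``finitely many remaining cases'' you defer to computation form a much longer, unspecified list. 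In part (2) you would apply the $2$-transitive classification to $T=\soc(P)$, whereas the paper uses Guralnick's theorem \cite{guralnickppower} on subgroups of prime-power index; note that $\soc(P)$ is only guaranteed to be transitive of degree $k$ (not $2$-transitive in general, e.g.\ $\PSL(2,8)$ of degree $28$), so your step ``$T$ is $2$-transitive of degree $k$'' needs repair, though it is fixable by working with $P$ or arguing as the paper does. Your treatment of the affine half of part (3) (monotonicity of $\sh$ in $P$ plus Theorem~\ref{thm:productid}) is essentially the paper's appeal to Proposition~\ref{prop:affine}, with the same implicit assumption that $P$ preserves the standard base-$p$ labelling of the piles.

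The genuine gaps are these. First, the $2$-transitivity of $\sh(P,n)$ is precisely the step you leave as ``a direct combinatorial argument''; it is not routine. The paper's Lemma~\ref{lem: 2trans-k>n} needs three separate regimes, $k\geqslant 2n$, $n<k<2n$ and $k=n+1$, and in the last case one must build two a priori different $G_0$-orbits covering $[kn]\setminus\{0\}$ and then exhibit a common point to merge them; nothing in your sketch indicates how this (or why $k>n$ is exactly what makes it work) would go. Second, and more seriously, in parts (2) and (3) everything after Burnside is asserted rather than proved: ``constrains $(T,k,n,p)$ tightly enough to eliminate every case'' and ``one then rules out every proper almost simple $2$-transitive group of degree $kn$'' is the mathematical core of the theorem and occupies most of Section~\ref{sec:2trans}. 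The paper needs the fact that $\soc(P^\rho)\leqslant T$ (Lemma~\ref{lemma:socP}), Zsigmondy primes and $p$-part computations, Singer cycles and Sylow normaliser arguments, and the cross-characteristic dimension bounds of \cite[Theorem 5.3.9]{KL} (for $\PSL(d,q)$ in Lemma~\ref{lem:2trans-aff} and again in Lemma~\ref{lem: 2-trans PSL}) in order to reduce the infinite families $\Sp$, $\Sz$, $\Ree$, $\PSU$, $\PSL$ to the explicit finite list of Table~\ref{table:2transAS-ii}, which is only then dispatched by {\sc Magma}. The constraints you name (the order of $\sigma$, primitivity, $P^\rho\leqslant\sh(P,n)$) are not shown to suffice for any of these eliminations, and without producing the explicit residual list the appeal to ``direct computation for the finitely many residual possibilities'' cannot be carried out. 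As it stands the proposal is a correct plan whose hard steps are missing.
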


The condition that $k$ is greater than $n$ is necessary in Theorem~\ref{thm:2trans}, for Theorem ~\ref{thm:productid} shows that  $\sh(\sym(k),k) = \sym(k)\wr C_2$, which is not 2-transitive on $[k^2]$.

\subsection{Cascading shuffle groups}

Different values of $k$ and $n$ with the same product $kn$ can yield different shuffle groups acting on the same set. A priori these groups have nothing to do with each other, yet we have found that for certain values of $k$, there is a deep connection between the groups.

Fix $k=2^e$ for some integer $e \geqslant 2$ and let $n$ be arbitrary. Then, for each $t\in \{1,2,\dots,e\}$, define $V_t$ to be the elementary abelian group of order $2^t$ acting regularly on the set $[2^t]$. Further, set $G_t=\sh(V_t, 2^{e-t}n)$, so that each of the $e$ shuffle groups $G_1$, \dots, $G_e$ acts on $2^en$ points. Note that $G_1= \sh(\sym(2),2^{e-1}n)$ is a shuffle group appearing in the theorem of Diaconis  et ~al, Theorem~\ref{dgk}. 

\begin{thm}
\label{thm:k=2^e}
Suppose that $n$ is not a power of $2$ and $k=2^e$ with $e\geqslant 2$. The  following hold:
\begin{enumerate}
\item if $(k,n)=(4,3)$, then $G_1 = C_2^6\rtimes \sym(5)$ and $G_2 = C_2^5 \rtimes \sym(5)$;
\item if $(k,n)=(4,6)$, then $G_1 =G_2$ and $G_1 \cong C_2^{11} \rtimes M_{12}$;
\item if $(k,n)=(8,3)$, then $G_1 =G_2= G_3$ and $G_1 \cong C_2^{11} \rtimes M_{12}$;
\item if $k=4$ and $n\geqslant 5$ is odd, then $G_1 = C_2 \wr \sym(2n)$ and $G_2 = \ker(\mathrm{sgn})$;
\item in all other cases, $G_1 = G_2 = \dots = G_e$ and $G_1 =\ker(\mathrm{sgn}) \cap \ker( \overline{\mathrm{sgn}})$.
\end{enumerate}
\end{thm}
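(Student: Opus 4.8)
The plan is to reduce everything to the pair of permutations $\sigma_1$ and $\rho$ of $[2^en]$ generating $G_1$, where $\rho$ is the half-deck involution $p\mapsto p+2^{e-1}n\pmod{2^en}$. First I would identify $G_1=\sh(\sym(2),m)$ with $m:=2^{e-1}n$ using Theorem~\ref{dgk}. As $n$ is not a power of $2$, neither is $m$, so the wreath-product row of Table~\ref{tab:the shuffle group} never applies; and a direct check shows that $m=6$ iff $(e,n)=(2,3)$, that $m=12$ iff $(e,n)\in\{(2,6),(3,3)\}$, that $m\equiv2\pmod 4$ iff $e=2$ and $n$ is odd (in which case $m\ge 10$ iff $n\ge 5$), and that in every remaining case $m\equiv 0\pmod 4$ and $m\ge 20$. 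Reading off Table~\ref{tab:the shuffle group} (and using $\PGL(2,5)\cong\sym(5)$) then yields the value of $G_1$ asserted in each of the five cases; in particular $G_1=\ker(\mathrm{sgn})\cap\ker(\overline{\mathrm{sgn}})$ in case~(5), $G_1\cong C_2^{11}\rtimes M_{12}$ in cases~(2) and~(3), $G_1=B_{2n}$ in case~(4), and $G_1=C_2^6\rtimes\sym(5)$ in case~(1).

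Next I would record the ``cascading identities'' that express each $G_t$ through $\sigma_1$ and the $\rho^{(e)}_i$. The standard shuffle on $k$ piles of size $N$ sends position $qN+r$ to $kr+q$, hence acts as multiplication by $k$ modulo $kN-1$ (fixing $0$ and $kN-1$); with $k=2^t$, $N=2^{e-t}n$ this gives $\sigma_t(p)\equiv 2^tp\pmod{2^en-1}$ for all $p$, so $\sigma_t=\sigma_1^{\,t}$ in $\sym([2^en])$. Writing $\rho^{(t)}_1,\dots,\rho^{(t)}_t$ for the generators of $\rho_{V_t}$ that flip the successive bits of the pile index in $[2^t]$, comparing block structures gives $\rho^{(t)}_i=\rho^{(e)}_{\,i+e-t}$ for $1\le i\le t$; in particular $\rho^{(t)}_t=\rho^{(e)}_e=\rho$ for every $t$. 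Hence $G_t=\langle\sigma_1^{\,t},\rho^{(e)}_{e-t+1},\dots,\rho^{(e)}_e\rangle$ and $G_1=\langle\sigma_1,\rho\rangle$. For the upper bound I would then compute signs: each $\rho^{(e)}_i$ is a product of $2^{e-1}n$ transpositions of $[2^en]$, so $\mathrm{sgn}(\rho^{(e)}_i)=1$ since $e\ge 2$; it preserves the central-symmetry pairing $p\leftrightarrow 2^en-1-p$ (passing to the mate complements all $e$ bits of $\lfloor p/n\rfloor$), so $\rho^{(e)}_i\in B_{2^{e-1}n}$; and $\rho^{(e)}_i$ has no fixed pair, so $\overline{\mathrm{sgn}}(\rho^{(e)}_i)=(-1)^{2^{e-2}n}$, which equals $1$ whenever $e\ge 3$, or $e=2$ and $n$ is even. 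Combined with $\sigma_1^{\,t},\rho\in G_1$ this yields $G_t\le\ker(\mathrm{sgn})\cap\ker(\overline{\mathrm{sgn}})$ in cases~(2),~(3),~(5) and $G_t\le\ker(\mathrm{sgn})$ in case~(4).

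The crux is the matching lower bound. In cases~(1)--(3), where $2^en\le 24$, the stated equalities (including the explicit groups) are confirmed by direct computation. In case~(5) we have $G_1=\langle\sigma_1,\rho\rangle=\ker(\mathrm{sgn})\cap\ker(\overline{\mathrm{sgn}})$, so, as $\rho\in G_t$, it suffices to prove $\sigma_1\in G_t$; then $G_t=G_1$, giving $G_1=\dots=G_e$. For this one uses that $G_t$ contains all conjugates $\sigma_1^{\,t}\rho^{(e)}_j(\sigma_1^{\,t})^{-1}$ with $e-t+1\le j\le e$; a short computation identifies these with the bit-flips $\phi_m\colon p\mapsto p\oplus 2^m$ for $0\le m\le t-1$, i.e.\ with the regular $C_2^{\,t}$-action on the ``interleaved'' pile structure. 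One then argues, by a Jordan-type generation argument inside $B_{2^{e-1}n}$ (using transitivity of $G_t$, Lemma~\ref{lemma:transitive1}), that $\langle\sigma_1^{\,t},\phi_0,\dots,\phi_{t-1},\rho\rangle$ already contains $\ker(\mathrm{sgn})\cap\ker(\overline{\mathrm{sgn}})$, so in particular $\sigma_1\in G_t$. In case~(4), where $e=2$ and $n$ is odd, the element $\rho^{(2)}_1\in G_2$ has $\overline{\mathrm{sgn}}(\rho^{(2)}_1)=(-1)^n=-1$, so $G_2\not\le\ker(\overline{\mathrm{sgn}})$; together with $G_2\le\ker(\mathrm{sgn})$ and a similar generation argument this forces $G_2=\ker(\mathrm{sgn})$, while $G_1=B_{2n}$ from the first step.

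The step I expect to be the main obstacle is this last one: bounding $G_t$ from below. The DGK bookkeeping, the cascading identities, and the sign computations are routine, but showing that the comparatively few generators $\sigma_1^{\,t},\rho^{(e)}_{e-t+1},\dots,\rho^{(e)}_e$ of $G_t$ already generate the full target subgroup of $B_{2^{e-1}n}$ requires a careful transitivity/Jordan analysis inside the wreath product, plus a finite computation to settle the three small exceptional cases.
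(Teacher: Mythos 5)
Your reduction of $G_1=\sh(\sym(2),2^{e-1}n)$ to Table~\ref{tab:the shuffle group}, your cascading identities $\sigma_t=\sigma_1^{\,t}$ and $(V_t)\rho_t\leqslant (V_e)\rho$, and your parity computations are all correct and correspond to what the paper does in Lemma~\ref{lem:power of s} and Corollary~\ref{cor:parity}. The problem is the lower bound, which you yourself flag as ``the main obstacle'': in cases (4) and (5) you reduce everything to the assertion that $\langle\sigma_1^{\,t},\phi_0,\dots,\phi_{t-1},\rho\rangle$ already contains $\ker(\mathrm{sgn})\cap\ker(\overline{\mathrm{sgn}})$, justified only by an unspecified ``Jordan-type generation argument inside $B_{2^{e-1}n}$''. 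That assertion is essentially the whole content of the theorem: showing that a handful of explicit elements of the wreath product generate $2^{m-1}.\alt(m)$ is exactly the kind of statement that required the full machinery of Diaconis--Graham--Kantor in the first place, and transitivity of $G_t$ (Lemma~\ref{lemma:transitive1}) plus a generic Jordan-style argument does not deliver it — Jordan-type theorems produce primitive overgroups of $\alt$, whereas here the target is an imprimitive subgroup of $\sym(2^en)$, so there is no off-the-shelf result to invoke. In case (4) the gap is the same: knowing $G_2\leqslant\ker(\mathrm{sgn})$ and $G_2\not\leqslant\ker(\overline{\mathrm{sgn}})$ does not force $G_2=\ker(\mathrm{sgn})$ unless you already know $G_2$ contains the derived subgroup of $B_{2n}$, which is again the unproved generation claim.

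The idea you are missing is the one the paper's argument turns on: the conjugation relations $x_r^{\sigma}=x_{r+1}$ (Lemma~\ref{lem:power of s}(c),(d)) show not only that $G_t\leqslant G_1$ but that $G_t$ \emph{normalises} $G_{t+1}$ (Lemma~\ref{lem:containment}); hence $G_2\trianglelefteq G_1$, and iteratively each $G_{t+1}$ is normal in $G_1$ once the previous equalities are established. Since the structure of $G_1$ is known from Theorem~\ref{dgk}, one only has to inspect its normal subgroups: in case (5) the proper nontrivial normal subgroups of $2^{2^{e-1}n-1}.\alt(2^{e-1}n)$ are abelian, and $G_t$ is visibly nonabelian, so $G_t=G_1$ with no generation argument at all; in case (4) normality gives $[G_1,G_1]\leqslant G_2$, so $G_2$ has index $1$, $2$ or $4$ in $B_{2n}$, and the parity facts (your $\overline{\mathrm{sgn}}$ computation, or the paper's reading of \cite[Table III]{DGK}) then pin down $G_2=\ker(\mathrm{sgn})$; the same normality argument also settles $(4,6)$ and $(8,3)$ without computation, leaving only $(4,3)$ to a machine check. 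So your proposal is not wrong in its bookkeeping, but as it stands it has a genuine hole at the decisive step, and filling it the way you suggest would amount to reproving a DGK-type theorem rather than quoting one.
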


Although the connection between these cascading shuffle groups is striking in its own right, our principal goal for investigating their structure is to provide tools to analyse their overgroups. In particular, since $G_e = \sh(V_e,n) \leqslant \sh(\sym(k),n)$, we are able to use the previous theorem to obtain:

\begin{cor}
\label{cor:k=2^e}
 Suppose that $n$ is not a power of $2$ and that $k = 2^e$ for some $e\geqslant 2$. Then $\sh(\sym(k),n)$ is the  alternating group  or the symmetric group if $n$ is even or odd, respectively.
\end{cor}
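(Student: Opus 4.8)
The plan is to derive the corollary from Theorem~\ref{thm:k=2^e}, together with Theorems~\ref{thm:prim} and~\ref{thm:2trans}, using only the containment $\sh(V_e,n)\leqslant\sh(\sym(k),n)$. Write $G=\sh(\sym(k),n)$, $d=kn$, and $m=d/2=2^{e-1}n$. Since $V_e$ (the elementary abelian group of order $2^e$ in its regular action) is a subgroup of $\sym(2^e)=\sym(k)$, the group $G_e=\sh(V_e,n)$ described in Theorem~\ref{thm:k=2^e} satisfies $G_e\leqslant G$, and by Theorem~\ref{thm:prim} the group $G$ is primitive on $[kn]$ because $k=2^e\geqslant3$. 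It is therefore enough to prove $G\geqslant\alt(kn)$: then $G$ is $\alt(kn)$ or $\sym(kn)$, and which of the two holds is settled by the signs of the generators $\sigma$ and $\rho_\tau$ ($\tau\in\sym(k)$) -- here $\rho_\tau$ has sign $\mathrm{sgn}(\tau)^n$, so $\rho_{(1\,2)}$ is an odd permutation when $n$ is odd -- and the resulting criterion, recorded in \cite[Theorem~1]{MM}, is that for $k=2^e$ with $e\geqslant2$ one has $G\leqslant\alt(kn)$ precisely when $n$ is even. This yields $G=\alt(kn)$ for $n$ even and $G=\sym(kn)$ for $n$ odd, as claimed.

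First I would dispose of the case $k>n$. Then, $n$ not being a power of $2$, we have $n\geqslant3$ and $(k,n)\neq(4,2)$, so part~(1) of Theorem~\ref{thm:2trans} applied with the $2$-transitive group $P=\sym(k)$ already gives that $G$ equals $\alt(kn)$ or $\sym(kn)$, and we are done by the previous paragraph.

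Next I would treat the case $k\leqslant n$. Then $(k,n)$ falls under case~(2), (4), or~(5) of Theorem~\ref{thm:k=2^e}, and in each of these $G_e$ is an explicitly described group: in case~(5), $G_e=\ker(\mathrm{sgn})\cap\ker(\overline{\mathrm{sgn}})\cong C_2^{m-1}\rtimes\alt(m)$; in case~(4) ($k=4$, $n\geqslant5$ odd), $G_e=\ker(\mathrm{sgn})\cong C_2^{2n-1}\rtimes\sym(2n)$; and in case~(2) ($(k,n)=(4,6)$), $G_e\cong C_2^{11}\rtimes M_{12}$. In particular $G$ is primitive of degree $d$, with $d$ even (indeed $4\mid d$), and contains a subgroup $G_e$ of known order having a known nonabelian composition factor, namely $\alt(m)$ in case~(5) (with $m\geqslant6$), $\alt(2n)$ in case~(4) (with $2n\geqslant10$), and $M_{12}$ in case~(2). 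I would then invoke Maróti's classification of primitive groups by order: a primitive group of degree $d$ either (a)~lies, in product action, inside some $\sym(\ell)\wr\sym(r)$ with socle $(\alt(\ell))^r$ and $d=\binom{\ell}{s}^r$, or (b)~is $M_{11}$, $M_{12}$, $M_{23}$, or $M_{24}$ in its natural action (of degree $11$, $12$, $23$, or $24$), or (c)~has order less than $d^{1+\lfloor\log_2 d\rfloor}$, or (d)~contains $\alt(d)$. Alternative~(a) fails because the composition factor $\alt(m)$, $\alt(2n)$, or $M_{12}$ of $G$ forces $\alt(\ell)\cong$ that factor with $\ell\geqslant6$, and then $\binom{\ell}{s}^r=d$ has no solution (while $M_{12}$ is no alternating group); alternative~(b) fails because $d\notin\{11,12,23,24\}$ in cases~(4) and~(5) with $k\leqslant n$, and in case~(2), where $d=24$, one has $|G_e|=2^{11}|M_{12}|\nmid|M_{24}|$, so $G_e\not\leqslant M_{24}$; and alternative~(c) fails since a direct estimate gives $|G_e|>d^{1+\lfloor\log_2 d\rfloor}$, the smallest degrees arising being $d=20$ (case~(4), $(k,n)=(4,5)$), $d=24$ (case~(2)), and $d=40$ (case~(5), $(k,n)=(4,10)$), and $|G_e|$ already exceeds the bound in each. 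Hence alternative~(d) holds, $G\geqslant\alt(kn)$, and the proof is complete.

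The main obstacle will be the case $k\leqslant n$: bootstrapping the bare containment $G_e\leqslant G$ up to $G\geqslant\alt(kn)$. The mechanism -- playing the primitivity of $G$ (Theorem~\ref{thm:prim}) off against an order bound for primitive groups -- is robust, but the bound must be pushed all the way down to degree $d=20$, where the coarser estimate $|G|<4^d$ does not suffice; so the order comparisons that exclude Maróti's small-order alternative really do need to be carried out, together with a careful check of the exceptional isomorphisms among small alternating, classical, and Mathieu groups when ruling out his product-action and Mathieu alternatives. One also needs the elementary sign computation of \cite[Theorem~1]{MM} to convert $G\geqslant\alt(kn)$ into the stated even/odd dichotomy.
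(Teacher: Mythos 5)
Your proposal is correct in substance, but it reaches the key step $G\geqslant\alt(kn)$ by a genuinely different route from the paper. Both arguments share the same skeleton: the containment $G_e=\sh(V_e,n)\leqslant G=\sh(\sym(k),n)$, the explicit structure of $G_e$ from Theorem~\ref{thm:k=2^e}, primitivity of $G$ from Theorem~\ref{thm:prim}, and Corollary~\ref{cor:parity} to convert $G\geqslant\alt(kn)$ into the even/odd dichotomy. Where you diverge is the boosting step: the paper notes that in cases (4) and (5) the subgroup $G_e$ contains $L=\ker(\mathrm{sgn})\cap\ker(\overline{\mathrm{sgn}})$, which has elements of cycle type $5^2$, and then applies Wielandt's Theorem 13.10 to a hypothetical maximal overgroup $M$ of $G$ in $\alt(kn)G$ to force $M$ imprimitive, contradicting primitivity of $G$; the three exceptional pairs $(4,3)$, $(4,6)$, $(8,3)$ are settled by {\sc Magma}. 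You instead dispose of all $k>n$ (including $(4,3)$ and $(8,3)$) via Theorem~\ref{thm:2trans}(1)/Lemma~\ref{lem:2trans-full}, and for $k\leqslant n$ you play the order of $G_e$ against Mar\'oti's classification of primitive groups of given degree. Your route buys a uniform, computation-free treatment of the remaining cases (including $(4,6)$, which the paper does by machine), at the cost of invoking a CFSG-based theorem not currently in the paper's bibliography, whereas the paper's Wielandt argument is elementary and classical. Two points in your sketch need tightening to be a complete proof: in excluding Mar\'oti's product-action alternative, the large section $\alt(m)$ (or $M_{12}$) of $G_e$ does not literally ``force $\alt(\ell)\cong$ that factor'' --- it could a priori be involved in $\alt(\ell)$ with $\ell>m$ or in the top group $\sym(r)$ --- but in all such cases the degree constraint $\binom{\ell}{s}^r=d=2m$ with $\ell\geqslant m$ or $r\geqslant m-1$ is violated for $d\geqslant 20$, so the exclusion goes through once you run this size comparison; and the inequality $|G_e|>d^{1+\lfloor\log_2 d\rfloor}$ should be verified for all admissible degrees, not only the smallest ones, though this is immediate since $|G_e|\geqslant 2^{m-2}m!$ with $m=d/2$ grows far faster than the quasi-polynomial bound.
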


The results on cascading shuffle groups are proved in Section~\ref{sec:cascading}. Using Theorem~\ref{dgk} for $k=2$, Theorem~\ref{thm:MMprod} and \cite[Theorem 2.6]{CHMW} for $k=4$ and $n$ a power of 2,  Corollary~\ref{cor:productid} for $k=2^e \geqslant 8$ and $n$ a power of 2 and Corollary~\ref{cor:k=2^e} for $k=2^e \geqslant 4$ and $n$ not a power of 2, the structure of the shuffle group $\sh(\sym(2^e),n)$ is now known for all integers $e$ and $n$.

\subsection{Summary and questions}\label{sec: summary}

Concerning Conjecture~\ref{conj:CHMW} we have shown that  the conjecture holds, that is, that $\sh(\sym(k),n) \geqslant \alt(kn)$, in the following instances:
\begin{enumerate}
\item $k>n$,
\item $k=2^e$ for some $e\geqslant 2$ and $n \neq 2^f$ for any $f$,
\item $k=\ell^e$ and $n=\ell^f$ for some positive integer $\ell$ where $e \nmid f$ and $k\neq 4$.
\end{enumerate}
The first extension of this work would be to the case that $k=3^e$ and $n \neq 3^f$ for any $f$. Some arguments that we have given in Section~\ref{sec:cascading} might generalise, but we note that for such $n$ there is no information concerning the structure of $\sh(\sym(3),3^{e-1}n)$ and thus the endgame to exploit the potential connection between $\sh(\sym(3^e),n)$ and    $\sh(\sym(3), 3^{e-1}n)$ cannot yet be achieved.

Concerning computations,  at several points we have made use of  {\sc Magma}  \cite{magma}. We have verified 
the following remarkable fact: for $k \in \{3,\dots,13\}$ and $n \in \{k+1,\dots,1000\}$ with $n \neq k^f$ for any $f$, we have that $\sh(C_k,n)$ contains $\alt(kn)$.
This suggests that Conjecture~\ref{conj:CHMW} might be ``overwhelmingly true''.
%

Considering what might be the converse of the conjecture, if $P \leqslant \sym(k)$ is such that $\sh(P,n)$ contains $\alt(kn)$, then $P$ need not even be transitive. There are \emph{many} examples,  for small values of $k$ and $n$, of intransitive groups $P$ with  $\alt(kn)\leqslant \sh(P,n)$. On the other hand, in Remark~\ref{rem:intrans} we give infinitely many easy examples of nontrivial intransitive subgroups $P$ with $\sh(P,n)$  intransitive on $[kn]$.
Thus the precise conditions on $k$, $n$ and  $P \leqslant \sym(k)$ to guarantee that $\sh(P,n)$ is not the full alternating group or symmetric group seem to be quite delicate.

\begin{qn} 
Is it possible to classify the pairs $(P,M)$ such that $P<\sym(k)$ and $M<\sym(kn)$ are both maximal subgroups, and $\sh(P, n)\leq M$?
\end{qn}

Theorems~\ref{thm:productid} and~\ref{thm:2trans} give some examples of $(P,M)$. An answer in the general case would be very interesting.

\section*{Acklowledgements}
We thank Kent Morrison for sharing the slides of his talk at the 2009 MathFest on this subject and general enthusiasm about our work.

\section{Preliminaries}

Throughout this paper we use the following notation: for any positive integer $u$, let
$$ [u] = \{0, 1, \ldots, u-1\}$$
and let $\sym(u)$ and $\alt(u)$ denote the symmetric and alternating groups on the set $[u]$.
Let $k$ and $n$ be fixed integers greater than $1$, and let $[kn]$ be a deck of  cards divided into $k$ piles with $n$ cards each. Thus the cards are labelled  $0, \dots, kn-1$ and the piles as $0, \ldots, k-1$ (from left to right, say), with $an, an+1, \ldots, (a+1)n-1$ (from top to bottom) being the cards in pile $a$. Thus each card in $[kn]$ has a unique label $an+b$ for some $a \in [k]$ and $b \in [n]$.

Suppose that the piles are arranged in a single row. The \emph{standard shuffle} is performed by picking up the top card from each of the piles from left to right and repeating until all cards have been picked up. As a permutation on $[kn]$ we denote the standard shuffle by $\sigma$. Let $i \in [kn]$ and suppose that $i = an + b$, with $a \in [k]$ and $b \in [n]$. Then the card $i$ is the $b$th card in the $a$th pile; thus under the shuffle $\sigma$, the $bk$ cards in the top $b$ rows have already been picked up prior to card $i$ and $i$ is the $a$th card in its row to be picked up. Hence, taking $[n]$ and $[k]$ as sets of residues modulo $n$ and $k$, respectively, we have
	\begin{equation} \label{eq: s}
	\sigma \ : \ i \mapsto bk + a = (i \pmod{n}) \cdot k + \left\lfloor \frac{i}{n} \right\rfloor.
	\end{equation}

Each permutation $\tau$ of the $k$ piles induces a permutation $\rho_\tau$ of the $kn$ cards. Since $\rho_\tau$ does not change the order of the cards within each pile, it follows that for each $a \in [k]$ and $b \in [n]$,
	\begin{equation} \label{eq: rho}
	(an+b)^{\rho_\tau} = a^\tau n + b.
	\end{equation}
If $P$ is a permutation group on the set of piles, we denote by $P^\rho$ the group $\{ \rho_\tau \ | \ \tau \in P \}$ of corresponding permutations of $[kn]$ (in effect, $\rho$ may be thought of as an embedding of $P$ into $\sym(kn)$).
This leads to the following definition.

\begin{defn}\label{def:sh}
Let $k$ and $n$ be integers greater than $1$, view $[kn]$ a deck of $kn$ cards divided into $k$ piles with $n$ cards each, and let $P \leqslant \sym(k)$. The \emph{generalised shuffle group} on $[kn]$ is the group $\sh(P,n)$ defined by
	\[ \sh(P,n):= \langle \sigma, \rho_\tau \mid \tau \in P \rangle. \]
\end{defn}


The following is fundamental.

\begin{lem} \cite[Proposition 1]{MM} \label{lem:shuffle acts}
The shuffle $\sigma$ fixes both $0$ and $kn-1$. For $i \in [kn]$ with $i \neq kn-1$ we have
 \[ \sigma : i \mapsto ki \pmod{kn-1} \]
taking $\{0, \dots, kn-2\}$ to be a set of residues modulo $kn-1$.
\end{lem}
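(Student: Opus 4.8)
The plan is to verify the claim directly from the explicit formula for $\sigma$ given in equation~\eqref{eq: s}, by a short computation with residues. First I would record the two fixed points: from \eqref{eq: s}, the card $0 = 0\cdot n + 0$ has $a = b = 0$, so $0^\sigma = 0\cdot k + 0 = 0$; and the card $kn-1 = (k-1)n + (n-1)$ has $a = k-1$, $b = n-1$, so $(kn-1)^\sigma = (n-1)k + (k-1) = nk - 1 = kn-1$. Thus $\sigma$ fixes both $0$ and $kn-1$, and consequently $\sigma$ restricts to a permutation of the set $\{1, \dots, kn-2\}$, or equivalently of $\{0,\dots,kn-2\}$ if we include the fixed point $0$.

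Next I would establish the congruence $i^\sigma \equiv ki \pmod{kn-1}$ for $i \neq kn-1$. Write $i = an + b$ with $a \in [k]$, $b \in [n]$. Then \eqref{eq: s} gives $i^\sigma = bk + a$. Now compute
\[
ki - i^\sigma = k(an+b) - (bk+a) = akn + bk - bk - a = a(kn-1),
\]
so $i^\sigma = ki - a(kn-1) \equiv ki \pmod{kn-1}$, as required. Finally, to see that $i^\sigma$ is genuinely \emph{the} residue of $ki$ modulo $kn-1$ in the range $\{0,\dots,kn-2\}$ — not merely congruent to it — I would note that $i^\sigma = bk + a$ satisfies $0 \leqslant bk + a \leqslant (n-1)k + (k-1) = kn-1$, and the value $kn-1$ is attained only when $b = n-1$ and $a = k-1$, i.e. only when $i = kn-1$, which is excluded. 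Hence for $i \neq kn-1$ we have $0 \leqslant i^\sigma \leqslant kn-2$, so $i^\sigma$ lies in the chosen residue system and therefore equals $ki \bmod (kn-1)$ exactly.

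There is no real obstacle here: the only point requiring a moment's care is the last one — checking that the formula $bk+a$ already lands in the correct range $\{0,\dots,kn-2\}$ so that the congruence pins down $i^\sigma$ uniquely rather than only up to multiples of $kn-1$. Everything else is a one-line substitution into \eqref{eq: s}. I would present the argument essentially in the three steps above: (i) the two fixed points, (ii) the congruence $ki - i^\sigma = a(kn-1)$, and (iii) the range check, citing \eqref{eq: s} throughout.
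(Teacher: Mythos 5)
Your proposal is correct and follows essentially the same route as the paper: both use the explicit formula \eqref{eq: s} to get $i^\sigma = bk+a$, verify the congruence $ki \equiv bk+a \pmod{kn-1}$ (your difference computation $ki - i^\sigma = a(kn-1)$ is the same reduction the paper carries out), and make the same range check that $bk+a < kn-1$ when $i \neq kn-1$ so the congruence pins down the value. The only cosmetic difference is that you compute the two fixed points directly from \eqref{eq: s}, whereas the paper argues they are the first and last cards picked up.
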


\begin{proof}
Since the first card is labelled $0$ and is picked up first, we have $0^\sigma = 0 \equiv k \cdot 0 \pmod{kn-1}$. Similarly, the last card is labelled $kn-1$ and is picked up last, so $(kn-1)^\sigma = kn-1$. Now let $i \in [kn]$ with $i \neq 0, kn-1$. Let $a,b \in \mathbb{N}$ be such that $i = an+b$. Then $i^{\sigma} = a+bk$ by (\ref{eq: s}). Now observe that, since  $i \neq kn-1$, at least one of $a < k-1$ or $b < n-1$ holds and hence $a+bk < (k-1) + (n-1)k = kn-1$, so $a+bk = i^\sigma \pmod{kn-1}$. Using this fact, we observe that $ik = (an+b)k = akn + bk \equiv a + bk \pmod{kn-1}$. Thus we have established that $i^{\sigma} \equiv ki \pmod{kn-1}$ as required.
\end{proof}

\begin{lem}[{\cite[Lemmas 1 and 2]{MM}}]
\label{lem:parity} The following hold:
	\begin{enumerate}
	\item The shuffle $\sigma \in \alt(kn)$ if and only if either $n_{(4)} \in \{0,1\}$ or $k_{(4)} \in \{0,1\}$.
	\item For any $\tau \in \sym(k)$, $\rho_\tau \in \alt(kn)$ if and only if either $n$ is even or $\tau \in \alt(k)$.
	\end{enumerate}
\end{lem}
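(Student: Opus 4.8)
The plan is to compute each sign directly from the explicit formulas already established. For part (1), I would use Lemma~\ref{lem:shuffle acts}, which tells us that $\sigma$ acts on the $kn-1$ points $\{0,\dots,kn-2\}$ as multiplication by $k$ modulo $kn-1$, while fixing the point $kn-1$. The sign of $\sigma$ is therefore the sign of this multiplication map on $\mathbb{Z}_{kn-1}$. The cycle structure of ``multiplication by $k$'' on $\mathbb{Z}_m$ (here $m = kn-1$, which is coprime to $k$) is governed by the multiplicative order of $k$ modulo each divisor $d$ of $m$: for each $d\mid m$, the $\phi(d)$ elements of order $d$ (in the additive sense, i.e.\ with $\gcd(\cdot,m)=m/d$) split into $\phi(d)/\mathrm{ord}_d(k)$ cycles each of length $\mathrm{ord}_d(k)$. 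Rather than track all this, the cleaner route is: an element is even iff it is a square in the relevant sense, or one counts the number of cycles of even length. I would instead reduce to a parity count of $(kn-1) - c(\sigma)$ where $c(\sigma)$ is the number of cycles; since $\sigma$ is a power of the ``multiplication by a primitive root'' permutation only when $k$ is such a root, the most robust approach is a direct case analysis on $k_{(4)}$ and $n_{(4)}$, checking the four residue classes of $kn-1 \pmod 4$ and the order of $2$ vs.\ odd primes. Honestly, the slick argument is: $\sigma$ restricted to $\mathbb{Z}_{kn-1}\setminus\{0\}$ has the same parity as multiplication by $k$, and one shows $\mathrm{sgn}(x\mapsto kx) = \left(\frac{k}{kn-1}\right)$-type Jacobi-symbol expression; then $n_{(4)}\in\{0,1\}$ or $k_{(4)}\in\{0,1\}$ is exactly the condition making this $+1$. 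I would cite \cite{MM} for the bookkeeping and reproduce only the key congruence.

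For part (2), the computation is genuinely routine. By~\eqref{eq: rho}, $\rho_\tau$ acts on the $kn$ cards by $an+b\mapsto a^\tau n+b$; so $\rho_\tau$ permutes the $k$ blocks $\{an,\dots,an+n-1\}$ exactly as $\tau$ permutes $[k]$, and acts identically inside corresponding blocks. If $\tau$ is a single cycle of length $r$ in $\sym(k)$, then $\rho_\tau$ is a product of $n$ disjoint $r$-cycles on the cards, hence has sign $(\mathrm{sgn}(\tau))^n = (-1)^{n(r-1)}$. Extending multiplicatively over the cycle decomposition of $\tau$ gives $\mathrm{sgn}(\rho_\tau) = (\mathrm{sgn}(\tau))^n$. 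This equals $+1$ precisely when $n$ is even or $\mathrm{sgn}(\tau)=+1$, i.e.\ $\tau\in\alt(k)$, which is the claim. I would write this out in two or three lines.

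The main obstacle is part (1): the sign of a ``multiplication by $k$'' permutation on $\mathbb{Z}_{kn-1}$ is a classical but slightly fiddly computation, since it depends on the factorisation of $kn-1$ and the multiplicative order of $k$ modulo its prime-power divisors, and one has to massage this into a clean statement purely in terms of $k_{(4)}$ and $n_{(4)}$. Since this is exactly \cite[Lemmas 1 and 2]{MM}, the right move is to invoke those results rather than redo the number theory; if a self-contained argument is wanted, I would isolate the key fact that the number of cycles of $\sigma$ of even length has a controlled parity as a function of $kn-1 \bmod 8$ together with whether $k$ is even, and verify the four-by-four table of residues. Part (2) presents no difficulty.
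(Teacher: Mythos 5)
This lemma is quoted in the paper directly from \cite[Lemmas 1 and 2]{MM} with no proof supplied, so your plan of deferring part (1) to that reference is exactly what the paper does, and your two-line computation for part (2) (each $r$-cycle of $\tau$ lifts to $n$ disjoint $r$-cycles, so $\mathrm{sgn}(\rho_\tau)=\mathrm{sgn}(\tau)^n$) is correct and is the standard argument. Nothing further is needed.
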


\begin{cor} \label{cor:parity}
$\sh(P,n) \leqslant \alt(kn)$ if and only if one of the following holds:
	\begin{enumerate}
	\item $n_{(4)} = 0$, or
	\item $n_{(4)} = 2$ and $k_{(4)} \in \{0,1\}$, or
	\item $n$ is odd, $P \leqslant \alt(k)$, and either $n_{(4)} = 1$ or $k_{(4)} \in \{0,1\}$;
	\end{enumerate}
\end{cor}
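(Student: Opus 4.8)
The plan is to reduce the statement to the behaviour of the generators of $\sh(P,n)$. Since $\sh(P,n)=\langle \sigma,\rho_\tau\mid \tau\in P\rangle$ and $\alt(kn)\trianglelefteq\sym(kn)$, a subgroup lies in $\alt(kn)$ precisely when each of a generating set does; hence $\sh(P,n)\leqslant\alt(kn)$ if and only if $\sigma\in\alt(kn)$ and $\rho_\tau\in\alt(kn)$ for every $\tau\in P$. The first of these is controlled by Lemma~\ref{lem:parity}(1): $\sigma\in\alt(kn)$ exactly when $n_{(4)}\in\{0,1\}$ or $k_{(4)}\in\{0,1\}$. For the second, Lemma~\ref{lem:parity}(2) says that $\rho_\tau\in\alt(kn)$ for all $\tau\in P$ if and only if $n$ is even, or $\tau\in\alt(k)$ for all $\tau\in P$, i.e. $P\leqslant\alt(k)$ (here using that $\tau\mapsto\rho_\tau$ is an injective homomorphism, so being in $\alt(kn)$ corresponds exactly to being in $\alt(k)$).

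Next I would combine these two equivalences and split into cases according to the residue $n_{(4)}$. If $n_{(4)}=0$ then $n$ is even and $n_{(4)}\in\{0,1\}$, so both conditions hold automatically and $\sh(P,n)\leqslant\alt(kn)$ unconditionally; this is clause~(1). If $n_{(4)}=2$ then $n$ is still even, so the $\rho$-condition is automatic, while the $\sigma$-condition forces $k_{(4)}\in\{0,1\}$; this is clause~(2). If $n$ is odd, i.e. $n_{(4)}\in\{1,3\}$, then the $\rho$-condition becomes $P\leqslant\alt(k)$; moreover the $\sigma$-condition holds automatically when $n_{(4)}=1$ and reduces to $k_{(4)}\in\{0,1\}$ when $n_{(4)}=3$. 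Merging these two odd subcases gives: $n$ odd, $P\leqslant\alt(k)$, and additionally $n_{(4)}=1$ or $k_{(4)}\in\{0,1\}$, which is precisely clause~(3). Since this exhausts all residues of $n$ modulo $4$, the displayed list is exactly the set of conditions under which $\sh(P,n)\leqslant\alt(kn)$.

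I do not anticipate a genuine obstacle: the corollary is a bookkeeping consequence of Lemma~\ref{lem:parity} and the elementary fact that a subgroup is contained in the alternating group iff all its generators are. The only place that needs a little care is verifying that the two odd-$n$ subcases collapse cleanly into the single clause~(3) without over- or under-counting the role of $k_{(4)}$, and that the even-$n$ cases $n_{(4)}=0$ and $n_{(4)}=2$ are correctly distinguished by whether the $\sigma$-condition is automatic or imposes a constraint on $k$. Once the case split is laid out as above, each implication is immediate in both directions.
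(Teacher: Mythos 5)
Your proposal is correct and is essentially the argument the paper intends: the corollary is stated there without proof, as an immediate consequence of Lemma~\ref{lem:parity}, and your generator-by-generator parity check (using that $\sh(P,n)\leqslant\alt(kn)$ iff $\sigma$ and every $\rho_\tau$, $\tau\in P$, lie in $\alt(kn)$) followed by the case split on $n_{(4)}$ is exactly that deduction. The parenthetical appeal to $\tau\mapsto\rho_\tau$ being an injective homomorphism is unnecessary (and would not by itself control parity); the relevant equivalence is just Lemma~\ref{lem:parity}(2) together with the observation that ``$\tau\in\alt(k)$ for all $\tau\in P$'' means $P\leqslant\alt(k)$.
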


We extend \cite[Proposition 3]{MM} to the case where $P$ is any transitive subgroup of $\sym(k)$ (so the proof cannot use arbitrary permutations in $\sym(k)$).

\begin{lem} \label{lemma:transitive1}
If $P$ is a transitive subgroup of $\sym(k)$, then $\sh(P,n)$ is a transitive subgroup of $\sym(kn)$.
\end{lem}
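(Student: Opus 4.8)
The plan is to trace the $\sh(P,n)$-orbit $\Delta$ of the card $0$ and show it exhausts $[kn]$; the argument will use only that $P$ is transitive on $[k]$. First I would describe the orbits of the subgroup $P^\rho\leqslant\sh(P,n)$ on $[kn]$. By (\ref{eq: rho}), $\rho_\tau$ fixes the residue $b=i\bmod n$ of a card $i=an+b$ and sends the pile coordinate $a$ to $a^\tau$, so since $P$ is transitive the $P^\rho$-orbits are exactly the $n$ ``rows''
\[ R_b=\{\,an+b\mid a\in[k]\,\}\qquad(b\in[n]), \]
each of size $k$. As $\Delta$ is $P^\rho$-invariant it is a union of rows, and since $0\in R_0$ we get $R_0\subseteq\Delta$.

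Next I would compute how $\sigma$ moves rows. By (\ref{eq: s}), for fixed $b$ the image $(an+b)^\sigma=bk+a$ runs over the block of $k$ consecutive card labels $S_b:=\{bk,bk+1,\dots,bk+k-1\}\subseteq[kn]$ as $a$ ranges over $[k]$; that is, $\sigma$ maps $R_b$ bijectively onto $S_b$. Since card $m$ lies in row $m\bmod n$, the set $S_b$ meets exactly the rows indexed by $f(b):=\{\,(bk+j)\bmod n\mid 0\leqslant j<k\,\}$, a run of $k$ consecutive residues mod $n$. Using that $\Delta$ is $\sigma$-invariant and is a union of rows, I would then record the key closure property: writing $B:=\{\,b\in[n]\mid R_b\subseteq\Delta\,\}$, we have $0\in B$, and $b\in B$ implies $f(b)\subseteq B$.

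Finally I would run a short ``spreading'' argument to conclude $B=[n]$. Let $M$ be the largest integer with $\{0,1,\dots,M-1\}\subseteq B$, so $M\geqslant1$ because $0\in B$; suppose for contradiction $M<n$. Then $\bigcup_{b=0}^{M-1}f(b)\subseteq B$, and as $\bigcup_{b=0}^{M-1}S_b=\{0,1,\dots,Mk-1\}$ this set of row-indices equals $\{\,m\bmod n\mid 0\leqslant m<Mk\,\}$, which is either $\{0,\dots,Mk-1\}$ (impossible, since $Mk>M$ when $k\geqslant2$, contradicting the maximality of $M$) or all of $[n]$ (impossible, since $M<n$). Hence $M=n$, so $\Delta\supseteq\bigcup_{b\in[n]}R_b=[kn]$ and $\sh(P,n)$ is transitive. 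I do not expect a genuine obstacle here; the only point needing care is the termination of the spreading step, which relies on $k\geqslant2$ (so that the interval of reached rows strictly grows at each stage) — and when $k\geqslant n$ the single block $S_0$ already meets every row, giving transitivity at once.
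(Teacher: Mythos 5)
Your proof is correct. The row decomposition is sound: by \eqref{eq: rho} the $P^\rho$-orbits on $[kn]$ are exactly the rows $\mathcal{R}_b$ (transitivity of $P$ on $[k]$ is used precisely here), so the $\sh(P,n)$-orbit $\Delta$ of $0$ is a union of rows containing $\mathcal{R}_0$; by \eqref{eq: s} the $\sigma$-image of $\mathcal{R}_b$ is the block $\{bk,\dots,bk+k-1\}$, which yields the closure property $b\in B\Rightarrow f(b)\subseteq B$; and your maximality argument on the initial interval of $B$ terminates correctly because $k\geqslant 2$ forces strict growth until the reached residues cover $[n]$. This is a genuinely different organization from the paper's proof, which is constructive: there one first shows, via the base-$k$ expansion $b=\sum_r i_r k^r$ and an explicit word $(\rho_{\tau_m}\sigma)\cdots(\rho_{\tau_0}\sigma)$ with $0^{\tau_r}=i_r$, that $0$ can be mapped to any $b\in[n]$ (i.e.\ the orbit contains the pile $\mathcal{C}_0$), and only then spreads along rows with a single $\rho_\tau$. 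In effect the two proofs are dual: the paper secures the pile $\mathcal{C}_0$ first and spreads across it with $P^\rho$, while you get the row $\mathcal{R}_0$ for free as a $P^\rho$-orbit and spread across rows with $\sigma$; both exploit the same multiplication-by-$k$ growth. Your version trades the explicit element (which the paper's telescoping computation produces, and which can be handy if one wants a witness in the generators) for a shorter, non-constructive invariance-plus-maximality argument; either is a complete proof of the lemma.
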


\begin{proof}
Let $G = \sh(P,n)$ and let $i = an+b$ for $a \in [k]$ and $b \in [n]$.  We claim that there is an element $g \in G$ such that $0^g = b$. If the claim is true, then since $P$ is transitive, there is $\tau \in P$ such that $0^\tau = a$ (recall that $\tau$ acts on $[k]$). Hence $0^{g \rho_\tau} = b^{\rho_\tau} = (0 \cdot n + b)^{\rho_\tau}  = (0^\tau)n + b = an+b = i$ (using \eqref{eq: rho}). Thus if the claim holds then $G$ is transitive.

We now prove the claim. Let $m$ be such that $k^m \leqslant n-1 < k^{m+1}$. Since $b \in [n]$ we may write $b = \sum_{r=0}^m i_r\, k^r$, where $0 \leqslant i_r \leqslant k-1$ for each $r$. Since $P$ is transitive on $[k]$, for each $r \in \{0, \ldots, m\}$ there is an element $\tau_r \in P$ such that $0^{\tau_r} = i_r$. Now for any  $c \in [n]$, it follows from \eqref{eq: rho} that 
$c^{\rho_{\tau_r}} = (0 \cdot n + c)^{\rho_{\tau_r}} = i_r n + c$, so that, using \eqref{eq: s}, 
	\begin{equation} \label{eq: taursigma}
	(c^{\rho_{\tau_r}})^ \sigma  = (i_r n + c)^\sigma = ck + i_r.
	\end{equation}
If $m = 0$ then $b = i_0$ and $0^{\rho_{\tau_0}\sigma} = i_0 = b$, as required. If $m \geqslant 1$ then for any $\ell \leqslant m-1$,
	\[ \sum_{r=0}^\ell i_{r+m-\ell}\, k^r \leqslant k^m - 1 < n-1 \]
and for $\ell = m$,
	\[ \sum_{r=0}^\ell i_{r+m-\ell}\, k^r = \sum_{r=0}^\ell i_r\, k^r = b \leqslant n-1. \]
Using this fact together with (\ref{eq: rho}) and (\ref{eq: taursigma}) we have
	\begin{align*}
	0^{(\rho_{\tau_m}\sigma) \dots (\rho_{\tau_0}\sigma)}
	&= (i_m)^{(\rho_{\tau_{m-1}}\sigma) \dots (\rho_{\tau_0}\sigma)} \\
	&= (i_m k + i_{m-1})^{(\rho_{\tau_{m-2}}\sigma) \dots (\rho_{\tau_0}\sigma)} \\
	&\vdots \\
	&= \left(\sum_{r=0}^\ell i_{r+m-\ell}\, k^r\right)^{(\rho_{\tau_{m-\ell-1}}\sigma) \dots (\rho_{\tau_0}\sigma)} \\
	&\vdots \\
	&= \left (\sum_{r=0}^{m-1} i_{r+1}\, k^r \right )^{ (\rho_{\tau_0}\sigma)} \\
	&= \sum_{r=0}^m i_r\, k^r \\
	&= b
	\end{align*}
This establishes the claim, and hence the lemma.
\end{proof}
%
%
%

\begin{rem}
\label{rem:intrans}
A shuffle group can be intransitive. Let $k$ and $n$ be arbitrary. If $P\leqslant \sym(k)$ is such that $0^\tau = 0$ for all $\tau \in P$, then in the action of $P^\rho$ on $[kn]$ we have $0^{\rho_\tau} = 0$ for all $\tau \in P$. Since $0^\sigma = 0$, the group $\sh(P,n)$ fixes $0$, and is therefore intransitive.
\end{rem}

\subsection{Permutation group concepts and results}\label{sub:perm}

Let $A$ be a permutation group acting on a finite set $\Delta$. For an element $a \in A$, we denote by $\supp_\Delta(a)$ the set of points of $\Delta$ that are moved by $a$, that is, points $\delta \in \Delta$ such that $\delta^a \ne \delta$, and we call $|\supp_\Delta(a)|$ the \emph{degree} or \emph{support size} of $a$. The \emph{minimal degree} of $A$, denoted $\textrm{mindeg}_\Delta(A)$, is the minimum of $|\supp_\Delta(a)|$ over all non-identity elements $a \in A$. The action of $A$ is said to be \emph{semiregular} if the stabiliser of any point is the identity.

Let $\Sigma = \Delta^c$ for some integer $c \geqslant 2$, and denote the $i$th factor of $\Sigma$ by $\Delta_i$. Each of the groups $A^c$ and $\sym(c)$ has a natural action on $\Sigma$: the former by acting componentwise, and the latter by permutation of the coordinates $i$. The \emph{product action} of the wreath product $A \wr \sym(c) = A^c \rtimes \sym(c)$ on $\Sigma$ is the combination of these actions of $A^c$ and $\sym(c)$. More precisely, for any ${\boldsymbol\delta} = (\delta_1, \ldots, \delta_c) \in \Sigma$, $g = (a_1, \ldots, a_c) \in A^c$, and $\rho \in \sym(c)$,
	\[ 
	{\boldsymbol\delta}^{g\rho} = \big(\delta^{a_{1'}}_{1'}, \ldots, \delta^{a_{c'}}_{c'}\big) 
	\]
where $i' = i^{\rho^{-1}}$ for each $i$.

\begin{lem} \label{lem:mindeg}
Let $\Delta$ be a set with $|\Delta| = d$, and $A \leqslant \sym(\Delta)$ such that $A\ne 1$. Let $X = A \wr \sym(c) = A^c \rtimes \sym(c)$ with $c \geqslant 2$, in product action on $\Sigma = \Delta^c$. Let $m = \textrm{mindeg}_\Delta(A)$. Then
\[
\textrm{mindeg}_\Sigma(X) =  \left \{ \begin{array}{l l }
	 d^{c-1}m & \text{ if $A$ is   not semiregular,} \\ 
	 d^{c-1}(d-1) &  \text{ if $A$ is   semiregular. } \end{array} \right .
	 \]
In particular $\textrm{mindeg}_\Sigma(X) \leqslant d^{c-1}(d-1)$.
\end{lem}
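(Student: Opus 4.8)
The plan is to compute $\textrm{mindeg}_\Sigma(X)$ by analysing an arbitrary non-identity element $g\rho \in X = A^c \rtimes \sym(c)$, where $g = (a_1,\dots,a_c) \in A^c$ and $\rho \in \sym(c)$, and counting the points of $\Sigma = \Delta^c$ it moves. The key observation is that whether or not $\boldsymbol\delta = (\delta_1,\dots,\delta_c)$ is fixed by $g\rho$ depends, coordinate by coordinate along the cycles of $\rho$, on products of the $a_i$ around each cycle. So I would first reduce to two cases: (a) $\rho = 1$, so $g\rho = g = (a_1,\dots,a_c)$ acts componentwise; (b) $\rho \neq 1$, so $\rho$ has some non-trivial cycle.

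\textbf{Case $\rho = 1$.} Here $g \neq 1$ means some $a_j \neq 1$. A point $\boldsymbol\delta$ is moved iff $\delta_i^{a_i} \neq \delta_i$ for some $i$, i.e.\ iff $\boldsymbol\delta \notin \prod_i \mathrm{Fix}_\Delta(a_i)$. To \emph{minimise} the support I want to \emph{maximise} $\prod_i |\mathrm{Fix}_\Delta(a_i)| = \prod_i (d - |\supp_\Delta(a_i)|)$, so I should take exactly one $a_j$ non-trivial and the rest trivial, and pick $a_j$ of minimal degree. If $A$ is not semiregular, minimal degree $m$ is attained by an element with a fixed point, giving $|\mathrm{Fix}_\Delta(a_j)| = d-m$ and support size $d^{c-1}\bigl(d - (d-m)\bigr) = d^{c-1}m$ for $g$. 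If $A$ is semiregular, every non-identity element has $\mathrm{Fix}_\Delta = \varnothing$, so the best one can do with $\rho = 1$ is support size $d^{c-1}(d-1)$, attained by taking $a_j$ to be any non-identity element.

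\textbf{Case $\rho \neq 1$.} Let $C$ be a non-trivial cycle of $\rho$, say of length $s \geq 2$ on coordinates $i_1,\dots,i_s$. I would show that, on the set of points whose coordinates outside $C$ are fixed, the action of $g\rho$ restricted to the $\Delta^s$-block indexed by $C$ is conjugate (via a "twisting" relabelling) to the action of the single permutation $a := a_{i_1}a_{i_2}\cdots a_{i_s} \in A$ applied to the first coordinate with a cyclic shift of the others — the standard fact that a cycle in a wreath product is "concentrated" into one coordinate. Consequently a point is fixed by $g\rho$ only if (after the twist) its $C$-coordinates are equal and lie in $\mathrm{Fix}_\Delta(a)$; in particular at most $d^{c-s}\cdot d = d^{c-s+1} \leq d^{c-1}$ points of $\Sigma$ are fixed (using $s \geq 2$), so $g\rho$ moves at least $d^c - d^{c-1} = d^{c-1}(d-1)$ points. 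Hence no element with $\rho \neq 1$ can do better than the semiregular bound, and in the non-semiregular case the elements from Case $\rho = 1$ with $m < d-1$ strictly beat it. Combining the two cases gives the stated formula, and the final inequality $\textrm{mindeg}_\Sigma(X) \leq d^{c-1}(d-1)$ is immediate since $m \leq d-1$ always.

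\textbf{Main obstacle.} The only genuinely delicate point is Case $\rho \neq 1$: making precise the "concentration of a cycle into one coordinate" and deducing the correct count of fixed points of $g\rho$. I would handle this by fixing a non-trivial cycle $(i_1\,i_2\,\cdots\,i_s)$ of $\rho$ and, for a point $\boldsymbol\delta$, chasing the orbit of $\boldsymbol\delta$ under $g\rho$: writing out $(g\rho)^s$ on these coordinates shows the $i_1$-coordinate is acted on by $a_{i_1}a_{i_2}\cdots a_{i_s}$ (up to the precise convention for product action given before Lemma~\ref{lem:mindeg}), while the remaining coordinates $\delta_{i_2},\dots,\delta_{i_s}$ are determined by $\delta_{i_1}$ and the $a_{i_j}$ if $\boldsymbol\delta$ is to be fixed. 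That pins the fixed-point set inside the $C$-block to size at most $d$, and everything else follows by the product structure. Care is needed only to get the index conventions right and to note that we never need $a \neq 1$: even the crude bound "$\le d$ choices in the $C$-block" already suffices to reach $d^{c-1}(d-1)$.
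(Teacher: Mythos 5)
Your overall strategy (split according to whether the top component $\rho$ is trivial, bound supports via fixed-point counts, concentrate a nontrivial cycle of $\rho$ into one coordinate) is sound and close to the paper's argument, and your treatment of the non-semiregular case and of the lower bound $d^{c-1}(d-1)$ for elements with $\rho\neq 1$ is correct. But there is a genuine error in the semiregular case. You claim that for $\rho=1$ and $A$ semiregular, ``the best one can do \dots is support size $d^{c-1}(d-1)$, attained by taking $a_j$ to be any non-identity element.'' This is false: if $A$ is semiregular then $a_j$ moves \emph{every} point of $\Delta$, so the element $(1,\dots,a_j,\dots,1)$ moves every point of $\Sigma$ and has support $d^c$, not $d^{c-1}(d-1)$ (indeed every non-identity element of $A^c$ has support $d^c$ in this case). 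As a result, your proposal never exhibits any element of $X$ whose support is exactly $d^{c-1}(d-1)$, so in the semiregular case you have only proved $\mathrm{mindeg}_\Sigma(X)\geqslant d^{c-1}(d-1)$, not the stated equality. The same confusion resurfaces in your last sentence: ``$m\leqslant d-1$ always'' is wrong precisely when $A$ is semiregular, where $m=d$; the final inequality there must come from the equality itself, not from a bound on $m$.

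The repair is exactly the witness the paper uses: take $g=1$ and $\rho$ a transposition $(i\,j)$ in $\sym(c)$. A point $\boldsymbol\delta$ is fixed by this element if and only if $\delta_i=\delta_j$ (the other coordinates being arbitrary), so the fixed-point set has size exactly $d^{c-1}$ and the support is exactly $d^{c-1}(d-1)$. Your cycle-concentration analysis in the case $\rho\neq1$ already contains all the ingredients for this exact count (take $s=2$ and trivial base part, so $a=1$ and the ``twist'' is trivial); you only used it to give an upper bound on fixed points, whereas the semiregular case also needs the matching exact computation for one specific element. With that one addition (and the corresponding correction to the last sentence), your proof is complete and is essentially a slightly more refined version of the paper's: the paper handles $\rho\neq1$ with the cruder observation that choosing $j=i^{\alpha}\neq i$ and requiring $\delta_j\neq\delta_i^{g_i}$ already forces at least $d^{c-1}(d-1)$ moved points, which is all that is needed.
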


\begin{proof}
Note that $m \leqslant d-1$ if $A$ is not semiregular on $\Delta$. First we exhibit an element of $X$ with support size $d^{c-1}m$ when $A$ is not semiregular. By definition $A$ contains an element with support size $m$ in $\Delta$. Let $a$ be such an element, and let $g := (a,1,\dots,1) \in A^c \leqslant X$. Then a point ${\boldsymbol\delta} = (\delta_1, \dots, \delta_c) \in \Sigma$ is moved by $g$ if and only if $\delta_1^a \ne \delta_1$. Thus there are no restrictions on the $\delta_i$ for $i > 1$ but we must have $\delta_1 \in \supp_\Delta(a)$, and hence $|\supp_\Sigma(g)| = d^{c-1}m$.

Next we show that each $g = (g_1, \dots, g_c) \in A^c \setminus \{1\}$ has support size at least $d^{c-1}m$ in $\Sigma$. This holds if $A$ is semiregular on $\Delta$ since in this case $m=d$ and $A^c$ is semiregular on $\Delta^c$. If $A$ is not semiregular, then since $g \ne 1$, we must have $g_i \ne 1$ for some $i$, and without loss of generality we may assume that $i=1$. Thus $g_1 \ne 1$ so $m':= |\supp_\Delta(g_1)| \geqslant m$. As in the previous paragraph, each point  ${\boldsymbol\delta} = (\delta_1, \dots, \delta_c) \in \Sigma$ such that $\delta_1 \in \supp_\Delta(g_1)$  is moved by $g$, and this implies that $|\supp_\Sigma(g)| \geqslant d^{c-1}m' \geqslant d^{c-1}m$.

Finally we examine the supports of elements $g \in X \setminus A^c$. Such elements have the form $g = (g_1, \dots, g_c)\alpha$ with the $g_i \in A$ and $\alpha$ a non-identity element of $\sym(c)$. Since $\alpha \ne 1$, there exist distinct $i, j \in \{1, \dots, c\}$ such that $j=i^\alpha \ne i$. For ${\boldsymbol\delta} = (\delta_1, \dots, \delta_c) \in \Sigma$, we have
	\[ 
	{\boldsymbol\delta}^g = \big(\delta_{1\alpha^{-1}}^{g_{1\alpha^{-1}}}, \dots, \delta_{c\alpha^{-1}}^{g_{c\alpha^{-1}}}\big). 
	\]
The $j^{th}$ entry of ${\boldsymbol\delta}^g$ is $\delta_{j\alpha^{-1}}^{g_{j\alpha^{-1}}} = \delta_i^{g_i}$, so if $\delta_j \ne \delta_{i}^{g_i}$ then ${\boldsymbol\delta}^g \ne {\boldsymbol\delta}$ (regardless of the choices of the $\delta_u$ for $u \ne j$). Hence $g$ has support size at least $d^{c-1}(d-1)$. If $A$ is not semiregular then this is at least $d^{c-1}m$ because $m \leqslant d-1$, and hence $\textrm{mindeg}_\Sigma(X) = d^{c-1}m$.
If $A$ is semiregular then   all $g\not\in A^c$ have support size at least $d^{c-1}(d-1)$, and if $g=\alpha$ is the transposition $(i,j)$ then the discussion shows that $g$ has support size precisely $d^{c-1}(d-1)$, and hence $\textrm{mindeg}_\Sigma(X) = d^{c-1}(d-1)$. 
\end{proof}

\subsection{Primitive prime divisors} We need the following results in Section~\ref{sec:2trans}.
For a prime $p$ and positive integer $n$, we denote by $n_p$ the highest power of $p$ dividing $n$; $n_p$ is called the \emph{$p$-part} of $n$. For a prime power $q$ and an integer $d$, the prime $p$ is called a primitive prime divisor  of $q^d-1$ if $p$ divides $q^d-1$ and does not divide $q^e-1$ for any $e<d$.  

\begin{lem}[Zsigmondy's Theorem \cite{zsig}]
\label{lem:zsig}
Let $q$ be a prime power. Then there exists a primitive prime divisor of $q^d-1$, unless $(q,d)=(2,6)$ or $q$ is odd, $d=2$ and $q+1 = 2^c$ for some $c$.
\end{lem}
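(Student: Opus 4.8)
The result is the classical case $b=1$ of Zsigmondy's theorem (originally due to Bang), and the plan is to reproduce the standard proof via cyclotomic polynomials. Write $\Phi_m$ for the $m$th cyclotomic polynomial, so that $x^d-1=\prod_{e\mid d}\Phi_e(x)$ in $\mathbb{Z}[x]$, and observe that a prime $p$ dividing $q^d-1$ is a primitive prime divisor exactly when $q$ has multiplicative order $d$ modulo $p$. The cases $d\le 2$ I would settle directly: for $d=1$ every prime dividing $q-1$ is vacuously primitive, so a primitive prime divisor exists whenever $q\ge 3$; for $d=2$ one has $q^2-1=(q-1)(q+1)$ with $\gcd(q-1,q+1)\mid 2$, so any odd prime divisor of $q+1=\Phi_2(q)$ is primitive, and such a prime fails to exist precisely when $q+1$ is a power of $2$, which forces $q$ odd. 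This is exactly the second family of exceptions.

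For $d\ge 3$ the argument rests on two elementary facts about $\Phi_d$: (i) if a prime $p$ divides $\Phi_d(q)$ and $p\nmid d$, then $q$ has order $d$ modulo $p$, so $p$ is a primitive prime divisor of $q^d-1$; and (ii) $\Phi_d(q)$ has at most one prime factor in common with $d$, namely the largest prime divisor $P(d)$ of $d$, and (since $d\ge 3$) it occurs there to the first power. Granting these, if $q^d-1$ has no primitive prime divisor then $\Phi_d(q)$ equals $1$ or $P(d)$, hence $\Phi_d(q)\le P(d)\le d$. So it suffices to prove the inequality $\Phi_d(q)>d$ for all $q\ge 2$ and $d\ge 3$ with $(q,d)\ne(2,6)$.

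The last step is this size estimate, which I expect to be the only genuine technical obstacle. From $\Phi_d(q)=\prod_{\gcd(j,d)=1}(q-\zeta_d^{\,j})$ one gets $\Phi_d(q)>(q-1)^{\varphi(d)}$ (strict for $d\ge 2$, as no factor equals $q-1$), and since $\varphi(d)\ge 2$ this already yields $\Phi_d(q)>d$ for all $q\ge 3$ after checking by hand the three values $d\in\{3,4,6\}$. The case $q=2$ is delicate, since $(q-1)^{\varphi(d)}=1$ is useless; here I would instead use the Möbius expansion $\Phi_d(2)=\prod_{e\mid d}(2^{d/e}-1)^{\mu(e)}$ to bound $\Phi_d(2)$ below by a quantity close to $2^{\varphi(d)}$, which exceeds $d$ for all but finitely many $d$ because $\varphi(d)$ eventually dominates $\log_2 d$; a direct check of the small remaining values of $d$ then leaves exactly $\Phi_6(2)=3=P(6)$, the exceptional pair $(2,6)$. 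Finally, whenever $\Phi_d(q)>P(d)$, fact (ii) shows that $\Phi_d(q)$ has a prime factor not dividing $d$, and fact (i) then delivers the required primitive prime divisor. (The general $a^d-b^d$ form of Zsigmondy's theorem follows the same template, replacing $x-\zeta$ by $a-b\zeta$; only the $b=1$ case is needed here.)
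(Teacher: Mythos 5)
The paper never proves this lemma: it is quoted as a classical result with the citation \cite{zsig}, so there is no internal argument to compare yours against. Your sketch is the standard cyclotomic-polynomial (Bang/Birkhoff--Vandiver) proof, and its skeleton is correct: facts (i) and (ii) are exactly the right key lemmas, they validly reduce the theorem to the inequality $\Phi_d(q)>d$ for $d\geqslant 3$ and $(q,d)\neq(2,6)$, and your direct treatment of $d\leqslant 2$ correctly recovers the second family of exceptions. The only place where genuine work remains is the final size estimate, which you outline rather than carry out; it does close as you predict. For $q\geqslant 3$ the bound $\Phi_d(q)>(q-1)^{\varphi(d)}\geqslant 2^{\varphi(d)}$ needs the elementary observation that $2^{\varphi(d)}\geqslant d$ for every $d\geqslant 3$ except $d=6$ (with equality at $d=4$, where the strictness of the first inequality saves you), plus the trivial check $\Phi_6(q)=q^2-q+1>6$. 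For $q=2$ your M\"obius-product plan works: $\Phi_d(2)=2^{\varphi(d)}\prod_{e\mid d}\left(1-2^{-d/e}\right)^{\mu(e)}>2^{\varphi(d)}\prod_{j\geqslant 1}\left(1-2^{-j}\right)>\tfrac{1}{4}\cdot 2^{\varphi(d)}$, since the $\mu(e)=-1$ factors exceed $1$ and the exponents $d/e$ with $\mu(e)=1$ are distinct positive integers; this exceeds $d$ outside a short explicit list of $d$, and checking that list by hand leaves only $\Phi_6(2)=3=P(6)$, i.e.\ the pair $(2,6)$. One trivial caveat concerns the statement rather than your proof: as quoted it tacitly assumes $q^d-1>1$, since for $(q,d)=(2,1)$ there is no prime divisor at all; your remark that the $d=1$ case works ``whenever $q\geqslant 3$'' already flags this.
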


\begin{lem}[{\cite[Lemma 4.1]{involutions}, \cite[Lemma 4.1]{abundant}}]\label{lemma:p-part}
Let $q$ be a prime power, $p$ a prime not dividing $q$, and $e$ the smallest positive integer such that $p$ divides $q^e-1$. Then $p$ divides $q^d-1$ if and only if $e$ divides $d$. Furthermore:
	\begin{enumerate}
	\item \label{p=2} If $p=2$, then $(q^d-1)_2 = (q^{d_2}-1)_2 = \left\{ \begin{aligned} &(q^2-1)_2 \cdot (d/2)_2 &&\text{if $d$ is even} \\ &(q-1)_2 &&\text{if $d$ is odd.} \end{aligned} \right.$
	
	\item \label{p>2} If $p \neq 2$ and $e$ divides $d$, then $(q^d-1)_p = (q^e-1)_p \cdot (d/e)_p$.
	\end{enumerate}
\end{lem}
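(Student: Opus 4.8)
The plan is to deduce everything from the standard ``lifting the exponent'' lemma, proved first in the two forms needed. Throughout, for a prime $p$ and a nonzero integer $m$ write $v_p(m)$ for the exponent of $p$ in $m$, so that $m_p = p^{v_p(m)}$. For the first assertion, since $p \nmid q$ the residue of $q$ is a unit modulo $p$ with multiplicative order exactly $e$; hence $q^d \equiv 1 \pmod p$ if and only if $e \mid d$, which is the claim (this also covers $p=2$, where $q$ is odd and $e=1$).

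For the lifting-the-exponent step I would establish: (a) if $p$ is odd, $p \mid a-b$ and $p \nmid ab$, then $v_p(a^m-b^m) = v_p(a-b) + v_p(m)$; and (b) if $a,b$ are odd with $4 \mid a-b$, then $v_2(a^m-b^m) = v_2(a-b) + v_2(m)$. Both follow the usual two steps. First, writing $m = p^t m'$ with $p \nmid m'$, the factorisation $a^{m'} - b^{m'} = (a-b)\sum_{i=0}^{m'-1} a^i b^{m'-1-i}$ together with the congruence $\sum_{i=0}^{m'-1} a^i b^{m'-1-i} \equiv m' b^{m'-1} \not\equiv 0 \pmod p$ gives $v_p(a^{m'} - b^{m'}) = v_p(a-b)$. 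Second, a binomial expansion of $(b+D)^p - b^p$ with $D = a-b$ shows $v_p(x^p - y^p) = v_p(x-y) + 1$ whenever $p \mid x-y$ and $p \nmid xy$, because every mixed term $\binom{p}{i} b^{p-i}D^i$ with $2 \le i \le p$ has strictly larger $p$-valuation than the term $pb^{p-1}D$; for $p=2$ one uses instead $x^2 - y^2 = (x-y)(x+y)$ with $x+y \equiv 2y \pmod 4$, so $v_2(x^2-y^2) = v_2(x-y)+1$ when $4 \mid x-y$ and $x,y$ are odd. Iterating the second step $t$ times from the pair $(a^{m'}, b^{m'})$ then yields the stated formula; in case (b) one checks that the hypothesis $4 \mid x-y$ persists under squaring, which holds since the relevant $2$-valuation only increases.

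With these in hand the two numbered parts are immediate. For part (2): by definition $p \mid q^e - 1$ and $p \nmid q^e$, so applying (a) with $a = q^e$, $b = 1$, $m = d/e$ gives $(q^d - 1)_p = (q^e - 1)_p \cdot (d/e)_p$ (one may also note $e \mid p-1$, so $p\nmid e$, whence $(d/e)_p=d_p$, though this is not needed). For part (1): $p=2$ forces $q$ odd and $e=1$. If $d$ is odd then $d_2=1$ and $q^d - 1 = (q-1)\sum_{i=0}^{d-1} q^i$ where the sum is a sum of $d$ odd integers, hence odd, so $(q^d - 1)_2 = (q-1)_2 = (q^{d_2}-1)_2$. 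If $d$ is even, then $q-1$ and $q+1$ are consecutive even integers, so $8 \mid q^2-1$; applying (b) with $a=q^2$, $b=1$, $m=d/2$ gives $(q^d-1)_2 = (q^2-1)_2 \cdot (d/2)_2$, and applying (b) instead with $m = d_2/2$ gives $(q^{d_2}-1)_2 = (q^2-1)_2 \cdot (d_2/2)_2$; writing $d = d_2 d'$ with $d'$ odd shows $d/2$ and $d_2/2$ have the same $2$-part, so the two expressions agree. The only real obstacle is the careful bookkeeping of $2$-adic valuations in the $p=2$ case; the odd-prime case is entirely routine.
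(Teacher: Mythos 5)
Your proof is correct: the order argument gives the divisibility criterion, your two lifting-the-exponent statements are accurately formulated (the $4\mid a-b$ hypothesis in the $p=2$ case correctly sidesteps the usual $v_2(a+b)$ correction term, and you check that it persists under squaring), and the deductions of parts (1) and (2), including the identity $(d/2)_2=(d_2/2)_2$, are all sound. The paper itself does not prove this lemma but cites it from the literature, and your argument is the standard valuation-theoretic proof of exactly this statement, so there is nothing further to compare.
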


\begin{rem} \label{rem:psylcyclic}
For a primitive prime divisor $p$ of $q^d-1$, the Sylow $p$-subgroups of $\GL(d,q)$ are cyclic. Indeed, the Sylow $p$-subgroups are contained  in the so-called Singer cycle subgroups which are cyclic of order $q^d-1$ (see \cite[Kapitel I, 7.3]{HuppertI}). This observation also applies to the simple sections of $\GL(d,q)$, such as $\PSL(d,q)$, $\mathrm{PSp}(d,q)$ (if $d$ is even) and $\PSU(d,q^{1/2})$ (if $q$ is a square), etc.
\end{rem}

\section{Product identification in the ``power case''} \label{sec:productid}

In this section we prove Theorem \ref{thm:productid}.
Let us say that we are in the \emph{power case} if there is an integer $\ell \geqslant 2$ such that $k = \ell^e$ and $n = \ell^f$ for some positive integers $e$ and $f$.

In the power case we can define an identification of $[kn]=[\ell^{e+f}]$ with $[\ell]^{e+f}$ as follows. For an integer $m$ and  $i \in [\ell^m]$, there are unique integers $ i_r \in [\ell]$ such that 
	\begin{equation} \label{eq:base-l}
	i = \sum_{r=0}^{m-1} i_r\,\ell^{m-1-r }
	\end{equation}
 and we   identify $i\in [\ell^m]$ with an element in $[\ell]^m$ as follows:
	\begin{equation} \label{eq:product-id}
	i \sim (i_0,\dots,i_{m-1}).
	\end{equation}

\begin{lem} \label{lem:pa-shuffle}
Assume that we are in the power case with $k = \ell^e$ and $n = \ell^f$.
The shuffle respects the identification of $[\ell^{e+f}]$ with $[\ell]^{e+f}$ in \eqref{eq:product-id} and for each $i\in [kn]$ we have
	\[ i^\sigma \sim \big(i_e, \ldots, i_{e+f-1}, i_0, \ldots, i_{e-1}\big). \]
That is, the effect of $\sigma$ is to shift the coordinates of any card $e$ places to the left (modulo $e+f$). Moreover, the order of $\sigma$ is $\frac{e+f}{\mathrm{gcd}(e,f)}$.
\end{lem}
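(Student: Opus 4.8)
The plan is a direct computation in base $\ell$, using the explicit formula \eqref{eq: s} for the shuffle together with the uniqueness of base-$\ell$ expansions. Since the identification \eqref{eq:product-id} is a bijection $[\ell^{e+f}]\to[\ell]^{e+f}$ and $\sigma$ is already known to permute $[\ell^{e+f}]$ (Lemma~\ref{lem:shuffle acts}), the first assertion of the lemma amounts to verifying the displayed formula for $i^\sigma$. First I would recall from \eqref{eq: s} that, writing $i = an+b$ with $a = \lfloor i/n\rfloor \in [k]$ and $b = i \bmod n \in [n]$, one has $i^\sigma = bk + a$. In the power case $k=\ell^e$ and $n=\ell^f$, so splitting the sum \eqref{eq:base-l} for $i\sim(i_0,\dots,i_{e+f-1})$ at $r=e$ shows that $a$ is precisely the integer with base-$\ell$ digit string $(i_0,\dots,i_{e-1})$ and $b$ the integer with digit string $(i_e,\dots,i_{e+f-1})$.

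Substituting into $i^\sigma = b\ell^e + a$ and collecting powers of $\ell$ gives $i^\sigma = \sum_{s=0}^{f-1} i_{e+s}\,\ell^{e+f-1-s} + \sum_{r=0}^{e-1} i_r\,\ell^{e-1-r}$, a sum in which every exponent in $\{0,\dots,e+f-1\}$ occurs exactly once and every coefficient lies in $[\ell]$ (in particular $i^\sigma = bk+a \le kn-1$, consistent with Lemma~\ref{lem:shuffle acts}). By uniqueness of base-$\ell$ expansions this is exactly the expansion \eqref{eq:base-l} of the integer whose digit string is $(i_e,\dots,i_{e+f-1},i_0,\dots,i_{e-1})$, which yields the claimed formula $i^\sigma \sim (i_e,\dots,i_{e+f-1},i_0,\dots,i_{e-1})$. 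Equivalently, under the identification $\sigma$ acts as the coordinate permutation $(i_0,\dots,i_{e+f-1}) \mapsto (i_{\pi(0)},\dots,i_{\pi(e+f-1)})$ where $\pi\colon t \mapsto t+e$ in $\mathbb{Z}/(e+f)\mathbb{Z}$, i.e.\ the left shift by $e$ places.

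For the order of $\sigma$, iterating the coordinate description shows that $\sigma^c$ acts as $t\mapsto t+ce$ in $\mathbb{Z}/(e+f)\mathbb{Z}$. Because $\ell\ge2$, whenever this map is not the identity — say it sends some $t$ to $t'\ne t$ — one can exhibit a tuple (with a $0$ in position $t$ and a $1$ in position $t'$) that is moved by $\sigma^c$; hence $\sigma^c=1$ if and only if $t\mapsto t+ce$ is the identity on $\mathbb{Z}/(e+f)\mathbb{Z}$, that is, $(e+f)\mid ce$, equivalently $\tfrac{e+f}{\gcd(e,f)}\mid c$ (using $\gcd(e,e+f)=\gcd(e,f)$). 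Therefore the order of $\sigma$ is $\tfrac{e+f}{\gcd(e,f)}$.

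I expect no genuine obstacle here: the whole lemma is in essence a bookkeeping exercise in base-$\ell$ arithmetic. The only points requiring a little care are getting the direction and indexing of the digit shift right when matching the two base-$\ell$ expansions, and noting that the hypothesis $\ell\ge 2$ is exactly what allows one to transfer the order of the coordinate permutation $\pi$ to the order of $\sigma$ itself.
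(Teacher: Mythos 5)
Your proof is correct, and it takes a slightly different computational route from the paper's. The paper proves the digit-shift formula via Lemma~\ref{lem:shuffle acts}, i.e.\ $i^\sigma \equiv ik \pmod{\ell^{e+f}-1}$: it must first dispose of the fixed points $i=0$ and $i=kn-1$ separately, then split $i=A+B$ (with $A$ carrying the first $e$ digits and $B$ the last $f$) and track the reduction of $kA$ and $kB$ modulo $\ell^{e+f}-1$. You instead work directly from the exact formula \eqref{eq: s}, $i=an+b \mapsto bk+a$, observe that in the power case $a$ and $b$ carry exactly the digit strings $(i_0,\dots,i_{e-1})$ and $(i_e,\dots,i_{e+f-1})$, and read off the expansion of $b\ell^e+a$ by uniqueness of base-$\ell$ representations. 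This buys you a cleaner argument: no case distinction for $0$ and $kn-1$, and no congruence bookkeeping, since $bk+a\leqslant kn-1$ automatically. What the paper's route buys is consistency with the ``multiplication by $k$ modulo $kn-1$'' viewpoint of Lemma~\ref{lem:shuffle acts}, which is the workhorse elsewhere (e.g.\ in Sections~\ref{sec:prim} and~\ref{sec:2trans}). Your treatment of the order of $\sigma$ matches the paper's, and you make explicit the small point the paper leaves implicit, namely that $\ell\geqslant 2$ is needed to transfer the order of the coordinate shift back to $\sigma$ itself.
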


\begin{proof}
Let $i \in [kn]$ and let $i \sim (i_0,\dots,i_{e+f-1})$ as in \eqref{eq:product-id}. We compute the effect of the shuffle $\sigma$ on $i$. If $i = 0$ or $i = kn-1$, then $i \sim (0,\dots,0)$ or $i \sim (\ell-1,\dots,\ell-1)$, respectively. Now $i^\sigma = i$, and since all of the coordinates of the representation of $i$ as in \eqref{eq:product-id} are equal, we see that shifting the entries to the left by $e$ places (modulo $e+f$) also fixes this representation of $i$. Thus we may assume that $0 \neq i \neq kn-1$. By Lemma~\ref{lem:shuffle acts} we have $i^\sigma \equiv ik \pmod{\ell^{e+f} - 1}$. Now $i=\sum_{r=0}^{e+f-1} i_r \ell^{e+f-1-r}$, and to evaluate $ik$, we write $i = A+B$ where $A = \sum_{r=0}^{e-1} i_r \ell^{e+f-1-r}$ and $B=\sum_{r=e}^{e+f-1} i_r \ell^{e+f-1-r}$. First, 
$$kA = \ell^eA = \ell^{e+f} \sum_{r=0}^{e-1}i_r\, \ell^{e-1-r} \equiv \sum_{r=0}^{e-1} i_r\, \ell^{e-1-r} \pmod{\ell^{e+f}-1}.$$
Now $\sum_{r=0}^{e-1} i_r\, \ell^{e-1-r} = \sum_{r=f}^{e+f-1} i_{r-f \pmod{e+f}}\, \ell^{e+f-1-r} = \sum_{r=f}^{e+f-1}i_{e+r \pmod{e+f}}\ell^{e+f-1-r}$.
Secondly, we write $B = \sum_{r=e}^{e+f-1} i_r \, \ell^{e+f-1-r} = \sum_{r=0}^{f-1} i_{e+r}\, \ell^{f-1-r} $ and then
$$
kB = \ell^e B = \sum_{r=0}^{f-1} i_{e+r} \, \ell^{e+f-1-r}.
$$
Hence $ik \equiv \sum_{r=0}^{e+f-1} i_{e+r \pmod{e+f}}\, \ell^{e+f-1-r} \pmod{\ell^{e+f}-1}$, and so
 $$
 i^\sigma \sim (i_e,i_{e+1},\dots,i_{e+f-1},i_0,\dots,i_{e-1}),
 $$
%
as required. For each $j$, $\sigma^j$ permutes the coordinates by moving all coordinates of a tuple to the left by $ej$ places, modulo $e+f$, and so the order of $\sigma$ is the least positive integer $q$ such that $eq$ is a multiple of $e+f$, namely $q = \frac{e+f}{\mathrm{gcd}(e,f)}$.
\end{proof}

\begin{lem} \label{lem:pa-P}
Assume we are in the power case with $k=\ell^e$ and $n=\ell^f$. If $P \leqslant \sym(\ell) \wr \sym(e)$ respects the identification of $[k]$ with $[\ell]^e$ as in \eqref{eq:product-id}, then $P^\rho$ respects the identification of $[kn]$ with $[\ell]^{e+f}$ as in \eqref{eq:product-id} and hence $P^\rho \leqslant \sym(\ell) \wr \sym(e+f)$ in product action on $[\ell]^{e+f}$. Further, if $\tau = (x_0,\dots,x_{e-1}) \alpha \in P$, then $\rho_\tau = (x_0,\dots,x_{e-1},1,\dots,1) \tilde{\alpha}$, where $\tilde{\alpha} \in \sym(e+f)$ is defined by $\tilde{\alpha}: r \mapsto r^\alpha$ for $0 \leqslant r \leqslant e-1$ and $\tilde{\alpha} : r \mapsto r$ for $e \leqslant r \leqslant e+f-1$.
\end{lem}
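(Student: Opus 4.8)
The plan is to work directly from the formula \eqref{eq: rho} for $\rho_\tau$ together with the base-$\ell$ identification \eqref{eq:base-l}--\eqref{eq:product-id}, and to trace through exactly how $\rho_\tau$ acts on the coordinate tuple of a card. First I would fix a card $i \in [kn] = [\ell^{e+f}]$ and write $i = an+b$ with $a \in [k]$, $b \in [n]$; concretely, if $i \sim (i_0,\dots,i_{e+f-1})$ then $a = \sum_{r=0}^{e-1} i_r \ell^{e-1-r}$ has tuple $(i_0,\dots,i_{e-1})$ and $b = \sum_{r=0}^{f-1} i_{e+r}\ell^{f-1-r}$ has tuple $(i_e,\dots,i_{e+f-1})$. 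This is just the assertion that the base-$\ell$ digits of $i$ split as the digits of $a$ followed by the digits of $b$, which is immediate from \eqref{eq:base-l}. By \eqref{eq: rho}, $i^{\rho_\tau} = a^\tau n + b$, so the tuple of $i^{\rho_\tau}$ is (tuple of $a^\tau$) followed by (tuple of $b$) $= (i_e,\dots,i_{e+f-1})$, unchanged.

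Next I would use the hypothesis that $\tau = (x_0,\dots,x_{e-1})\alpha \in P \leqslant \sym(\ell)\wr\sym(e)$ respects the identification of $[k]$ with $[\ell]^e$, so that the action of $\tau$ on the tuple $(i_0,\dots,i_{e-1})$ of $a$ is by the product action: $a^\tau$ has tuple $\big(i_{0^{\alpha^{-1}}}^{x_{0^{\alpha^{-1}}}},\dots, i_{(e-1)^{\alpha^{-1}}}^{x_{(e-1)^{\alpha^{-1}}}}\big)$ using the product-action convention recalled in Section~\ref{sub:perm}. Concatenating with the fixed block $(i_e,\dots,i_{e+f-1})$ gives the tuple of $i^{\rho_\tau}$; comparing this with the definition of the product action of $(x_0,\dots,x_{e-1},1,\dots,1)\tilde\alpha$ on $[\ell]^{e+f}$ — where $\tilde\alpha$ permutes the first $e$ coordinates as $\alpha$ and fixes coordinates $e,\dots,e+f-1$, so in particular for the last $f$ coordinates the entries are unchanged since the corresponding $g_i = 1$ and $\tilde\alpha$ fixes those indices — shows the two agree on every card. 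Hence $\rho_\tau = (x_0,\dots,x_{e-1},1,\dots,1)\tilde\alpha$ in product action on $[\ell]^{e+f}$, and as this holds for every $\tau \in P$, the group $P^\rho$ lies in $\sym(\ell)\wr\sym(e+f)$ and respects the identification \eqref{eq:product-id}.

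The only real care needed is bookkeeping: keeping the product-action convention $i' = i^{\rho^{-1}}$ from Section~\ref{sub:perm} straight when writing $a^\tau$, and checking that $\tilde\alpha$ as defined (fixing the last $f$ coordinates) is genuinely the permutation of $\{0,\dots,e+f-1\}$ induced on coordinates. I expect the main (minor) obstacle to be confirming that the embedding $\sym(\ell)\wr\sym(e) \hookrightarrow \sym(\ell)\wr\sym(e+f)$, $\tau \mapsto (x_0,\dots,x_{e-1},1,\dots,1)\tilde\alpha$, is compatible with the two product-action identifications simultaneously — i.e.\ that "respects \eqref{eq:product-id} on $[k]$" really does force "respects \eqref{eq:product-id} on $[kn]$" — but this follows formally once the digit-splitting $i \leftrightarrow (a,b)$ is in place, since $\rho_\tau$ acts only on the first $e$ digits and acts there exactly as $\tau$ does.
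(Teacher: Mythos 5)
Your proposal is correct and follows essentially the same route as the paper: split the base-$\ell$ digits of $i=an+b$ into the digits of $a$ and of $b$, apply \eqref{eq: rho} so that only the first $e$ digits are affected, compute $a^\tau$ via the product action of $(x_0,\dots,x_{e-1})\alpha$, and compare the resulting tuple with the product action of $(x_0,\dots,x_{e-1},1,\dots,1)\tilde\alpha$ on $[\ell]^{e+f}$. The bookkeeping points you flag (the $i'=i^{\alpha^{-1}}$ convention and the digit-splitting $i\leftrightarrow(a,b)$, which holds because $n=\ell^f$ and $b<\ell^f$) are exactly the computations carried out explicitly in the paper's proof.
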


\begin{proof}
Write each card $i\in [kn]$ in its base-$\ell$ expansion as in (\ref{eq:base-l}), and note that
	\begin{align*}
	i
	&= \sum_{r=0}^{e+f-1} i_r\, \ell^{e+f-r-1} \\
	&= \sum_{r=0}^{e-1} i_r\, \ell^{e+f-r-1} + \sum_{r=e}^{e+f-1} i_r\, \ell^{e+f-r-1} \\
	&= \ell^f \sum_{r=0}^{e-1} i_r\, \ell^{e-r-1} + \sum_{r=e}^{e+f-1} i_r\, \ell^{e+f-r-1} \\
	&= an + b
	\end{align*}
where $a = \sum_{r=0}^{e-1} i_r\, \ell^{e-r-1} \in [k]$ and $b = \sum_{r=e}^{e+f-1} i_r\, \ell^{e+f-r-1} \in [n]$
(as in the proof of Lemma~\ref{lem:pa-shuffle}). By (\ref{eq:product-id}), $a \in[k]$ is identified with the tuple $(i_0, \ldots, i_{e-1})$ in $[\ell]^e$. Let $\tau \in P$. Then $\tau = (x_0, \ldots, x_{e-1}) \alpha$ for some $\alpha \in \sym(e)$ and $x_0, \ldots, x_{e-1} \in \sym(\ell)$. By (\ref{eq: rho}) we have $i ^{\rho_{\tau}} = (an+b)^{\rho_{\tau}} =  (a^\tau)n + b$, where
	\[ a^\tau \sim (i_0, \ldots, i_{e-1})^\tau = \big( (i_{0{\alpha^{-1}}})^{x_{0{\alpha^{-1}}}}, \ldots, (i_{(e-1){\alpha^{-1}}})^{x_{(e-1){\alpha^{-1}}}} \big), \]
and so, with $\tilde{\alpha}$ as in the statement, setting $r' = r^{\tilde{\alpha}^{-1}}$ for each $r \in [e+f]$,
	\begin{align*}
	i^{\rho_\tau}
	&=  a^\tau n + b \\
	&= \left( \sum_{r=0}^{e-1} (i_{r'})^{x_{r'}} \ell^{e-r-1} \right) n + b \\
	&= \sum_{r=0}^{e-1} (i_{r'})^{x_{r'}} \ell^{e+f-r-1} + \sum_{r=e}^{e+f-1} i_r\, \ell^{e+f-r-1}.
	\end{align*}
Hence 
	\[ i^{\rho_\tau} \sim ( (i_{0'})^{x_{0'}},\dots, (i_{(e-1)'})^{x_{(e-1)'}}, i_e, \dots, i_{e+f-1}) \]
and so $\rho_\tau$ is the element $(x_0, \ldots, x_{e-1}, 1, \ldots, 1) \tilde{\alpha}$ of $\sym(\ell) \wr \sym(e+f)$, as required.
\end{proof}

\begin{prop} \label{prop:pa}
Assume we are in the power case with $k=\ell^e$ and $n=\ell^f$. For $Q \leqslant \sym(\ell)$ and $T \leqslant \sym(e)$ let $P = Q \wr T=S\rtimes T$ act in product action on $[\ell^e]$ with respect to the identification in \eqref{eq:product-id} and set $G = \sh(P,n)$.
Let $W = Q \wr \sym(e+f) = X \rtimes Y$ act in product action on $[\ell^{e+f}]$ with respect to the identification in \eqref{eq:product-id}, where $X = X_0 \times \cdots \times X_{e+f-1} \cong Q^{e+f}$, $X_r \cong Q$ for each $r \in [e+f]$, and $Y \cong \sym(e+f)$. Then $G \leqslant W$ and the following hold:
	\begin{enumerate}
	\item \label{rho_tau} $P^\rho \cap X  = X_0 \times \cdots \times X_{e-1}$ and $P^\rho \cap Y = T^\rho\leqslant \sym(e)\times 1 < \sym(e+f)$;
	\item \label{sigma-Y} the shuffle $\sigma$ lies in $Y$;
	\item \label{G cap Y} $G \cap Y = \langle \rho_\tau, \sigma \mid \tau \in T \rangle = \langle T^\rho, \sigma \rangle$;
	\item \label{X sub G} if $T$ is transitive on $[e]$, then $G \cap Y$ is a transitive subgroup of $Y$, $X \leqslant G$, and consequently $G = X \rtimes (G \cap Y) = Q \wr (G \cap Y)$ in product action on $[\ell]^{e+f}$;
	\item \label{e div f} if $f=ce$ for some $c \in \mathbb N$, then $G \cap Y \cong T \wr C_{1+c}$ (acting imprimitively on $[e+f]$);
	\item \label{e ndiv f} if $f \nmid e$ and $T = \sym(e)$, then $G \cap Y = \sym(e+f)$.
	\end{enumerate}
\end{prop}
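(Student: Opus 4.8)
The plan is to reduce every part to an analysis of the subgroup $G\cap Y$, exploiting the splitting $W=X\rtimes Y$, and then to do bookkeeping with Lemmas~\ref{lem:pa-shuffle} and~\ref{lem:pa-P}. Lemma~\ref{lem:pa-P} says that if $\tau=(x_0,\dots,x_{e-1})\alpha\in P=S\rtimes T$ then $\rho_\tau=(x_0,\dots,x_{e-1},1,\dots,1)\tilde\alpha$, where $\tilde\alpha$ moves only the first $e$ coordinates; reading off when $\rho_\tau$ lies in the base group $X$ (precisely when $\alpha=1$, i.e.\ $\tau\in S$, and $P$ contains all of $S$) and when it lies in $Y$ (precisely when every $x_i=1$, i.e.\ $\tau\in T$) yields part (1). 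Lemma~\ref{lem:pa-shuffle} shows $\sigma$ acts purely by a cyclic shift of coordinates, so $\sigma\in Y$, giving part (2); that lemma also records $|\sigma|=(e+f)/\gcd(e,f)$, which I will reuse. For part (3) I would use the retraction $\theta\colon W=X\rtimes Y\to Y$ with kernel $X$: it fixes $\sigma$ and sends $\rho_\tau$ to $\tilde\alpha\in T^\rho$, so $\theta(G)=\langle\sigma,T^\rho\rangle$; since $\theta$ restricts to the identity on $Y$, any $g\in G\cap Y$ equals $\theta(g)\in\langle\sigma,T^\rho\rangle$, and the reverse containment is immediate from parts (1) and (2).

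For part (4), assume $T$ is transitive on $[e]$. First I would show $G\cap Y=\langle\sigma,T^\rho\rangle$ is transitive on $[e+f]$: $T^\rho$ is transitive on the ``first block'' $\{0,\dots,e-1\}$, and powers of $\sigma$ translate this set by multiples of $e$ modulo $e+f$; since $\gcd(e,e+f)=\gcd(e,f)\le e$, consecutive translates overlap or abut, so their union is all of $[e+f]$. By part (1), $G$ contains $X_0\times\cdots\times X_{e-1}$, in particular $X_0$, and $G$ contains $G\cap Y$, which permutes the factors $X_0,\dots,X_{e+f-1}$ by conjugation in the same way it permutes $[e+f]$; hence every $X_r$ is a $(G\cap Y)$-conjugate of $X_0$, so $X=\langle X_0,\dots,X_{e+f-1}\rangle\le G$. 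The generators $\sigma$ and $\rho_\tau$ of $G$ all lie in $X\cdot(G\cap Y)$, which is a subgroup since $X\trianglelefteq W\ge G$; thus $G=X(G\cap Y)$, and as $X\cap Y=1$ we get $G=X\rtimes(G\cap Y)=Q\wr(G\cap Y)$ in product action on $[\ell]^{e+f}$.

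Parts (5) and (6) both study $G\cap Y=\langle\sigma,T^\rho\rangle$ directly. For part (5), with $f=ce$ write $e+f=e(1+c)$ and take the block system $B_0,\dots,B_c$ of consecutive length-$e$ intervals in $[e+f]$; by Lemma~\ref{lem:pa-shuffle}, $\sigma$ (which now has order $1+c$) permutes these blocks as a $(1+c)$-cycle without reordering the points inside blocks, so it is a ``pure'' top-group element of $\sym(e)\wr\sym(1+c)$, while $T^\rho$ acts as $T$ on $B_0$ and trivially elsewhere. Conjugating $T^\rho$ by powers of $\sigma$ carries this copy of $T$ into every block, so $G\cap Y$ contains the base group $T^{1+c}$ and the cyclic top group, giving $G\cap Y=T\wr C_{1+c}$ acting imprimitively. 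For part (6), with $T=\sym(e)$ and $f$ not a multiple of $e$ (which forces $e\ge2$), $G\cap Y$ contains $\sym(e)^\rho$, hence the adjacent transpositions $(i\ i{+}1)$ for $0\le i\le e-2$, and it contains $\sigma$, which shifts coordinates by $e$ modulo $e+f$; conjugating these transpositions by powers of $\sigma$ produces $(c\ c{+}1\bmod(e+f))$ for $c$ ranging over a union of length-$(e-1)$ intervals whose start points are the multiples of $\gcd(e,f)$ modulo $e+f$. Since $e\nmid f$ forces $\gcd(e,f)\le e-1$, consecutive such intervals leave no gap, so $G\cap Y$ contains $(c\ c{+}1\bmod(e+f))$ for every $c\in[e+f]$, and these generate $\sym(e+f)$; hence $G\cap Y=\sym(e+f)$.

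The crux is part (4): establishing $X\le G$, which rests on the transitivity of $G\cap Y$ and hence on the elementary fact $\gcd(e,e+f)=\gcd(e,f)\le e$; the parallel covering computation in part (6) is the other technical point, where one needs the strict inequality $\gcd(e,f)\le e-1$ supplied by $e\nmid f$ to rule out gaps among the shifted length-$(e-1)$ intervals. The remaining work is routine once one notes that $\sigma\in Y$ and uses the decomposition $W=X\rtimes Y$.
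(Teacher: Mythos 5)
Your proof is correct, and its overall skeleton coincides with the paper's: identify everything inside $W=X\rtimes Y$ via Lemmas~\ref{lem:pa-shuffle} and~\ref{lem:pa-P}, reduce to $G\cap Y=\langle T^\rho,\sigma\rangle$, and get $X\leqslant G$ from transitivity of $G\cap Y$ on the coordinates. The genuine divergences are local. In (3) you make explicit, via the projection $W\to Y$ with kernel $X$, the step that the paper only asserts (that $\langle P^\rho,\sigma\rangle\cap Y=\langle P^\rho\cap Y,\sigma\rangle$); this is a welcome clarification. In (4) you prove transitivity by covering $[e+f]$ with the $\sigma$-translates of $\{0,\dots,e-1\}$, using $\gcd(e,e+f)=\gcd(e,f)\leqslant e$, whereas the paper exhibits, for each position $r=pe+q$, an explicit word $\sigma^p\rho_\tau$ sending it to $0$; the two are interchangeable. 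The real difference is in (6): the paper works with the conjugates $E_r=(T^\rho)^{\sigma^{-r}}$, which are full symmetric groups on shifted length-$e$ intervals $\Delta_r$, and repeatedly applies the fact that $\langle\sym(\Delta),\sym(\Delta')\rangle=\sym(\Delta\cup\Delta')$ when $\Delta\cap\Delta'\neq\varnothing$ (the hypothesis $e\nmid f$ entering as $q\geqslant 1$ in $e+f=pe+q$), while you conjugate the adjacent transpositions of $\sym(e)^\rho$ by powers of $\sigma$ and use $\gcd(e,f)\leqslant e-1$ to see that the resulting adjacent transpositions exhaust $[e+f]$ and hence generate $\sym(e+f)$. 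Your version is more elementary and arguably cleaner bookkeeping; the paper's overlapping-blocks argument has the advantage that it transfers verbatim to the affine analogue in Proposition~\ref{prop:affine}(3), where overlapping copies of $\GL$ on subspaces play the role of the $E_r$ and there is no substitute for ``transpositions''. All the small implicit steps you use (e.g.\ that $X(G\cap Y)$ is a subgroup because $X\trianglelefteq W$, and that $\langle T^\rho,\sigma\rangle$ lies inside $T\wr C_{1+c}$ in case (5), so containment of the base and top groups gives equality) are sound.
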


\begin{proof}
By Lemmas \ref{lem:pa-shuffle} and \ref{lem:pa-P}, $T^\rho$ and $\sigma$ both lie in the top group $\sym(e+f)$ of $W$, and we comment on this in more detail. By Lemma \ref{lem:pa-P}, each element $(x_0, \dots, x_{e-1})$ of $S$ is mapped by $\rho$ to the element $(x_0, \dots, x_{e-1}, 1, \dots, 1)$ of $X$, and each element of $T$ is  is mapped by $\rho$ to an element of $Y$ with support in $[e] = \{0, \dots, e-1\}$. Thus $S^\rho\cong S$ is the subgroup $X_0 \times \cdots \times X_{e-1}$ of $X$, and $T^\rho\cong T$ is the subgroup $\{ y \in Y \ | \ \textnormal{supp}(y) = [e] \}$ of $Y$. Therefore $P^\rho \cap X = S^\rho$ and $P^\rho \cap Y = T^\rho$, which proves (\ref{rho_tau}). By Lemma \ref{lem:pa-shuffle}, the action of $\sigma$ on $[kn]$ induces a permutation of the coordinates of points of $[\ell]^{e+f}$ which sends each $r \in [e+f]$ to $(r+e) \pmod{e+f}$, so $\sigma \in Y$. This proves (\ref{sigma-Y}).

Now $G = \langle P^\rho, \sigma \rangle$, $P^\rho = (P^\rho\cap X)\rtimes (P^\rho\cap Y)$ by part (\ref{rho_tau}),  and $\sigma\in Y$ by part \eqref{sigma-Y}. Thus $G \cap Y = \langle P^\rho, \sigma \rangle \cap Y = \langle P^\rho \cap Y, \sigma \rangle = \langle T^\rho, \sigma \rangle$, 
proving part (\ref{G cap Y}). Now suppose that $T$ is transitive on $[e]$. Let $r \in [e+f]$ and write $r = pe + q$ for some $p, q \in \mathbb{Z}$ with $0 \leqslant q \leqslant e-1$. By Lemma \ref{lem:pa-shuffle}, $\sigma^{p}$ maps an arbitrary card $i \sim (i_0, \ldots, i_{e+f-1})$ to $i^{\sigma^{p}} \sim (i_{pe}, \ldots, i_{e+f-1}, i_0, \ldots, i_{pe-1})$ so in $i^{\sigma^{p}}$ the entry $i_r$ appears in position $q \in [e+f]$. Then since $T$ is transitive on $[e]$ and $q<e$, some element $\tau \in T$ satisfies $q^\tau = 0$, and hence by Lemma \ref{lem:pa-P}, $\rho_\tau$ permutes the entries of the tuple $i^{\sigma^{p}}$ in such a way that $i^{\sigma^{p}\rho_\tau}$ has entry $i_r$ in position $0$. Thus $\langle T^\rho, \sigma \rangle$ is transitive on $[e+f]$, proving the first part of (\ref{X sub G}).

To complete the proof of (\ref{X sub G}), note that $G \cap X \geqslant P^\rho \cap X = S^\rho = X_0 \times \cdots \times X_{e-1}$ by part (\ref{rho_tau}), and we have just proved that $G \cap Y$ is transitive on $[e+f]$. Now the action of $G \cap Y$ on $[e+f]$ is permutationally isomorphic to the conjugation action of $G \cap Y$ on the set $\{X_0, \ldots, X_{e+f-1}\}$ of direct factors of $X$. It follows that each $X_j \leqslant G \cap X$ and hence $X \leqslant G$. Thus $G = X \rtimes (G \cap Y) = Q \wr (G \cap Y)$ and (\ref{X sub G}) is proved.

For (\ref{e div f}) and (\ref{e ndiv f}) set $E := T^\rho$, and for any integer $r$ set $E_r := E^{\sigma^{-r}}$ and $\Delta_r := \textnormal{supp}(E_r) = \{ re, re+1, \dots, (r+1)e-1 \}$, reading entries of $\Delta_r$ modulo $e+f$.

Assume first that $f$ is a multiple of $e$. By Lemma \ref{lem:pa-shuffle}, $\sigma$ sends each  $r \in [e+f]$ to $(r-e)\pmod{e+f}$, and $\sigma$ has order $(e+f)/e = 1 + f/e$. For distinct $r, s \in [1+f/e]$ we see that $\Delta_r \cap \Delta_s = \varnothing$, so $[E_r,E_s] = 1$. In fact $\prod_{r=0}^{f/e} E_r \cong T^{1+f/e}$, and this group is normalised by $\sigma$. Clearly, $G \cap Y = \langle E, \sigma \rangle = \langle E_r, \sigma \ | \ 0 \leqslant r \leqslant f/e \rangle \cong T \wr C_{1+f/e}$. Hence (\ref{e div f}) is proved.

Now assume that $f$ is not a multiple of $e$, and that $T = \sym(e)$. Then $E_r = \sym(\Delta_r)$ for each $r \in [e+f]$. Write $e+f = pe + q$ for some $p, q \in \mathbb{Z}$ with $0 \leqslant q \leqslant e-1$, and note that $q \geqslant 1$ since $f$ is not a multiple of $e$. Now $[e+f] = \bigcup_{r=0}^p \Delta_r$. Note that if, for finite sets $\Delta$ and $\Delta'$ we have $\Delta \cap \Delta' \neq \varnothing$, then $\langle \sym(\Delta), \sym(\Delta') \rangle = \sym(\Delta \cup \Delta')$. For $0 \leqslant j \leqslant p-1$, we have $\Delta_{p+j+1} = \{ (p+j+1)e, \ldots, (p+j+2)e-1 \}$  (modulo $e+f$),  which is 
$\{ (j+1)e-q, \ldots, (j+2)e-q-1 \}$ (modulo $e+f$) since  $e+f = pe+q$, and so $\Delta_{p+j+1} \cap \Delta_j$ contains $(j+1)e-1$ (since $q \geqslant 1$) and $\Delta_{p+j+1} \cap \Delta_{j+1}$ contains $(j+1)e$ (since $q \leqslant e-1$). Thus (applying our observation twice), $\langle E_j, E_{j+1}, E_{p+j+1} \rangle = \sym(\Delta_j \cup \Delta_{j+1} \cup \Delta_{p+j+1})$ and it follows that $\langle E_j \ | \ 0 \leqslant j \leqslant p \rangle = \sym\big(\bigcup_{j=0}^p \Delta_j\big) = \sym([e+f]) \cong \sym(e+f)$. Since each $E_j \leqslant G \cap Y$ we conclude that $G \cap Y \cong \sym(e+f)$, proving (\ref{e ndiv f}).
\end{proof}

If $\ell$ is prime, then the identification in \eqref{eq:product-id} also defines an identification of $[\ell^m]$ with the vector space $\mathbb F_\ell^m$, relative to a given basis of $\mathbb F_\ell^m$.


\begin{prop} \label{prop:affine}
Assume we are in the power case with $k=\ell^e$, $n=\ell^f$ and that $\ell$ is prime. Suppose that $P =  V \rtimes T \leqslant \AGL(V) = \AGL(e,\ell)$ is an affine subgroup of $\sym(k)$ preserving the identification of $[k]$ with $\mathbb F_\ell^e$ as  in \eqref{eq:product-id}. Then there is an identification of $[kn]$ with $W:=\mathbb F_p^{e+f}$ such that $\sh(P,\ell^f) \leqslant \AGL(W)$. Further,
\begin{enumerate}
\item $\sh(P,n) = W \rtimes S$, where $S \leqslant \GL(W)$ and $S=\langle T^\rho,\sigma\rangle$,
\item if $f=ce$ for some $c\in \mathbb N$, then $S  \cong T \wr C_{c+1}$, an imprimitive linear group on $W$,
\item if $e \nmid f$ and $T= \GL(V)$, then $S  = \GL(W)$.
\end{enumerate}
\end{prop}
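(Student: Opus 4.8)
The plan is to run the affine analogue of Proposition~\ref{prop:pa}, mirroring its structure step by step but now interpreting everything inside $\AGL(e+f,\ell)$ rather than $\sym(\ell)\wr\sym(e+f)$. First I would set up the identification of $[kn]=[\ell^{e+f}]$ with $W=\mathbb F_\ell^{e+f}$ using \eqref{eq:product-id}, and record the affine analogue of Lemma~\ref{lem:pa-P}: writing $\tau=v+t\in P$ with $v\in V$ and $t\in T\leqslant\GL(e,\ell)$, I would check directly from \eqref{eq: rho} that $\rho_\tau$ acts on $W=\mathbb F_\ell^e\oplus\mathbb F_\ell^f$ as $(v,0)+\tilde t$, where $\tilde t=t\oplus 1_f\in\GL(e+f,\ell)$ fixes the last $f$ coordinates pointwise. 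In particular $P^\rho\leqslant\AGL(W)$, the translation part of $P^\rho$ is exactly the subspace $\mathbb F_\ell^e\oplus 0$, and $T^\rho\leqslant\GL(W)$. Next, from Lemma~\ref{lem:pa-shuffle}, $\sigma$ permutes the $e+f$ coordinate positions cyclically (shift by $e$), hence $\sigma\in\GL(W)$ as a permutation matrix. Therefore $G:=\sh(P,\ell^f)=\langle P^\rho,\sigma\rangle\leqslant\AGL(W)$, and since the linear part of $G$ is $S:=\langle T^\rho,\sigma\rangle$, the same Dedekind-type argument used in Proposition~\ref{prop:pa}\eqref{G cap Y} gives $G\cap\GL(W)=S$.

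The crux is then to show that the translation subgroup of $G$ is all of $W$, so that $G=W\rtimes S$; this is the affine mirror of Proposition~\ref{prop:pa}\eqref{X sub G}. The argument is: $\sigma$ normalises the translation group and acts on it by cyclically shifting the $e+f$ coordinate blocks of size $1$; since $T$ acting on $\mathbb F_\ell^e$ together with the $\sigma$-shifts moves the first block $\mathbb F_\ell^e\oplus 0$ around to cover all of $[e+f]$ (here one only needs that $\sigma$ has order $(e+f)/\gcd(e,f)$ and that shifting by multiples of $e$ reaches every residue modulo $e+f$ once $\gcd(e,e+f)=\gcd(e,f)$ divides every coordinate index $-$ in fact the $\sigma^p$-translates of the single coordinate position $0$ already sweep out a $\gcd(e,f)$-spaced set, and since $T$ is transitive on the $e$ coordinates of its block, or at least $V$ already contains all translations in its own $e$ coordinates, we get every standard basis translation), the normal closure of the translations in $P^\rho$ under $\langle\sigma\rangle$ contains every coordinate translation, hence all of $W$. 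I would phrase this exactly as in the proof of part~\eqref{X sub G}: the conjugation action of $S$ on the set of ``coordinate translation subgroups'' of $W$ is permutationally isomorphic to the action of $\langle T^\rho,\sigma\rangle$ on $[e+f]$, which is transitive by the same computation as in Proposition~\ref{prop:pa}, so $W\leqslant G$. This proves the first displayed claim.

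For parts (2) and (3) I would simply transport the conclusions of Proposition~\ref{prop:pa}\eqref{e div f} and \eqref{e ndiv f}. Observe that $\AGL(e+f,\ell)$ sits inside $\sym(\ell)\wr\sym(e+f)$ in product action and that under this embedding $\sigma$ and the $\tilde t$'s for $t\in T$ are precisely the elements appearing in Proposition~\ref{prop:pa}; moreover $\GL(e,\ell)\leqslant\sym(\ell^e)$ and the relation ``$T=\GL(V)$'' should be read through the product-action picture with $Q\wr T$ replaced by the genuinely larger group $\AGL(e,\ell)$, so the coordinate-block symmetric groups $E_r=\sym(\Delta_r)$ get replaced by the blocks $\GL(\Delta_r)$ (together with the block translations supplied by $V$). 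Concretely: in case $f=ce$, the $c+1$ conjugates $T^\rho,(T^\rho)^{\sigma^{-1}},\dots$ have pairwise disjoint coordinate supports $\Delta_0,\dots,\Delta_c$ of size $e$, so $S=\langle T^\rho,\sigma\rangle\cong T\wr C_{c+1}$ acting imprimitively on $W$ by permuting the $c+1$ subspaces $\mathbb F_\ell^e$; in case $e\nmid f$ with $T=\GL(e,\ell)$, the blocks $\Delta_j$ ($0\le j\le p$) overlap in the same pattern as in Proposition~\ref{prop:pa}\eqref{e ndiv f}, and the identity $\langle\GL(U),\GL(U')\rangle=\GL(U+U')$ when $U\cap U'\ne 0$ (applied inside $\GL(W)$, using that consecutive overlapping blocks generate the full general linear group on their span) lets the same induction conclude $S=\GL(e+f,\ell)$. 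The main obstacle is purely bookkeeping: making the ``$T=\GL(V)$'' hypothesis interact correctly with the fact that $P$ also carries the full translation group $V$, i.e.\ checking that the overlapping-block generation lemma for $\GL$ really does bootstrap from the pieces $\tilde t$ plus the coordinate translations coming from $W\leqslant G$ (which we established in part (1)) to all of $\GL(W)$; once $W\leqslant G$ is in hand this is routine, since $\GL(W)$ is generated by the block subgroups $\GL(\Delta_j)$ together with transvections linking overlapping blocks, all of which lie in $S$.
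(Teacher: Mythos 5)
Your proposal is correct and follows essentially the same route as the paper's proof: identify $[kn]$ with $W=\mathbb{F}_\ell^{e+f}$, check that $\sigma$ acts as a coordinate-shift permutation matrix and that $\rho$ sends translations of $V$ to translations and elements of $T$ to block-diagonal matrices $t\oplus 1_f$, obtain $W\leqslant \sh(P,n)$ by conjugating the coordinate translations $t_{w_q}$, $q\in[e]$, supplied by $V^\rho$ by powers of $\sigma$, and then prove (2) via the $c+1$ pairwise commuting $\sigma$-conjugates of $T^\rho$ and (3) via the overlapping-block generation fact $\langle \GL(X),\GL(Y)\rangle=\GL(X+Y)$ when $X\cap Y\neq 0$, exactly as the paper does. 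Two of your side remarks are false but not load-bearing: $\AGL(e+f,\ell)$ does \emph{not} embed in $\sym(\ell)\wr\sym(e+f)$ in product action (already $\AGL(2,2)=\sym(4)\not\leqslant\sym(2)\wr\sym(2)$), and $T^\rho$ does not permute the coordinate translation subgroups, so $S$ has no ``permutationally isomorphic'' action on $[e+f]$ as in Proposition \ref{prop:pa}(\ref{X sub G}) --- fortunately your concrete arguments (the $\sigma$-conjugation of the translations from $V^\rho$, and the direct proofs of (2) and (3)) never use these claims, so the proof stands.
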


\begin{proof}
Recall from (\ref{eq:base-l}) and (\ref{eq:product-id}) that each $i \in [kn]$ can identified with the $(e+f)$-tuple $(i_0, \ldots, i_{e+f-1})$ of coefficients of its base-$\ell$ expansion $i = \sum_{r=0}^{e+f-1} i_r\, \ell^{e+f-1-r}$, where $0 \leqslant i_r \leqslant \ell-1$ for each $r$. This establishes a bijection of $[kn]$ with $W$ and this particular bijection defines a subgroup $\AGL(W)$ of $\sym(kn)$. Similarly,  $P$ respects the identification of $[k]$ with $V=\mathbb F_\ell^e$ as in   \eqref{eq:product-id}. We can view each $i \in [kn]$ as a vector $v+u \in W$ where $v = (v_0, \dots,v_{e+f-1})$ with $v_r=i_r$ for $r=0,\dots,e-1$ and $v_r=0$ for $r=e,\dots,e+f-1$ and $u = (u_0, \dots, u_{e+f-1})$ with $u_r = 0$ for $r=0,\dots,e-1$ and $u_r=i_r$ for $r=e,\dots,e+f-1$. Also
	\[ 
	i = \sum_{r=0}^{e-1} i_r\, \ell^{e+f-r-1} + \sum_{r=e}^{e+f-1} i_r\, \ell^{e+f-r-1} = 
	\ell^f \sum_{r=0}^{e-1} i_r\, \ell^{e-r-1} + \sum_{r=e}^{e+f-1} i_r\, \ell^{e+f-r-1} = an + b 
	\]
where $a = \sum_{r=0}^{e-1} i_r\, \ell^{e-r-1}\in[\ell^e]$ and $b = \sum_{r=0}^{f-1} i_{e+r}\, \ell^{f-r-1} \in[\ell^f]$. Since $b < \ell^f = n$, we see that the first $e$ coordinates of the representation of $i$ determine the column $a$ that card $i$ lies in and the last $f$ coordinates determine the row $b$.

We work with the following bases: let $w_0 = (1,0,\dots,0)$, $w_1 = (0,1,0,\dots,0)$, \dots, $w_{e+f-1} = (0,\dots,0,1)$ be basis vectors for $W$, and let $v_0=(1,0,\dots,0)$, \dots, $v_{e-1}=(0,\dots,1)$  be basis vectors for  $V$. We write elements of $\GL(W), \GL(V)$ as matrices relative to these bases.
By Lemma~\ref{lem:pa-shuffle},   the shuffle  $\sigma$  preserves the above identification of $[kn]$ with $W$,  and permutes the coordinates of a card by moving each entry $e$ places to the left (reading subscripts modulo $e+f$). Thus  $\sigma$  fixes $0 \sim (0,\dots,0)$ and acts as a linear map on $W$ corresponding to a permutation matrix, so $\sigma \in \GL(W)$.

Let $\tau \in P$, and consider the action of $\rho_\tau$ on a typical element $i=an+b$, as above, so $a \sim (a_0,\dots,a_{e-1})\in V$, and $i=an+b \sim
(a_0,\dots,a_{e+f-1})\in W$.  Then   $i^{\rho_\tau} = (an+b)^{\rho_\tau} = a^{\tau}n+b$ by \eqref{eq: rho}.  
If $\tau=t_v$ is a translation by some vector $v=(v_0,\dots,v_{e-1})$ in $V$, then  $a^{\tau} \sim (a_0,\dots,a_{e-1})+(v_0,\dots,v_{e-1})$ and hence $i^{\rho_\tau} \sim (a_0,\dots,a_{e+f-1}) + (v_0,\dots,v_{e-1},0,\dots,0)$, so $\rho_\tau$ is a translation by a vector of $W$.
 On the other hand, if $\tau \in \GL(V)$ corresponds to multiplication by a matrix $A$, then $a^\tau = aA= (a_0',\dots,a_{e-1}')$, say, and so $i^{\rho_\tau} \sim (a_0',\dots,a_{e-1}',a_e,\dots,a_{e+f-1})$ is the image of $(a_0,\dots,a_{e+f-1})$ under the matrix $\left [ \begin{array}{cc} A & 0 \\ 0 & I_f \end{array}\right ]$ in $\GL(W)$. Hence $\rho_\tau \in \GL(W)$. Thus $S = \langle T^\rho,\sigma\rangle  = 
 \langle \rho_\tau, \sigma \mid \tau \in T \rangle \leqslant \GL(W)$.
 
From the previous paragraph we have that, for each $r\in [e]$, if $\tau=t_{v_r}$ is the translation by $v_r$, then $\rho_{\tau}=t_{w_r}$ is the translation by $w_r$. We claim that the span $ \langle (V^{\rho})^{\sigma^i} \mid i\in [e+f] \rangle = W$. 
 For an arbitrary $i \in [e+f]$, write $i = pe+q$ with $q\in [e]$ and $p\in [f]$. Then Lemma~\ref{lem:pa-shuffle} implies that the conjugate
 \[
 (\rho_{t_{v_q}})^{\sigma^{-p}}=(t_{w_q})^{\sigma^{-p}} = t_{w_{q+pe}}= t_{w_{i}}
 \]
and this lies in $(V^\rho)^{\sigma^{-p}}=(V^\rho)^{\sigma^{e+f-p}}$. Hence the claim is proved. Now $\sh(P,n) = \langle V^\rho, T^\rho, \sigma \rangle \leqslant W S$,  and since $W \leqslant \sh(P,n)$, we have equality. This establishes part (1).

If $f=ce$, then $\sigma$ has order $c+1$ and the $c+1$ conjugates of $T^\rho$ under $\sigma$ pairwise commute. Hence $S \cong T \wr C_{c+1}$ is an imprimitive linear group, preserving a decomposition $W=\bigoplus_{i=1}^{c+1} V_i$ where each $V_i$ has dimension $e$ and is in bijection with $(V^\rho)^{\sigma^{-i}}$. Thus (2) holds.

Suppose now that $e \nmid f$ and that $T=\GL(V)$. Similarly to the proof of Proposition~\ref{prop:pa}(6), we use the fact that if $W$ can be written as $X+Y$ with $X \cap Y \neq \{0\}$, then, for the obvious embeddings of $\GL(X)$ and $\GL(Y)$ in  $\GL(W)$, we have $\GL(W) = \langle \GL(X), \GL(Y) \rangle$.  Since $e\nmid f$, there are integers $p, q$ with $1 \leqslant q \leqslant  e-1 $ such that  $e+f=pe+q$. Thus $q \equiv -pe \pmod{e+f}$ and we set 
$$
\begin{array}{cll} 
W_r &:= \langle w_{re}, \dots, w_{(r+1)e-1} \rangle  & \text{for }  r=0,1,\dots, p-1,\\
W_r' &:= W_0^{\sigma^{r}} = \langle w_{(p-r)e+q},\dots,w_{(p-r+1)e+q-1} \rangle  & \text{for }  r=1,\dots,p.
\end{array}
$$
For $r=0,\dots, p-1$, $W_r \cap W_{p-r}' $ contains $w_{re+q}$ and hence is non-zero,  and also, if $r\leqslant p-2$, then  
$W_{r+1} \cap W_{p-r}' $ contains $w_{(r+1)e}$ and hence is non-zero.  Now
 $(T^\rho)^{\sigma^{-r}}$ and $(T^\rho)^{\sigma^{p-r}}$ are isomorphic to and induce the full general linear groups on $W_r$ and $W_{p-r}'$ respectively. Hence, repeatedly applying the fact mentioned above to $W_0+W_p'$, $W_0+W_p' + W_1$, $W_0 + W_p'+W_1+W_{p-1}'$, etc., 
 we obtain $S = \GL(W)$, proving part (3).
\end{proof}

\begin{proof}[Proof of Theorem \ref{thm:productid}]
Part (1) follows from setting $P= \sym(\ell) \wr \sym(e)$ in Proposition~\ref{prop:pa}. Part (2) follows from setting $P = \AGL(e, \ell)$ in Proposition~\ref{prop:affine}.
\end{proof}

\begin{proof}[Proof of Corollary~\ref{cor:productid}]
We assume that $f$ is not a multiple of $e$ and that $k\neq 4$. In particular, this means that $e \geqslant 2$ and $e+f \geqslant 3$. Without loss of generality, we may assume that $\ell$ is not itself a proper power.  If $\ell \geqslant 5$ set $P=\sym(\ell)\wr\sym(e)$  and otherwise (i.e. if $\ell=2$ or $3$), set $P = \AGL(e,\ell)$.  Now, let $G = \sh(\sym(k),n)$    and $H = \sh(P, n)$. By Theorem~\ref{thm:productid}, if $\ell \geqslant 5$, then $H = \sym(\ell)\wr\sym(e+f)$, otherwise $\ell=2$ or $3$ and  $H = \AGL(e+f,\ell)$. Since $P \leqslant\sym(k)$, we have $H \leqslant G$.

\smallskip\noindent
\emph{Claim 1. $H \leqslant \alt(kn)$ if and only if $\ell$ is even; and $G\leqslant \alt(kn)$ if and only if $\ell$ is even.} \quad
If $\ell$ is even, then both $k$ and $n$ are even, and since $e \geqslant 2$, $k \equiv 0 \pmod{4}$. Hence by Corollary~\ref{cor:parity}(1) and (2),
 $G \leqslant \alt(kn)$ and hence also $H \leqslant \alt(kn)$.
Suppose now that $\ell$ is odd. We will show that $H \nleqslant \alt(kn)$, and hence also that $G \nleqslant \alt(kn)$. Since $\ell$ is odd, both $k$ and $n$ are odd. Further, if $\ell \geqslant 5$ then $P=\sym(\ell)\wr\sym(e)\not\leqslant \alt(\ell^e)$, and also if $\ell=3$ then $P= \AGL(e,3) \nleqslant \alt(3^e)$. Hence by Corollary~\ref{cor:parity}(3), $H \nleqslant \alt(kn)$.

\smallskip\noindent
\emph{Claim 2. One of the following occurs: \ \ (i) $G = H$; \ \ (ii) $G = \sym(kn)$ ($\ell$ odd) or $\alt(kn)$ ($\ell$ even)}.
Let $X = \sym(kn)$ if $\ell$ is odd or $\alt(kn)$ if $\ell$ is even. By Claim 1, $H \leqslant G \leqslant X$. If $H$ is maximal in $X$ then $G$ is as in (i) or (ii), so assume that $H < G < X$. If $\ell \geqslant 5$ then since   $e+f \geqslant 3$, it follows from \cite[Theorem, and Remark 2]{LPSon} that $H$ is maximal in $X$, contradiction. Thus $\ell = 2$ or $3$ and $H=\AGL(e+f,\ell)$. Then, since $e \geqslant 2$ and $k\ne 4$, again \cite[Theorem]{LPSon} implies that $H$ is maximal in $X$, a contradiction.

It remains to prove that case (i) of Claim 2 does not arise.

\smallskip\noindent
\emph{Case (i) of Claim 2 does not arise if $\ell \geqslant 5$.} Suppose that $G = H$. Let $\tau \in \sym(k)$ be a transposition. Then $\rho_\tau \in G$ has support size $2n = 2\ell^f$. Thus $\textrm{mindeg}_{[\ell]^{e+f}}(G) \leqslant 2\ell^f$. On the other hand, by Lemma~\ref{lem:mindeg}, $\textrm{mindeg}_{[\ell]^{e+f}}(G) = 2\ell^{e+f-1}$, and hence $e = 1$, which is a contradiction.

\smallskip\noindent
\emph{Case (i) of Claim 2 does not arise if $\ell=2$ or $3$.} Suppose that $\ell = 2$ or $3$ and that $G=H$ so $G = \AGL(e+f,\ell)$. Note that $G$ is not regular and consider a non-identity element $g \in G$ with at least one fixed point. Without loss of generality we may assume that $g$ fixes the zero vector of the vector space so that $g$ lies in  $\GL(e+f,\ell)$. The set of fixed points of $g$ forms a subspace, and so $\big|\supp_{[\ell]^{e+f}}(g)\big| = \ell^{e+f} - \ell^j$, for some $j\geqslant0$. Now choose $g = \rho_\tau$ for some $\tau \in \sym(k) $ moving $u$ points of $[k]$ with $u\geqslant2$. Then $\big|\supp_{[\ell^{e+f}]}(g)\big| = un = u\ell^f$, so $\ell^{e+f}-\ell^j=u\ell^f$ and hence $j\geqslant f$ and $\ell^e - \ell^{j-f}=u$.  If we choose $\tau$ to be a transposition in $\sym(k)$ then $u = 2$, which is coprime to $3$. Hence if $\ell=3$ then $j$ must be equal to $f$ and $3^e - 1 = 2$, which 
implies that $e=1$, contradiction. Thus $\ell=2$ and we have $2^e - 2^{j-f}=2$. Since $e\geqslant2$ this implies that $j=f+1$ and $2^e=4$, a contradiction since $k\ne 4$.
\end{proof}



\section{Primitivity of $\sh(P,n)$ for $k \geqslant 3$} \label{sec:prim}

The situation where $k = 2$ was completely dealt with in \cite{DGK} (see Theorem~\ref{dgk}). A first observation is that each group $\sh(\sym(2),n)$ is  imprimitive. On the other hand it follows from the first assertion of Theorem~\ref{thm:prim} (which we prove in this section) that 
$\sh(\sym(k),n)$ is primitive for each $k\geqslant 3$. In particular, to prove Theorem~\ref{thm:prim}, we only need to establish its first assertion.

Recall that $[u] = \{0, 1, \dots, u-1\}$ for $u$ a positive integer. 
For $a \in [k]$ and $b \in [n]$, define
	\begin{align}
	\mathcal{C}_a &= \{ an + b' \ | \ b'\in [n] \} \label{eq:col} \\
	\mathcal{R}_b &= \{ a'n + b \ | \ a'\in [k] \}. \label{eq:row}
	\end{align}
Thus the sets $\mathcal{C}_a$ are the piles of cards in $ [kn]$, and for each $b$, the set $\mathcal{R}_b$ consists of the $b$th card from each pile.

Let $P$ be a transitive subgroup of $\sym(k)$ and set $G = \sh(P,n)$. Recall that $G$ is a permutation group on $  [kn]$, and that the action of its subgroup $P^\rho$ on $\{ \mathcal{C}_a \ | \ a\in [k] \}$ is permutationally isomorphic to the action of $P$ on $[k]$. 
Our next result uses the notion of an orbital digraph: for $a \in [k] \setminus\{0\}$, the $P$-orbit on ordered pairs containing $\Delta = (0,a)^P$ is called an orbital, and its associated orbital digraph $\mathcal{G}(\Delta)$ has vertex set $[k]$ and set of directed edges $\Delta$. By definition $P$ acts arc-transitively on $\mathcal{G}(\Delta)$.

\begin{proof}[Proof of Theorem \ref{thm:prim}]
Assume for the sake of contradiction that $G$ is imprimitive on $[kn]$ for some $k\geqslant3, n\geqslant2$, and let $B$ be a nontrivial block of imprimitivity for $G$ containing $0$.  Recall that $\sigma \in G_0$. Thus $\sigma$ fixes $B$ setwise.

\medskip\noindent
\emph{Claim 1. If $\tau \in P$ is such that $0^\tau \ne 0$ in $[k]$ and $B^{\rho_\tau} = B$ in $[kn]$, then $\mathcal{R}_b \subseteq B$ for all $b \in B \cap \mathcal{C}_0$.} \quad To prove this let $b \in B \cap \mathcal{C}_0$ and $r \in [k]$, and set $a := 0^\tau$, and consider the orbital $\Delta := (0,a)^P$ in $[k]\times [k]$. Since $P$ is primitive on $[k]$, the orbital digraph $\mathcal{G}(\Delta)$ for $\Delta$ is connected, see \cite[Theorem 3.2A]{dixmort}. We shall prove that $rn + b \in B$ by induction on the distance $d = d_\Delta(0,r)$ from $0$ to $r$ in $\mathcal{G}(\Delta)$. This is true if $d = 0$ by assumption. Suppose then that $d \geqslant 1$, and that $sn + b \in B$ whenever $d_\Delta(0,s) \leqslant d-1$. Then there exists $t \in [k]$ such that $d_\Delta(0, t) = d-1$ and $d_\Delta(t, r) = 1$. Now  both $(0,a)$ and $(t,r)$ are arcs of $\mathcal{G}(\Delta)$, so for some $\delta \in P$ we have $0^\delta = t$ and $a^\delta = r$. By (\ref{eq: rho}), the image of $b \in B$ under $\rho_\delta$ is equal to $b^{\rho_\delta} = (0.n+b)^{\rho_\delta} = 0^\delta n + b = tn + b$, and since $tn + b \in B$ by induction, it follows that $B \cap B^{\rho_\delta}$ contains $tn + b$ and hence $\rho_\delta$ fixes $B$ setwise. Thus also $\rho_\tau\rho_\delta = \rho_{\tau\delta}$ fixes $B$ setwise, and hence $B$ contains $b^{\rho_{\tau\delta}}$. Again using (\ref{eq: rho}), 
	\[ B \ni b^{\rho_{\tau\delta}} = 0^{\tau\delta} n + b = a^\delta n + b = rn + b. \]
Thus, by induction, $B$ contains $rn+b$ for all $r \in [k]$, proving the claim.
  	
\medskip\noindent
\emph{Claim 2. If $\tau \in P$ is such that $0^\tau \ne 0$ in $[k]$, then $B \cap B^{\rho_\tau} = \varnothing$.} \quad Assume to the contrary that $0^\tau \neq 0$ in $[k]$, and $B^{\rho_\tau} = B$. Then $\mathcal{R}_0 \subseteq B$ by Claim 1. Since $\sigma$ fixes $B$ setwise, we have  $\mathcal{R}_0^\sigma  \subseteq B$, and hence, using \eqref{eq: s}, $\mathcal{R}_0^\sigma = \{0, \ldots, k-1\}  \subseteq B$. Setting $m_1 = \min\{n-1, k-1\}$ it follows again from Claim 1 that $\bigcup_{b=0}^{m_1} \mathcal{R}_b \subseteq B$. For $r \geqslant 1$, set $m_r := \min\{n-1,\,k^r-1\}$. Let $r$ be maximal such that $\bigcup_{b=0}^{m_r} \mathcal{R}_b \subseteq B$, and note that we have shown that this holds for $r=1$. If $m_r = n-1$ then $B$ contains $\bigcup_{b=0}^{n-1} \mathcal{R}_b =[kn]$, contradicting the fact that $B$ is nontrivial. Thus $m_r = k^r-1 < n-1$, and again using the fact that $\sigma$ fixes $B$ setwise, it follows that, for all $b$ satisfying $0 \leqslant b \leqslant k^r - 1$,  
	\[ 
	B \supseteq \mathcal R_b^\sigma = \left\{ (an+b)^\sigma \ | \ a \in [k] \right\} =	\left\{ bk+a \ | \ a \in [k] \right\}. 
	\]
In particular, $\left\{ 0, \ldots, k^{r+1}-1 \right\} \subseteq B$. If $k^{r+1}-1 \leqslant n-1$ then applying Claim 1, $\bigcup_{b=0}^{m_{r+1}} \mathcal R_b \subseteq B$, contradicting the maximality of $r$. Hence $k^{r+1}-1 > n-1$, so $m_{r+1} = n-1$, $B$ contains $\mathcal{C}_0$, and  applying Claim 1, $[kn] = \bigcup_{b=0}^{n-1} \mathcal R_b \subseteq B$, a contradiction.

\medskip\noindent
\emph{Claim 3. $B \subseteq \mathcal{C}_0$.} \quad Indeed, let $a \in [k] \setminus \{0\}$. Since $P$ is primitive and non-regular the only point fixed by $P_a$ is $a$, so some $\tau \in P_a$ does not fix $0$. Thus $B^{\rho_\tau} \cap B = \varnothing$ by Claim 2. This means that $\rho_\tau$ must move every point of $B$, and hence $B \subseteq \supp(\rho_\tau) = \bigcup_{r \in \supp(\tau)} \mathcal{C}_r$. Thus, since $\tau$ fixes $a$, we have  $B \cap \mathcal{C}_a = \varnothing$, and since this holds for all $a\in [k]\setminus\{0\}$, 
it follows that $B \subseteq \mathcal{C}_0$, proving the claim. 

To obtain a final contradiction, let $b$ be the maximum element of $B$. Since $B$ is nontrivial, and using Claim 3, we have $0 < b \leqslant n-1$. Now since $\sigma$ fixes $B$ setwise, $B$ contains $b^\sigma$, and by \eqref{eq: s}, $b^\sigma  = bk$. Note that $bk\leqslant kn-1$ since $b\leqslant n-1$, so $b^\sigma
=bk\in [kn]$. However $bk > b$ (since $b \ne 0$), contradicting the maximality of $b$. Thus no nontrivial blocks exist and therefore $G$ must be primitive.
\end{proof}

\section{$2$-transitivity of $\sh(P,n)$ when $k > n$} \label{sec:2trans}

Assume throughout this section that $k > n \geqslant 2$. For any $a \in [k]$ and $b \in [n]$ let $\mathcal{C}_a$ and $\mathcal{R}_b$ be as in \eqref{eq:col} and \eqref{eq:row}, respectively. In addition let
	\begin{align*}
	\mathcal{C}'_a &= \mathcal{C}_a \setminus \{an\} = \{ an + b \ | \ b \in [n] \} \\
	\mathcal{R}'_b &= \mathcal{R}_b \setminus \{b\} = \{ an + b \ | \ a \in [k] \}.
	\end{align*}

The next lemma proves the first assertion of Theorem~\ref{thm:2trans}.

\begin{lem} \label{lem: 2trans-k>n}
Suppose that $2 \leqslant n < k$. If $P$ is $2$-transitive then $\sh(P,n)$ is $2$-transitive.
\end{lem}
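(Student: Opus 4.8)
The plan is to use Lemma~\ref{lemma:transitive1} to get transitivity of $G = \sh(P,n)$ on $[kn]$ for free, and then to exploit the point stabiliser. Since $0^\sigma = 0$ by Lemma~\ref{lem:shuffle acts}, and since $0^{\rho_\tau} = 0$ for every $\tau$ fixing $0$ in $[k]$ (by \eqref{eq: rho}), the stabiliser $G_0$ contains $\sigma$ together with $\rho_\tau$ for all $\tau \in P_0$. Because $P$ is $2$-transitive, $P_0$ is transitive on $[k]\setminus\{0\}$; I would first transfer this to show that $G_0$ acts transitively on the set $\mathcal{R}'_0 = \{an \mid a \in [k]\setminus\{0\}\}$ of ``top cards'' of the nonzero piles, via the permutations $\rho_\tau$ with $\tau \in P_0$ (using \eqref{eq: rho}, $(an)^{\rho_\tau} = a^\tau n$). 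To prove $2$-transitivity it then suffices to show $G_0$ is transitive on $[kn]\setminus\{0\}$.

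The key step is to move between rows using $\sigma$ and within a row using the $\rho_\tau$. Here the hypothesis $n < k$ enters: since $k > n$, every card $i = an+b \in [kn]\setminus\{0\}$ with $b \neq 0$ satisfies $b \leqslant n-1 < k-1$, so by \eqref{eq: s} we can hit a ``row-$0$ card'' and then shuffle. More precisely, I would argue as follows. First, by Lemma~\ref{lemma:transitive1} applied to the transitive (indeed $2$-transitive) group $P$, and the claim inside its proof, there is $g \in G$ with $0^g = b$ for any prescribed $b \in [n]$; here the bound $m$ with $k^m \leqslant n-1 < k^{m+1}$ is $m = 0$ since $n-1 < k$, so in fact $g = \rho_{\tau}\sigma$ for a single $\tau \in P$ with $0^\tau = b$, and $g$ fixes $0$ because $\sigma$ does and $\rho_\tau$ fixes $0$ only if $0^\tau = 0$ — wait, that is not quite it, so instead I would observe directly: for $b \in [n]\setminus\{0\}$, pick $\tau \in P_0$ with $0^\tau = b$ (possible as $P_0$ is transitive on $[k]\setminus\{0\}$ and $b \neq 0$, $b \leqslant n-1 < k$), then $0^{\rho_\tau} = 0$ and $(bn)$ — hmm, I want a card in column $0$. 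Let me restructure: I would show $G_0$ is transitive on $\mathcal{C}'_0 = \{b \mid b \in [n]\setminus\{0\}\}$ first. For $b \in [n]\setminus\{0\}$, since $b \leqslant n-1 < k$, transitivity of $P_0$ on $[k]\setminus\{0\}$ gives $\tau \in P_0$ with $0^\tau = b$; then $\rho_\tau \in G_0$ and by \eqref{eq: rho} $(n)^{\rho_\tau}$—no. The cleanest route: apply $\sigma^{-1}$. From \eqref{eq: s}, $\sigma: bn+0 \mapsto 0\cdot k + b = b$ is wrong too. I will use $\sigma: i \mapsto (i \bmod n)k + \lfloor i/n\rfloor$: the card $an + 0 = an$ maps to $0\cdot k + a = a \in \mathcal{C}_0$. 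So $\sigma$ sends the top card of pile $a$ to position $a$ in pile $0$. Hence: $G_0$ is transitive on $\mathcal{R}'_0$ via $\{\rho_\tau : \tau \in P_0\}$, and $\sigma$ maps $\mathcal{R}'_0 \cup \{0\}$ bijectively onto $\{0,1,\dots,k-1\} \supseteq \mathcal{C}_0$ (since $n \leqslant k$); so $G_0$ is transitive on $\mathcal{C}'_0$.

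Finally, to reach an arbitrary card $an+b$ with $a \neq 0$: I would apply $\sigma^{-1}$ to it. Since $an + b \neq 0$ and (using $b < n < k$) one checks $a + bk \leqslant (k-1) + (n-1)k = kn-1$, the formula gives $\sigma^{-1}: an+b \mapsto$ the card $c$ with $ck + a \equiv$ ... — more simply, by Lemma~\ref{lem:shuffle acts} $\sigma$ is multiplication by $k$ mod $kn-1$, so some power of $\sigma$ can be used, or directly: $\sigma$ maps $bn+a$-type cards around. I would instead note $\sigma^{-1}(an+b)$ has the form $b'n + a'$ where applying $\sigma$ once more and iterating lands us, after at most finitely many steps (bounded since $n<k$ keeps ``carries'' from growing — exactly the mechanism in Lemma~\ref{lemma:transitive1}'s proof with $m=0$), on a card of $\mathcal{C}'_0$, which $G_0$ already reaches. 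Combining, $G_0$ is transitive on $[kn]\setminus\{0\}$, hence $G$ is $2$-transitive. \textbf{The main obstacle} I anticipate is organising the ``row-hopping'' cleanly: one must verify that under $n < k$ a single application of a suitable $\rho_\tau$ (with $\tau \in P_0$, landing within the required range because the relevant indices stay below $k$) followed by $\sigma^{\pm 1}$ suffices to connect any card to pile $0$, rather than needing the full base-$k$ expansion argument of Lemma~\ref{lemma:transitive1}; and one must handle separately the cards already in pile $0$ versus those in row $0$ versus the generic case, making sure every index stays in the valid range so that \eqref{eq: s} applies without reduction.
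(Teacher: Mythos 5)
Your opening moves coincide with the paper's: $G_0$ contains $\sigma$ and $\rho_\tau$ for $\tau\in P_0$; since $P_0$ is transitive on $[k]\setminus\{0\}$, the set $\mathcal{R}'_0$ lies in a single $G_0$-orbit $\Delta$, and applying $\sigma$ (using $(bn)^\sigma=b$ for $0<b<k$, which needs $n<k$) puts $\mathcal{C}'_0$ into $\Delta$ as well. The gap is the final step, which you yourself flag as ``the main obstacle'': connecting a generic card $an+b$ (with $a,b\neq 0$) to $\Delta$ using only elements that fix $0$. Your suggestion that iterating $\sigma^{\pm1}$ ``lands us, after finitely many steps, on a card of $\mathcal{C}'_0$'' is false as stated: $\sigma$ acts on $[kn]\setminus\{0,kn-1\}$ as multiplication by $k$ modulo $kn-1$, and the resulting cyclotomic coset need not meet $\mathcal{C}'_0\cup\mathcal{R}'_0$ -- the card $kn-1$ is fixed by $\sigma$, and for example with $k=5$, $n=3$ the card $7=2n+1$ satisfies $7k\equiv 7\pmod{14}$, so its $\sigma$-orbit is $\{7\}$, disjoint from $\mathcal{C}'_0\cup\mathcal{R}'_0=\{1,2,3,6,9,12\}$. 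So one must intersperse maps $\rho_\tau$ with $\tau\in P_0$, and proving that this mixed iteration links everything into one $G_0$-orbit is exactly the content that is missing. Your appeal to ``the mechanism of Lemma~\ref{lemma:transitive1} with $m=0$'' does not supply it: that proof moves the card $0$ using elements $\rho_{\tau_r}\sigma$ with $0^{\tau_r}=i_r$ typically nonzero, i.e.\ elements that do \emph{not} fix $0$ and hence are unavailable when working inside $G_0$.

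For comparison, the paper closes this gap as follows: from $2$-transitivity of $P$ it first shows $\mathcal{R}'_b\subseteq (n+b)^{G_0}$ for \emph{every} $b\in[n]$ (each punctured row is a single orbit with representative $n+b$), and then links these representatives into $\Delta=n^{G_0}$ by a case analysis on the size of $k$ relative to $n$: if $k\geqslant 2n$ the cards of pile $1$ are reached directly as $(in)^\sigma=i$ with $in\in\mathcal{R}'_0$; if $n+1<k<2n$ a two-stage argument through $\mathcal{R}_1$ is needed; and if $k=n+1$ one builds a second orbit $\Delta'=(n+1)^{G_0}$ containing $\bigcup_{b\geqslant 1}\mathcal{R}'_b$ and then shows $\Delta\cap\Delta'\neq\varnothing$ via $1^\sigma=k=n+1$. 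None of this is routine bookkeeping that follows from $n<k$ alone, so as it stands your proposal asserts rather than proves the key step.
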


\begin{proof}
Let $G = \sh(P,n)$, and let $\Delta = n^{G_0}$, the $G_0$-orbit containing $n$. We will show that $\Delta = [kn] \setminus \{0\}$, 
from which it follows that $G$ is $2$-transitive..

First observe that if $\tau \in P_0$ (in the action of $P$ on $[k]$), then $\rho_\tau$ fixes $\mathcal{C}_0$ pointwise, and so $\rho_\tau$ fixes $0$. In particular, since $P$ is 2-transitive, for any $a \in [k] \setminus \{0\}$ there exists $\tau \in P_0$ such that $1^\tau = a$. Hence, by \eqref{eq: rho}, we have $(n+b)^{\rho_\tau} = 1^\tau n + b = an + b$, which shows that $\mathcal{R}'_b \subseteq (n+b)^{G_0}$ for each $b \in [n]$, and in particular that $\mathcal{R}_0'\subseteq \Delta$. Also, recall from Lemma \ref{lem:shuffle acts} that $\sigma \in G_0$, and that for any $b \in [n] \setminus \{0\}$ we have, since $b < k$, that $(bn)^\sigma = b$ by \eqref{eq: s}.  It follows since $\Delta$ contains $\mathcal{R}_0'$ that $\Delta$ also contains 
\[
(\mathcal{R}_0')^\sigma = \{   (bn)^\sigma  \mid 0 < b < k\} \supset  \{   (bn)^\sigma  \mid 0 < b < n\} =  \{   b  \mid 0 < b < n\} = \mathcal{C}_0'.
\]

Suppose that $k \geqslant 2n$. Then, for $n \leqslant i \leqslant 2n-1$, we have $i \in \mathcal{C}_1$ and, since $n < k$, $in \in \mathcal{R}'_0 \subseteq \Delta$. Again $(in)^\sigma \equiv ikn \pmod{kn-1} = i$ by Lemma \ref{lem:shuffle acts}, and it follows, as above, that $\mathcal{C}_1 \subseteq \Delta$. Hence, for $b\leqslant n-1$, $n+b \in \Delta$ so $(n+b)^{G_0} = \Delta$, and we showed above that $\mathcal{R}'_b \subseteq  (n+b)^{G_0}$. Thus $\Delta$ contains $\cup_{b\in [n]}\mathcal{R}_b'$ as well as $\mathcal{C}_0'$, so $\Delta = [kn] \setminus \{0\}$. Therefore $G$ is 2-transitive on $[kn]$.

Finally suppose that $n+1 \leqslant k < 2n$, so $k = n+q$, where $1 \leqslant q \leqslant n-1$. Let $b \in [n]$. If $1 \leqslant b \leqslant q-1$ then $(n+b)n \in \mathcal{R}_0$, and by Lemma \ref{lem:shuffle acts}, $((n+b)n)^\sigma = n+b$. Since $\mathcal{R}'_0 \subseteq\Delta$, it follows that $n+b \in \Delta$ for $1 \leqslant b \leqslant q-1$, and hence that $\mathcal{R}'_b \subseteq\Delta$ for $1 \leqslant b \leqslant q-1$. Suppose first that $q \geqslant 2$.
 Then this gives in particular $\mathcal{R}'_1 \subseteq \Delta$, and as $\mathcal{C}'_0 \subseteq \Delta$ we have $\mathcal{R}_1 \subseteq \Delta$. 
Let $0 \leqslant a \leqslant n-q-1$ and set $b = q+a$.  Then $an+1 \in \mathcal{R}_1\subset \Delta$ so that $(an+1)^\sigma\in\Delta$, 
and by  \eqref{eq: s}, $(an+1)^\sigma =1\cdot k + a = n+q+a =  n+b$, so $n+b\in\Delta$.  Now $q \leqslant b \leqslant n-1$ so $n+b\in \mathcal{R}_b'$,
and hence $\mathcal{R}'_b \subseteq (n+b)^{G_0} = \Delta$. Since this is true for each $b=q,\dots, n-1$, it follows as in the previous case that
 $\Delta = [kn] \setminus \{0\}$, and $G$ is 2-transitive on $[kn]$.

This leaves the case where $k = n+1$. Let $\Delta':=(n+1)^{G_0}$. We have already shown that $\mathcal{C}'_0 \cup \mathcal{R}'_0 \subseteq \Delta$. 

\medskip\noindent
\emph{Claim. $\bigcup_{i=1}^{n-1} \mathcal{R}'_i \subseteq \Delta'$.} Let $1 \leqslant b \leqslant n-1$, so $b \in [k]$ and $n+b \in \mathcal{R}'_b \subseteq (n+b)^{G_0}$. Using \eqref{eq: s} we have $(n+b)^\sigma = bk+1 = bn + (b+1)$. Note that $b+1 \leqslant n < k$ so $b+1 \in [k]$, and hence $(n+b)^\sigma \in \mathcal{R}'_{b+1}$. Applying this for each $b \in \{1, \ldots, n-1\}$ shows that each $\mathcal{R}'_{b+1}$ is contained in $(\mathcal{R}'_1)^{G_0} \subseteq (n+1)^{G_0}=\Delta'$, proving the claim. 

Thus the union $\Delta\cup \Delta'=[kn]\setminus\{0\}$, and to complete the proof we simply need to find a single point in $\Delta\cap \Delta'$. Consider $1\in \mathcal{C}_0'\subset \Delta$. Then $\Delta$ also contains  $1^\sigma = k = n+1 \in \mathcal{R}'_b \subset \Delta'$. Thus the orbits $\Delta, \Delta'$ intersect nontrivially, and hence are equal. Thus $\Delta = [kn] \setminus \{0\}$, completing the proof.
\end{proof}

\begin{lem}[Theorem \ref{thm:2trans} \eqref{2tr-alt}] \label{lem:2trans-full}
Suppose that $2 \leqslant n < k$ and let $P = \sym(k)$ or $\alt(k)$. Then either $(k,n) = (4,2)$ and $\sh(P,n) = \AGL(3,2)$, or $(k,n) \neq (4,2)$ and $\sh(P,n) = \sym(kn)$ or $\alt(kn)$.  Moreover, $\sh(P,n) = \alt(kn)$ if and only if one of the following holds:
	\begin{enumerate}
	\item $n_{(4)} = 0$, or
	\item $n_{(4)} = 2$ and $k_{(4)} \in \{0,1\}$, or
	\item $n$ is odd, $P \leqslant \alt(k)$, and either $n_{(4)} = 1$ or $k_{(4)} \in \{0,1\}$;
	\end{enumerate}
and $\sh(P,n) = \sym(kn)$ if and only if one of the following holds:
	\begin{enumerate}
	\item[(4)] $n_{(4)}, k_{(4)} \in \{2,3\}$, or
	\item[(5)] $n$ is odd and $P = \sym(k)$.
\end{enumerate}
\end{lem}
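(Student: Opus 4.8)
The strategy is to combine the 2-transitivity established in Lemma~\ref{lem: 2trans-k>n} with a classification of the 2-transitive groups containing a large alternating group, and then to pin down the parity using Corollary~\ref{cor:parity}. First I would invoke Lemma~\ref{lem: 2trans-k>n}: since $P=\sym(k)$ or $\alt(k)$ is 2-transitive on $[k]$ and $k>n$, the group $G:=\sh(P,n)$ is 2-transitive on $[kn]$, hence primitive. Now $G$ contains the element $\rho_\tau$ for a $3$-cycle $\tau\in\alt(k)$ (this lies in $P$ in both cases); by \eqref{eq: rho} this $\rho_\tau$ is a product of $n$ disjoint $3$-cycles, so it has support size $3n$, which is small relative to $kn$ since $k>n$ forces $3n < 3k \le kn$ once $k\ge 3$. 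The plan is to use a ``small support element forces $\alt$'' theorem for primitive groups — e.g.\ a bound of Jordan/Babai/Liebeck--Saxl type, or for $3$-cycles directly Jordan's classical theorem that a primitive group containing a $3$-cycle contains the alternating group — applied to the \emph{power} of $\rho_\tau$ that is a single $3$-cycle; but $\rho_\tau$ itself is not a $3$-cycle, so instead I would feed the element of support $3n$ into a quantitative minimal-degree bound. Concretely: a primitive permutation group of degree $m$ that is not $\alt(m)$ or $\sym(m)$ has minimal degree growing like a positive constant times $m$ (and in fact, by results used elsewhere in this paper, much larger for $m$ large, cf.\ Lemma~\ref{lem:mindeg}-type estimates), whereas $3n/(kn)=3/k\to 0$; for the finitely many small $(k,n)$ not covered by the asymptotic bound I would appeal to a {\sc Magma} check or to an explicit list of 2-transitive groups.

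\textbf{Handling the exceptional case and the congruence statement.} The case $(k,n)=(4,2)$ gives degree $8$ and support $3\cdot 2=6$, which does \emph{not} force $\alt(8)$; here I would identify $\sh(P,2)$ directly, checking that $\sigma$ together with the $\rho_\tau$ for $\tau\in\sym(4)$ generates $\AGL(3,2)$ — one verifies that the relevant partition of $[8]$ into an affine structure is preserved (indeed $k=4=2^2$ and $n=2=2^1$ is \emph{not} a power of $4$, so this is exactly the affine phenomenon of Theorem~\ref{thm:productid}(3) with $\ell=2$, $e=2$, $f=1$, giving $\sh(\AGL(2,2),2)=\AGL(3,2)$, and $\sym(4)=\AGL(2,2)$). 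For all other $(k,n)$ with $k>n$, once we know $G\ge\alt(kn)$, the precise identification of $G$ as $\alt(kn)$ versus $\sym(kn)$ is purely a parity computation: $G=\alt(kn)$ iff $G\le\alt(kn)$, and the latter is characterised by Corollary~\ref{cor:parity}. So items (1)--(3) of the statement are immediate from Corollary~\ref{cor:parity} applied with $P\in\{\alt(k),\sym(k)\}$, noting $P\le\alt(k)$ holds iff $P=\alt(k)$. Items (4)--(5) are just the logical complement: $G=\sym(kn)$ iff none of (1)--(3) holds, and unwinding the three conditions of Corollary~\ref{cor:parity} in the case $P\in\{\alt(k),\sym(k)\}$ yields exactly ``$n_{(4)},k_{(4)}\in\{2,3\}$'' or ``$n$ odd and $P=\sym(k)$''. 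I would present (4)--(5) by a short case analysis on $n_{(4)}$, using that when $P=\alt(k)$ condition (3) of Corollary~\ref{cor:parity} is vacuously about $P\le\alt(k)$ being true, whereas when $P=\sym(k)$ and $n$ is odd none of the conditions can hold.

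\textbf{Main obstacle.} The crux is the step ``primitive $+$ an element of support $3n$ on $kn$ points $\Rightarrow$ contains $\alt(kn)$'' for \emph{all} $(k,n)$ with $k>n$, not merely asymptotically. The cleanest route is to observe that a power of $\rho_\tau$ is a single $3$-cycle is false, so one genuinely needs a minimal-degree theorem rather than Jordan's $3$-cycle theorem; the sharpest classical tool is the Liebeck--Saxl (building on Babai) classification of primitive groups of minimal degree $<m/3$, which leaves only $\alt/\sym$ in their natural or product action and a handful of small exceptions — and one must check none of those exceptions arises here except $(k,n)=(4,2)$. An alternative, which I would actually prefer for a self-contained argument, is to note that $P=\sym(k)$ or $\alt(k)$ acting on $[k]$ is $(k-2)$-transitive, hence by Lemma~\ref{lem: 2trans-k>n}'s method $G$ might be shown highly transitive enough, or more simply: take $\tau$ to be a $3$-cycle, so $\rho_\tau^{}$ fixes $\mathcal C_a$ pointwise for all $a$ outside a $3$-element set $\{a_1,a_2,a_3\}$, and restricted to $\mathcal C_{a_1}\cup\mathcal C_{a_2}\cup\mathcal C_{a_3}$ it is a ``coordinatewise $3$-cycle'' of the same shape on each of the $n$ rows; conjugating by suitable $\rho_{\tau'}$ and by $\sigma$ one should be able to manufacture a genuine $3$-cycle inside $G$ (moving three cards in a single row), after which Jordan's theorem finishes it. Making this manufacturing step work uniformly — especially ensuring we can isolate a single row, which is where the hypothesis $k>n$ and the explicit formula \eqref{eq: s} for $\sigma$ must be exploited — is the part I expect to require the most care.
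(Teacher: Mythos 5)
Your proposal is correct and follows essentially the paper's own argument: 2-transitivity from Lemma~\ref{lem: 2trans-k>n}, then an element $\rho_\tau$ of support $\mu(P)n$ played off against a minimal-degree lower bound for $2$-transitive groups not containing $\alt(kn)$ (the paper uses Bochert's bound $\geqslant kn/4-1$, reducing to $k\leqslant 14$), the remaining small cases checked in {\sc Magma}, and the parity statements read off from Corollary~\ref{cor:parity}. The only real divergence is the case $(k,n)=(4,2)$, which the paper settles by citing \cite[Table 2]{MM}; note that your appeal to Theorem~\ref{thm:productid}(3) via $\sym(4)=\AGL(2,2)$ covers $P=\sym(4)$ but not literally $P=\alt(4)$, which still requires a direct (e.g.\ computational) verification.
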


\begin{proof}
Let $G = \sh(P,n)$ where $P = \sym(k)$ or $\alt(k)$. If $(k,n)=(4,2)$ then it was found that $G=\AGL(3,2)$ in \cite[Table 2]{MM}. 
So we may assume that $(k,n) \neq (4,2)$. Suppose for a contradiction that $G \ngeqslant \alt(kn)$. Pick $1 \neq \tau \in P$ of minimal support $\mu(P)$ on $[k]$. Then the element $\rho_\tau$ of $G$ moves $\mu(P)n$ cards, and so
$\mu(G) \leqslant \mu(P)n$.
 Since $G$ is 2-transitive and doesn't contain the alternating group, it follows that $\mu(G) \geqslant (kn/4) - 1$ by a result of Bochert \cite{bochert} which can also be found with more details in de S\'eguier's book \cite[pp. 52--54]{deS}. Using the fact that $\mu(G) \leqslant \mu(P)n$ and rearranging, we have $k \leqslant 4\mu(P) + 4/n$. Now $\mu(P)=2$ or $3$ and since $n \geqslant 2$, we have $k \leqslant 14$. Since $2 \leqslant n < k \leqslant 14$, these cases may be checked by computer. We used \textsc{Magma} \cite{magma} which showed that $G = \alt(kn)$ or $G = \sym(kn)$ in the respective cases, which gives a contradiction. Hence $G$ contains $\alt(kn)$.  The conditions follow from Corollary \ref{cor:parity}.
\end{proof}

Recall that Burnside's Theorem states that a 2-transitive group is either of affine or almost simple type. Thus, both $P$ and the 2-transitive group $\sh(P,n)$ appearing in Lemma~\ref{lem: 2trans-k>n} have one of these types. In the remainder of this section we explore the connection between the type of $P$ and the type of $\sh(P,n)$. 

\begin{lem}[Theorem \ref{thm:2trans} \eqref{2tr-P-AS}] \label{lem:2trans-aff}
Suppose that $2 \leqslant n < k$, and suppose that $P$ is $2$-transitive on the set of piles. If $\sh(P,n)$ is affine then $P$ is affine.
\end{lem}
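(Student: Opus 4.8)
The plan is to prove the contrapositive: assuming $G:=\sh(P,n)$ is affine I must show $P$ is affine, and I will argue by contradiction, so suppose $P$ is not affine. By Lemma~\ref{lem: 2trans-k>n} the group $G$ is $2$-transitive, and so is $P$; hence Burnside's theorem makes $P$ almost simple. Since $G$ is affine, $kn$ is a prime power $p^d$, and as $k,n>1$ this forces $k=p^a$, $n=p^b$; because $n<k$ we get $a>b\geqslant 1$, so $a\geqslant 2$ and $d=a+b\leqslant 2a-1$. Being $2$-transitive and affine, $G$ has a unique minimal normal subgroup $V\cong C_p^d$, which is regular, and $G=V\rtimes G_0$ with $G_0\leqslant\GL(d,p)$.

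The crucial first step is that $P^\rho\cap V=1$: this intersection is an abelian normal subgroup of $P^\rho\cong P$, but in an almost simple group every nontrivial normal subgroup contains the nonabelian socle, so it must be trivial. Composing $\rho$ with the quotient map $G\to G/V\cong G_0$ therefore embeds $P$ into $\GL(d,p)$; that is, \emph{$P$ has a faithful $\mathbb F_p$-representation of degree $d\leqslant 2a-1$, where $p^a=k$ is the degree of $P$.} I would next dispose of the case $\alt(k)\leqslant P$: here $(k,n)\neq(4,2)$ — if $k=4$ and $a\geqslant 2$ then $n=2$ and $(k,n)=(4,2)$, but the only $2$-transitive groups of degree $4$ are $\alt(4)$ and $\sym(4)$, which are affine, contrary to assumption — so Lemma~\ref{lem:2trans-full} gives $\sh(P,n)\in\{\alt(kn),\sym(kn)\}$ with $kn\geqslant 16$, and such a group has no regular normal subgroup, contradicting that $G$ is affine. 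Hence $\alt(k)\not\leqslant P$.

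With $P$ almost simple, $2$-transitive of proper prime-power degree $k=p^a$ and $\alt(k)\not\leqslant P$, I would invoke the classification of finite $2$-transitive groups listed in Section~\ref{sec:2trans}. After a short Diophantine analysis of which degrees in that list are proper prime powers, the surviving possibilities all satisfy $\PSL(d_0,q)\leqslant P\leqslant\PGamL(d_0,q)$ for some $d_0\geqslant 2$ and prime power $q$ with $(q^{d_0}-1)/(q-1)=k$ — this forces $q$ to be a Mersenne prime and $k=2^a$, or $q=8$ and $k=9$, or (for $d_0\geqslant 3$) one of a handful of values, most notably $\PSL(5,3)$ of degree $121$. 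In every such case the defining characteristic of $\PSL(d_0,q)$ differs from $p$, so the faithful $\mathbb F_p$-representation of $P$ produced above is a cross-characteristic representation, whose degree is bounded below by the classical lower bounds for cross-characteristic representation degrees; these exceed $2a-1$ in every case \emph{except} $P=\PSL(2,7)$ acting on $8$ points, where $p=2$, $a=3$ and $\PSL(2,7)\cong\PSL(3,2)$ genuinely has a faithful $3$-dimensional $\mathbb F_2$-module. That exceptional case, with $n\in\{2,4\}$, I would settle directly by checking (computationally) that $\sh(\PSL(2,7),2)$ and $\sh(\PSL(2,7),4)$ contain the relevant alternating group and so are not affine; this contradiction completes the proof. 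The main obstacle is exactly this endgame: the representation-degree comparison has to be made explicit across the infinite Mersenne family $\PSL(2,q)$ (where the smallest faithful $\mathbb F_2$-degree has size roughly $2^{a-1}$, dominating $2a-1$ once $a\geqslant 5$, while $a=3$ is precisely the genuine exception), and the group $\PSL(2,7)$ on $8$ points must be handled by an ad hoc argument rather than by the uniform obstruction.
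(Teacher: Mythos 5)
Your argument is correct, and its engine is the same as the paper's: having observed that the translation subgroup $V=\soc(\sh(P,n))$ meets (the image of) an almost simple $P$ trivially, you embed $P$ into $\GL(d,p)$ with $d=a+b\leqslant 2a-1$ and play the cross-characteristic representation bounds of \cite[Theorem 5.3.9]{KL} against this small dimension, finishing the genuine exception $\PSL(2,7)$ of degree $8$ (with $n\in\{2,4\}$) by computation, exactly as the paper does. Where you differ is the reduction to a short list of candidates: the paper applies Guralnick's theorem \cite{guralnickppower} on subgroups of prime-power index to the socle $T$ (using only that $T$ is transitive of degree $p^e$), which produces the extra cases $T\in\{\PSL(2,11),M_{11},M_{23}\}$ of prime degree, $T=\PSU(4,2)$ of degree $27$, and $T=\PSL(2,8)$ of degree $9$, the latter two being eliminated by {\sc Magma}; you instead apply the classification of almost simple $2$-transitive groups (Table~\ref{table:2transAS}) to $P$ itself, combined with the observation that $k=p^a$ is a \emph{proper} prime power (since $2\leqslant n=p^b<k$ forces $a\geqslant 2$). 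That observation removes the prime-degree candidates and the degree-$27$ case without any computation, and your sharper use of the $q$ even case of the bound (minimal cross-characteristic degree $q-1=7$ for $\PSL(2,8)$ versus $d=3$) also removes the $k=9$, $n=3$ case that the paper checks by machine. The price is a heavier classification input plus the Diophantine step: the claim that $(q^{d_0}-1)/(q-1)=p^a$ with $d_0\geqslant 3$ and $a\geqslant 2$ forces $(q,d_0)=(3,5)$ is the Estes--Guralnick--Schacher--Straus result \cite{guralnickeqns} (the $d_0=2$ subcases, Mersenne $q$ and $(q,k)=(8,9)$, are elementary), and one must also verify -- routinely -- that none of the Suzuki, Ree, unitary, symplectic or sporadic rows of the table has degree a proper prime power; alternatively the $d_0\geqslant 3$ cases could be killed directly by the representation bound, as the paper does, without solving the equation. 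Both routes are sound; yours trades two {\sc Magma} checks and Guralnick's index theorem for the full $2$-transitive classification and the prime-power equation.
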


\begin{proof}
Suppose that $\sh(P,n)$ is affine, so that there is a prime $p$ and integers $e$ and $f$ such that $k = p^e$ and $n = p^f$, with $e > f$ since $k > n$. Let us assume for a contradiction that $P$ is almost simple, and let $T = \mathrm{soc}(P)$. So $T$ is a nonabelian finite simple group; moreover, $T$ is a transitive subgroup of $\sym(k$) since $T$ is normal in $P$ and $P$ is 2-transitive. Hence, for any stabiliser $H$ in $T$ (in the action of $T$ on $[k]$), we see that $|T:H| = k = p^e$. Thus, by \cite[Theorem 1]{guralnickppower}, $T$ and $H$ belong to one of five cases, which we consider separately below.

\medskip\noindent
\emph{Case (a). $T = \alt(k)$ and $H \cong \alt(k-1)$, where $k \geqslant 5$.} Then $P = \alt(k)$ or $\sym(k)$. Since $k \geqslant 5$, $\sh(P,n) = \alt(kn)$ or $\sym(kn)$ by Lemma \ref{lem:2trans-full}. However $kn \geqslant 5$, so neither $\alt(kn)$ nor $\sym(kn)$ is affine, a contradiction.
 
\medskip\noindent
\emph{Cases (c) and (d). $T = \PSL(2,11)$ and $H \cong \alt(5)$; or $T = M_{23}$ and $H \cong M_{22}$; or $T = M_{11}$ and $H \cong M_{10}$.} In these cases the degree of $T$ is prime, and so $k = p \in \{11,23\}$. Since $p^f = n < k = p$, this gives a contradiction.

\medskip\noindent
\emph{Case (e). $T = \PSU(4,2)$ and $|T:H| = 27$.} Since $k = 27$ either $n=3$ or $n=9$. Using \textsc{Magma} \cite{magma} we see that $\sh(P,n)$ contains $\alt(kn)$, a contradiction.

\medskip\noindent
\emph{Case (b). $T = \mathrm{PSL}(d,q)$ for some prime power $q$ and prime $d$, and $H$ is the stabiliser of a line or hyperplane.} In this case $k\geqslant 5$ and
	\begin{equation} \label{eq:p and q}
	p^e = k = |T:H| = \frac{q^d-1}{q-1}.
	\end{equation}
Let $V = \soc(\sh(P,n))$, so that $V$ is elementary abelian of order $p^{e+f}$. Since $T \leqslant \sh(P,n)$, and $T$ is simple, we have $T \cap V = 1$. Hence $T$ embeds into $\sh(P,n)/V$, which can be identified with a subgroup of $\mathrm{GL}(e+f,p)$. By \eqref{eq:p and q} we must have $p \neq q$. Thus we have a coprime representation of $T$, and so by \cite[Theorem 5.3.9]{KL} we have 
	\[ 
	e+f \geqslant \begin{cases} q^{d-1} - 1 &  \text{if $d \geqslant 3$}, \\ \dfrac{q-1}{(2,q-1)} & \text{if $d = 2$}.\end{cases} 
	\]
Suppose first that $d \geqslant 3$. If $q \geqslant 3$ then
	\[ 
	q^{d-1} - 1  \geqslant \frac{1}{2} \left( \frac{q^d - 1}{q-1} \right) = \frac{p^e}{2}, 
	\]
and so $p^e \leqslant 2e + 2f$. Recall that $e > f$; hence $p^e < 4e$ and since $k\geqslant5$ this implies that $p = 2$ and $k = p^e =8$. Now, since $q \geqslant 3$ and $d \geqslant 3$ we obtain from \eqref{eq:p and q} that
	\[ 
	k =  q^{d-1} + \dots + q + 1 \geqslant 3^2 + 3 + 1 > 8, 
	\]
a contradiction. If $q = 2$ then by \eqref{eq:p and q} we have 
	\[ 
	p^e = 2^d - 1 = 2\big( 2^{d-1} - 1 \big) + 1 \leqslant 2(e+f) + 1 < 4e 
	\]
also a contradiction. Hence  $d = 2$, and by \eqref{eq:p and q} we have $p^e = q+1$, so that $p^e = (q-1) + 2 \leqslant 2(e+f) + 2 \leqslant 4e$. Since
$k\geqslant5$, this implies that $e \geqslant 2$, and hence $(p,e) \in \{ (2,3), (2,4), (3,2) \}$. This gives $k = p^e = 8,16,9$. Since $k = q+1$, and $q$ is a prime power, we have $(k,q) \in \{(8,7), (9,8)\}$. Thus either $k = 8$, $n =2$ or $4$, and $T = \PSL(2,7)$, or $k = 9$, $n = 3$, and $T = \PSL(2,8)$. In these cases, we use {\sc Magma} to verify that $\sh(T,n)$ contains $\alt(kn)$, and since $\sh(T,n) \leqslant \sh(P,n)$, this contradicts $\sh(P,n)$ being affine. This completes the proof.  
\end{proof}

\subsection{The case where $P$ is affine and $\sh(P,n)$ is almost simple}

It follows from Lemma \ref{lem:2trans-aff} that the $2$-transitive group $\sh(P,n)$ is almost simple whenever $P$ is $2$-transitive and almost simple. If $P$ is affine, then by Proposition~\ref{prop:affine} the group $\sh(P,n)$ is affine whenever the power case holds, that is, whenever $k$ and $ n$ are powers of the same prime. We next consider the case where $P$ is $2$-transitive and affine but the power case does not hold, and determine which $2$-transitive almost simple group could arise as $\sh(P,n)$. Let us fix the following notation for the remainder of this section:
\begin{itemize}
\item $2 \leqslant n < k$ and $k=p^e$ for some prime $p$;
\item $P$ is a 2-transitive affine group of degree $k$;
\item $G := \sh(P,n)$ and $T:=\mathrm{soc}(G)$ is a finite nonabelian simple group.
\end{itemize}
We observe that  $kn$ is not prime, and so
	\begin{equation} \label{eq:kn}
	6 \leqslant p^e n = kn = m < k^2 = p^{2e},
	\end{equation}
and since $P$ is $2$-transitive, $|P|$ is divisible by $p^e(p^e-1)$. Further, by Lemma~\ref{lem: 2trans-k>n}, $G$ is 2-transitive. 

The classification of $2$-transitive almost simple groups is complete and the list of possible groups can be found in \cite[Table 7.4]{cameronpermgroups}; it consists of seven infinite families, which are listed in Table \ref{table:2transAS}, and several sporadic examples which we can deal with immediately.

\begin{table}[ht]
\begin{center}
\begin{tabular}{rllll}
 & $T=\soc(G)$	& degree $m$ of $G$ & Conditions & $|G/T|\leqslant$ \\
\hline
\small\sf 1 & $\Sp(2d,2)$	& $2^{d-1}(2^d + 1)$		& $d \geqslant 3$								& $1$ \\
\small\sf 2 & $\Sp(2d,2)$	& $2^{d-1}(2^d - 1)$		& $d \geqslant 3$								& $1$ \\
\small\sf 3	& $\Sz(q)$		& $q^2+1$								& $q=2^f$, $f \geqslant 3$ odd	& $f$	\\
\small\sf 4	& $\Ree(q)$		& $q^3+1$								& $q=3^f$, $f$ odd							& $f$	\\
\small\sf 5	& $\PSU(3,q)$	& $q^3+1$								& $q=q_0^f$, $q_0$ prime				& $(3,q+1)2f$ \\
\small\sf 6	& $\PSL(d,q)$	& $(q^d-1)/(q-1)$				& $q=q_0^f$, $q_0$ prime				& $(d,q-1)f$ 	\\
\small\sf 7 & $\alt(m)$ 	& $m$										& $m \geqslant 5$								& $2$
\end{tabular}
\end{center}
\caption{Infinite families of almost simple $2$-transitive groups $G$.}
\label{table:2transAS}
\end{table}

\begin{lem}
\label{lem:must be in table}
If $G$  is almost simple, then $T$ must appear in Table~$\ref{table:2transAS}$.
\end{lem}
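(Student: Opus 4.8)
The plan is to exploit the classification of $2$-transitive almost simple groups that has already been recalled: any such $G$ has socle $T$ appearing either in Table~\ref{table:2transAS} or in the short list of sporadic examples referred to from \cite[Table 7.4]{cameronpermgroups}. So the task reduces to ruling out the sporadic cases, where $T$ is one of a small explicitly known set of simple groups (the Mathieu groups $M_{11},M_{12},M_{22},M_{23},M_{24}$ in their natural degrees, $M_{11}$ of degree $11$ and $12$, $\PSL(2,11)$ of degree $11$, $\PSL(2,q)$ for the two exceptional small degrees, $A_7$ of degree $15$, $\mathrm{HS}$ of degree $176$, $\mathrm{Co}_3$ of degree $276$, and a couple of others). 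For each such sporadic case the degree $m = kn$ is a fixed integer, which tightly constrains $(k,n)$.

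First I would recall from \eqref{eq:kn} that $m = kn$ with $k = p^e$ a prime power, $n < k$, and $n \geqslant 2$; in particular $m$ is not prime and $m < k^2$. For each sporadic degree $m$ in the list, I would enumerate the factorisations $m = kn$ with $k$ a prime power exceeding $n \geqslant 2$. Several degrees ($11$, $23$, $176 = 2^4\cdot 11$, $276 = 2^2\cdot 3\cdot 23$, etc.) are either prime (immediately excluded since $kn$ is not prime) or have no factorisation $kn$ with $k$ a prime power and $k > n \geqslant 2$; those cases vanish at once. The remaining degrees leave only finitely many candidate pairs $(k,n)$, and for each of these $k$ is small, so $P$ is a $2$-transitive affine group of small degree $p^e$, of which there are only finitely many (explicitly: subgroups of $\AGL(e,p)$ that are $2$-transitive).

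For the finitely many surviving pairs $(P,n)$ I would simply compute $\sh(P,n)$ directly — this is exactly the kind of finite check the authors perform elsewhere (e.g.\ in Lemmas~\ref{lem:2trans-full} and~\ref{lem:2trans-aff}) using {\sc Magma} \cite{magma} — and verify that in every case $\sh(P,n)$ is not an almost simple group with socle one of the sporadic $T$'s: typically $\sh(P,n)$ will turn out to contain $\alt(kn)$, or to be affine, contradicting the hypothesis that $G$ is almost simple with socle a sporadic simple group. Since $A_m = \alt(m)$ does appear as line~7 of Table~\ref{table:2transAS}, finding $\sh(P,n) \geqslant \alt(kn)$ is consistent with the conclusion of the lemma, so the point is only to exclude the genuinely sporadic socles.

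The main obstacle is not conceptual but bookkeeping: one must be careful to have the complete and correct list of sporadic $2$-transitive almost simple groups with their degrees (so as not to miss a case), and then to correctly enumerate, for each such degree $m$, all factorisations $m = kn$ with $k$ a prime power and $k > n \geqslant 2$. Once that finite list is in hand, the verification is routine. I would therefore organise the proof as: (i) quote the classification, splitting into Table~\ref{table:2transAS} cases and sporadic cases; (ii) for each sporadic degree $m$, list the admissible $(k,n)$ using the constraints from \eqref{eq:kn}, discarding those with no valid factorisation; (iii) for each remaining $(k,n)$ enumerate the $2$-transitive affine $P \leqslant \AGL(e,p)$ of degree $k$ and compute $\sh(P,n)$, in each case obtaining a group that is not almost simple with a sporadic socle; (iv) conclude that $T$ must lie in Table~\ref{table:2transAS}.
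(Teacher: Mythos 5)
Your proposal matches the paper's proof: the authors likewise reduce to the sporadic cases (rows 8--18 of \cite[Table 7.4]{cameronpermgroups}), use the constraints $k=p^e$, $2\leqslant n<k$ and $kn=m$ not prime to pin down the admissible $(k,n)$ for each sporadic degree, and then verify by a {\sc Magma} computation of $\sh(P,n)$ that each surviving case yields a contradiction (e.g.\ for degree $276$ one gets $k=23$, $n=12$ and $\sh(P,n)\supseteq\alt(276)$). The only caveat, which you already flag, is that the argument depends on having the complete and correct list of sporadic $2$-transitive degrees from the cited table.
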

\begin{proof}
If $G$ is almost simple and $T=\soc(G)$ does not appear in Table~\ref{table:2transAS} then $G$ must appear in one of rows 8--18 of \cite[Table 7.4]{cameronpermgroups}. Since $k = p^e$, $n < k$, and $kn$ is the degree of the group $\sh(P,n)$, clearly $kn$ is not prime. In each case, there is a unique choice for $k$ which in turn defines $n$, and we are then able to use {\sc Magma} \cite{magma} to calculate $\sh(P,n)$. We obtain a contradiction in each case. For example, if $\sh(P,n)= Co_3$ with  $kn= 276$ then $k = 23$, $n=12$ and $P$ is an affine group of degree $k$. The computer calculation shows that $\sh(P,n)$ contains $\alt(276)$, a contradiction.
\end{proof}

In Lemmas \ref{lem:2-trans Sp}--\ref{lem: 2-trans PSL} we consider the groups in rows 1--6 of Table~\ref{table:2transAS} and eliminate all but the short list given in Table~\ref{table:2transAS-ii} below. These remaining groups are treated in Theorem~\ref{thm:finish 2-trans}. First we show that $P^\rho$ intersects $T$ nontrivially.

\begin{lem} \label{lemma:socP}
If one of the rows of Table~$\ref{table:2transAS}$ holds for $G = \sh(P,n)$ and $T = \soc(G)$, then $P^\rho \cap T \geqslant \soc(P^\rho) = C_p^e$.
\end{lem}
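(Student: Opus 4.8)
The plan is to show that the socle $C_p^e=\soc(P^\rho)$ of the affine $2$-transitive group $P^\rho\leqslant G$ must be contained in $T=\soc(G)$, by a normal-subgroup argument inside $G$. First I would recall that $P^\rho$ is $2$-transitive (hence primitive) on the set $\{\mathcal{C}_a\mid a\in[k]\}$ of $k$ piles, and that since $P$ is affine its socle $N:=\soc(P^\rho)$ is the regular elementary abelian group $C_p^e$ acting on that set. The key point is that $N$, as a subgroup of $G$, interacts with the normal subgroup $T\trianglelefteq G$: I would look at $N\cap T$ and $NT$, both normal in $NT$, and exploit the simplicity of $T$ together with the structure of $N$.

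The main step is to rule out $N\cap T=1$. If $N\cap T=1$, then $N$ embeds into $G/T\leqslant\aut(T)$, and $[N,T]\leqslant N\cap T=1$, so $N$ centralises $T$. But $G$ is $2$-transitive, hence primitive, so its socle is its unique minimal normal subgroup; since $T=\soc(G)$ is nonabelian, $C_G(T)=1$ (the centraliser of the socle in a primitive group is trivial when the socle is nonabelian — see \cite[Theorem 4.3B]{dixmort}). This forces $N=1$, contradicting $|N|=p^e=k\geqslant 2$. Hence $N\cap T\neq 1$. Finally, since $N$ is a minimal normal subgroup of $P^\rho$ (indeed an irreducible module for the point stabiliser) and $N\cap T$ is normalised by the point stabiliser in $P^\rho$ acting on $N$ — here I would use that $P^\rho$, being $2$-transitive and affine, acts irreducibly on $N\cong\mathbb F_p^e$, so the only subgroups of $N$ invariant under a point stabiliser are $1$ and $N$ — I conclude $N\cap T=N$, i.e. $N\leqslant T$, which is exactly the assertion $\soc(P^\rho)=C_p^e\leqslant P^\rho\cap T$.

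I expect the main obstacle to be the final irreducibility step: one must be careful that the subgroup $N\cap T$ of $N$ is genuinely invariant under enough of $P^\rho$ to apply irreducibility. Since $T\trianglelefteq G$ and $N\leqslant P^\rho\leqslant G$, the group $N\cap T$ is normal in $N$ but a priori only normalised by $N_G(N)\cap N_G(T)\supseteq N_{P^\rho}(N)=P^\rho$ (as $N$ is normal in $P^\rho$) intersected with $N_G(T)=G$; so in fact $P^\rho$ normalises both $N$ and $T$, hence normalises $N\cap T$. Thus $N\cap T$ is a $P^\rho$-invariant subgroup of the irreducible $\mathbb F_p[P^\rho/N]$-module $N$ (irreducibility because the stabiliser in the affine $2$-transitive group $P^\rho$ acts transitively on $N\setminus\{0\}$ or at least irreducibly — this is part of the classification of affine $2$-transitive groups), forcing $N\cap T\in\{1,N\}$; combined with the previous paragraph, $N\cap T=N$. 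A minor point to check is that $N$ is exactly $\soc(P^\rho)$ and not merely contained in it, but for affine $2$-transitive groups the socle is precisely the translation subgroup $C_p^e$, so this is immediate.
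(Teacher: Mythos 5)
Your reduction at the end is fine (and matches the paper in spirit): since $T\trianglelefteq G$ and $N:=\soc(P^\rho)\trianglelefteq P^\rho$, the intersection with $T$ is a $P^\rho$-invariant subgroup, so by irreducibility (or, as the paper argues, because $\soc(P^\rho)$ is the \emph{unique} minimal normal subgroup of $P^\rho$) one gets the dichotomy ``$N\leqslant T$ or the intersection is trivial''. The gap is in your main step, ruling out the trivial intersection. You claim that $N\cap T=1$ forces $[N,T]\leqslant N\cap T=1$, hence $N\leqslant \mathrm{C}_G(T)=1$. But the containment $[N,T]\leqslant N\cap T$ requires \emph{both} subgroups to be normal in $\langle N,T\rangle$; here only $T$ is normal in $G$, so all you get is $[N,T]\leqslant T$, and $N$ need not centralise $T$ at all. (Compare $G=\sym(5)$, $T=\alt(5)$, $N=\langle(1\,2)\rangle$: then $N\cap T=1$ but $[N,T]\neq 1$.) So the conclusion $N\leqslant T$ does not follow from your argument.

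A symptom of the problem is that your proof never uses the hypothesis that $T$ lies in Table~\ref{table:2transAS}, whereas that hypothesis carries the real content. The paper's route is: if $P^\rho\cap T$ does not contain $\soc(P^\rho)$, then (uniqueness of the minimal normal subgroup) $P^\rho\cap T=1$, so the \emph{whole} group $P^\rho$ embeds into $G/T\leqslant\mathrm{Out}(T)$; hence $|G/T|\geqslant p^e(p^e-1)$ and $G/T$ is noncyclic and nonabelian. This is then contradicted row by row: in rows 1--2 the quotient is trivial, in rows 3, 4 and 7 it is cyclic, and in rows 5--6 one needs the numerical estimates coming from $kn=m<k^2=p^{2e}$ and $f\leqslant q/2$ (e.g.\ $\tfrac{p^{2e}}{2}<p^e(p^e-1)\leqslant|G/T|\leqslant (d,q-1)f\leqslant \tfrac{q^2}{2}$ in row 6). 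Some quantitative input of this kind is unavoidable: an almost simple group can certainly contain subgroups with elementary abelian socle meeting $T$ trivially, so the statement cannot be obtained by purely formal normality/centraliser arguments. To repair your proof, replace the centraliser step by the embedding $P^\rho\hookrightarrow G/T$ and carry out the case analysis against Table~\ref{table:2transAS}.
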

\begin{proof}
This is trivially satisfied if row 1 or 2 of the table holds, since in this case $G = T$.

Suppose for the sake of contradiction that $P^\rho \cap T \ngeqslant \soc(P^\rho)$. Since $\soc(P)$ is the unique minimal normal subgroup of $P $ and $P \cong P^\rho$, we then have $P^\rho \cap T = 1$. Hence $P^\rho \cong P^\rho T/T \leqslant G/T$, and in particular $|G/T| \geqslant p^e(p^e-1)$.  Since $p^e \geqslant 3$, the 2-transitive group $P$ is not cyclic, and so $G/T$ is not cyclic. This rules out rows 3, 4, and 7. Consider next row 6. Now $G/T = C_r . C_s$ with $r$ dividing $(d,q-1)$ and $s$ dividing $f$.  Since the $2$-transitive group $P$ is nonabelian and isomorphic to a subgroup of $G/T$, we must have that $r \geqslant 3$, and thus $d\geqslant 3$.  Hence $q^2 \leqslant q^{d-1} < m < k^2 = p^{2e}$. On the other hand, we have $k=p^e > 2$, and since  $q=q_0^f$ we have $f \leqslant \frac{q}{2}$, so 
	\begin{equation} \label{eq:p^2e}
	\frac{p^{2e}}{2} < p^e(p^e-1) \leqslant |G/T| \leqslant (d,q-1)f \leqslant \frac{q^2}{2},
	\end{equation}
which gives $q^2 < p^{2e} < q^2$, a contradiction.

%

It remains to deal with row 5, and, since $T$ is nonabelian simple, we must have $q \geqslant 3$. 
Since $m=q^3+1\geqslant28$ it follows from \eqref{eq:kn} that $p^e\geqslant6$. Then, again using $f \leqslant q/2$ and \eqref{eq:kn},
	\[ 
	q^3 < m < p^{2e} < 2p^e(p^e-1) \leqslant 2|G/T| \leqslant 2(3,q+1)2f \leqslant 12f \leqslant 6q, 
	\]
which has no solutions with $q \geqslant 3$. Thus $P^\rho \cap T \geqslant \soc(P^\rho)$. 
\end{proof}

\begin{table}[ht]
\begin{center}
\begin{tabular}{llrr}
  & $T$ & $k$ & $n$ \\
\hline
\sf\small 1 & $\Sp(6,2)$ & $7$ & $4$ \\
\sf\small 2 & $\Sp(6,2)$ & $9$ & $4$ \\
\sf\small 3 & $\Sz(8)$ & $13$ & $5$ \\
\sf\small 4 & $\PSU(3,4)$ & $13$ & $5$ \\
\sf\small 5 & $\PSL(2,5)$ & $3$ & $2$ \\
\sf\small 6 & $\PSL(2,7)$ & $4$ & $2$ \\
\sf\small 7 & $\PSL(2,9)$ & $5$ & $2$ \\
\sf\small 8 & $\PSL(2,11)$ & $4$ & $3$ \\
\sf\small 9 & $\PSL(2,27)$ & $7$ & $4$ \\
\sf\small 10 & $\PSL(2,32)$ & $11$ & $3$ \\
\sf\small 11 & $\PSL(2,64)$ & $13$ & $5$ \\
\sf\small 12 & $\PSL(3,4)$ & $7$ & $3$ \\
\sf\small 13 & $\PSL(4,2)$ & $5$ & $3$ \\
\sf\small 14 & $\PSL(4,3)$ & $8$ & $5$ \\
\sf\small 15 & $\PSL(6,2)$ & $9$ & $7$ \\
\end{tabular}
\end{center}
\caption{Remaining $T = \soc(\sh(P,n))$ from rows 1--6 of Table~\ref{table:2transAS}, with $k$, and $n$ such that $P$ is affine $2$-transitive and $\sh(P,n)$ is almost simple}
\label{table:2transAS-ii}
\end{table}

\begin{lem}
\label{lem:2-trans Sp}
If $G = \sh(P,n) = \Sp(2d,2) = T$ as in row $1$ or $2$ of Table $\ref{table:2transAS}$, then row $1$ or $2$ of Table $\ref{table:2transAS-ii}$ holds.
\end{lem}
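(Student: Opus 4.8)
The plan is to pin down $k$, $n$ and $d$: first by elementary arithmetic, then by analysing how the $2$-transitive affine group $P$ can sit inside $\Sp(2d,2)$.

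\emph{Step 1 (finding $k$ and $n$).} In rows 1 and 2 of Table~\ref{table:2transAS} we have $d\geqslant3$ and $kn=m=2^{d-1}(2^d+\varepsilon)$ with $\varepsilon\in\{1,-1\}$; the odd part of $m$ is $2^d+\varepsilon$. If $k=p^e$ with $p=2$ then $e\leqslant d-1$, so $n=m/k$ is divisible by $2^d+\varepsilon>2^{d-1}\geqslant k$, contradicting $n<k$ (which holds by \eqref{eq:kn}). Hence $p$ is odd and $k\mid 2^d+\varepsilon$. Writing $2^d+\varepsilon=kj$ gives $n=2^{d-1}j$, and $n<k=(2^d+\varepsilon)/j$ forces $2^{d-1}j^2<2^d+\varepsilon<2^{d+1}$, so $j^2<2+2^{1-d}<3$ as $d\geqslant3$; thus $j=1$, $k=2^d+\varepsilon$ and $n=2^{d-1}$.

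\emph{Step 2 (the case $e\geqslant2$, and reduction to $k$ prime).} Suppose $k=2^d+\varepsilon=p^e$ with $e\geqslant2$, so $p$ is odd. If $e$ is even then $p^e$ is a perfect square; this is impossible when $\varepsilon=-1$ since $2^d-1\equiv3\pmod4$, and when $\varepsilon=1$ it gives $2^d=(p^{e/2}-1)(p^{e/2}+1)$, a product of two powers of $2$ differing by $2$, so $p^{e/2}=3$, $d=3$, and $(k,n)=(9,4)$. If $e$ is odd then $p-\varepsilon$ divides $2^d=p^e-\varepsilon$ and the complementary cofactor $(p^e-\varepsilon)/(p-\varepsilon)$ is odd and greater than $1$, which is absurd. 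So we may assume $e=1$, i.e.\ $k=2^d+\varepsilon$ is prime; then $d$ is a power of $2$ if $\varepsilon=1$ (so $d\geqslant4$), and $d$ is prime if $\varepsilon=-1$.

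\emph{Step 3 (eliminating the prime cases).} When $k$ is prime the only $2$-transitive affine group of degree $k$ is $\AGL(1,k)$, so $P^\rho\cong C_k\rtimes C_{k-1}=\AGL(1,k)$ lies inside $G=\Sp(2d,2)\leqslant\GL(2d,2)=\GL(V)$, where $V=\mathbb F_2^{2d}$, $C_k=\soc(P^\rho)$, and a generator $h$ of the complement $C_{k-1}$ acts on $C_k$ faithfully by conjugation. Put $\delta=\mathrm{ord}_k(2)$; from $2^d\equiv\varepsilon\pmod k$ together with $d$ being a power of $2$, resp.\ prime, one gets $\delta=2d$ if $\varepsilon=1$ and $\delta=d$ if $\varepsilon=-1$, and since $k$ is prime every nontrivial irreducible $\mathbb F_2[C_k]$-module has dimension $\delta$. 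By Maschke's theorem $V|_{C_k}$ is a sum of trivial modules and copies of such modules; as $C_k$ acts faithfully and $\dim V=2d$, the module $V|_{C_k}$ has a unique nontrivial isotypic component $W$ (which equals $V$ when $\varepsilon=1$). Since $h$ normalises $C_k$ it fixes $W$, and by Schur's lemma $\mathrm{End}_{C_k}(W)\cong\mathbb F_{2^\delta}$, so the conjugation action of $h$ on $C_k$ is realised inside $\mathrm{Gal}(\mathbb F_{2^\delta}/\mathbb F_2)$ and hence has order dividing $\delta\leqslant 2d$; by faithfulness $k-1\mid 2d$. For $\varepsilon=1$ this says $2^d\leqslant 2d$, impossible for $d\geqslant3$; for $\varepsilon=-1$ it says $2^d-2\leqslant 2d$, which fails for $d\geqslant4$ and so leaves only $d=3$, giving $(k,n)=(7,4)$. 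In all surviving cases $d=3$, hence $G=\Sp(6,2)$ and $(k,n)\in\{(7,4),(9,4)\}$, i.e.\ row 1 or 2 of Table~\ref{table:2transAS-ii} holds.

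The main obstacle is Step 3: a priori $d$ ranges over the (unbounded, not fully understood) set of values for which $2^d\pm1$ is prime, so a uniform argument is essential, and the natural one is to bound the normaliser in $\Sp(2d,2)$ of a cyclic subgroup of prime order by analysing its action on $\mathbb F_2^{2d}$ via Maschke and Schur as above. As an alternative in the case $\varepsilon=1$, one may instead note that $\AGL(1,2^d+1)$ contains an element of $2$-power order $2^d$, whereas every unipotent element of $\Sp(2d,2)$ has order at most $2^{\lceil\log_2 2d\rceil}<2^d$ once $d\geqslant4$.
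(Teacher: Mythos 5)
Your Steps 1 and 2 are correct and give a clean elementary substitute for the paper's route (which instead uses $m<k^2$ plus Zsigmondy to force $k=2^d+\epsilon$ with $e=1$), and the reduction to $P=\AGL(1,k)$ for $k$ prime is fine. The gap is in Step 3, in the case $\varepsilon=-1$. The assertion that $V|_{C_k}$ has a unique nontrivial isotypic component does not follow from faithfulness together with $\dim V=2d$: when $\delta=d$ there can be two nontrivial homogeneous components, and in the symplectic situation that is exactly what happens. For $d\geqslant 3$ one has $-1\notin\langle 2\rangle \pmod{2^d-1}$, so the faithful $d$-dimensional irreducible $\mathbb F_2[C_k]$-module $U$ is not self-dual; a Singer cycle of a Levi subgroup $\GL(d,2)\leqslant\Sp(2d,2)$ has $V|_{C_k}=U\oplus U^{*}$ with $U\not\cong U^{*}$, i.e.\ two nontrivial isotypic components. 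Consequently ``$h$ fixes $W$'' is unjustified, and it is actually false in the one surviving configuration: $\AGL(1,7)\leqslant \sym(8)\leqslant \Sp(6,2)$, and there the element of order $6$ must interchange the two $3$-dimensional components, because the stabiliser of each component induces only $\langle 2\rangle\cong C_3$ on $C_7$. Taken at face value your argument gives $k-1\mid\delta=d$, which for $\varepsilon=-1$ would wrongly exclude $d=3$, i.e.\ row 1 of Table~\ref{table:2transAS-ii}; so the step does not merely lack justification, it proves too much.

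The repair is short and leaves your numerics intact: $h$ permutes the (at most two) nontrivial isotypic components, so $h^{2}$ stabilises each of them, and your Schur--Galois argument applied to $h^{2}$ on one component shows that the automorphism of $C_k$ induced by $h^{2}$ has order dividing $\delta$; hence the automorphism induced by $h$ has order dividing $2\delta\leqslant 2d$, and by faithfulness $k-1\mid 2d$ --- which is the bound you in fact use ($2^{d}\leqslant 2d$, resp.\ $2^{d}-2\leqslant 2d$), so the conclusion $d=3$ and rows 1--2 of Table~\ref{table:2transAS-ii} stands. (Minor slip: the congruence is $2^{d}\equiv-\varepsilon\pmod{k}$, not $\varepsilon$, though your stated values $\delta=2d$ for $\varepsilon=1$ and $\delta=d$ for $\varepsilon=-1$ are the correct ones; your alternative element-order argument for $\varepsilon=1$ is also valid.) With this patch your proof is a genuine alternative to the paper's, which instead shows $p$ is a primitive prime divisor of $2^{2d}-1$ or $2^{d}-1$, quotes the structure of Singer-cycle normalisers and of the stabiliser of a totally isotropic $d$-space to get $p-1\leqslant 2d$, and finishes with the inequality $2d\geqslant 2^{d-1}$; your version avoids Zsigmondy and the group-theoretic input at the cost of the representation-theoretic analysis above, which must be done carefully precisely because of the two swapped components.
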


\begin{proof}
Let $G = \sh(P,n)$ and suppose for a contradiction that $G = \Sp(2d,2)$ for some integer $d \geqslant 3$. Then $kn = 2^{d-1}\big(2^d + \epsilon\big)$ where $\epsilon \in \{1,-1\}$. In particular, since $k > n$ and $k$ is a prime power, we have that $k$ divides $2^d + \epsilon$. By \eqref{eq:kn}, $2^{d-1}\big(2^d + \epsilon\big) < p^{2e}$. If $p^e = k \leqslant \big(2^d + \epsilon\big)/3$ then the above yields $\big(2^d + \epsilon\big)^2/9 > 2^{d-1}\big(2^d + \epsilon\big)$, so $2^d + \epsilon > 9 \cdot 2^{d-1} = 2^d + 7 \cdot 2^{d-1}$. Since $d \geqslant 3$, this gives a contradiction. Hence $p^e = k > \big(2^d + \epsilon\big)/3$, and since $k$ divides $2^d + \epsilon$ we have that
	\[ p^e = k = 2^d + \epsilon. \]
 If $d=3$ we get the two cases given in rows 1 and 2 of Table~\ref{table:2transAS-ii}, so we now assume $d\geqslant 4$. 

If $\epsilon = -1$ then $p^e = 2^d - 1$. In particular, $d\neq 6$ so by Lemma~\ref{lem:zsig}, $2^d-1$ has a primitive prime divisor, $r$ say. Since $r$ divides $2^d-1=p^e$, we have $r=p$. Let $c$ be a prime divisor of  $d$, then  $2^c-1$ divides $2^d-1=p^e$, and so $p$ must divide $2^c-1$. Since $p$ is a primitive prime divisor of $2^d-1$, it follows that $c=d$. Hence $d$ is prime, and since $d\geqslant 4$, $d$ is an odd prime.  
 If $\epsilon = 1$ then $p^e = 2^d + 1$.
By Lemma~\ref{lem:zsig} and since $d\geqslant 4$, $2^{2d} - 1$ has a primitive prime divisor, $r$ say, and $r$ does not divide $2^d-1$ by the definition of a primitive prime divisor. Thus $r$ must divide $2^d+1=p^e$, and again we have $r=p$. Hence, for both values of $\epsilon$, $p$ is a primitive prime divisor of $2^{2d}-1$.


We  claim that $k=p$ and $\big|N_G(\soc(P^\rho))/C_G(\soc(P^\rho))\big|$ divides $2d$. If $\epsilon = 1$    then since $p$ is a primitive prime divisor of $2^{2d}-1$, $\soc(P^\rho)$ is contained in a Singer cycle group $Z_{2^{2d}-1}$ of $\GL(2d,2)$ and $N_{\GL(2d,2)}(\soc(P^\rho)) \cong Z_{2^{2d}-1}.2d$ (see \cite[Kapitel II, 7.3]{HuppertI}). Thus $k=p$ and the claim is proved in this case. If $\epsilon = -1$ then  since $d$ is odd, by considering orders, we see that the stabiliser $G_U = 2^{\binom{d}{2}}.\GL(d,2)$ of a totally isotropic subspace $U$ of dimension $d$ contains a Sylow $p$-subgroup of $G$. Hence a Sylow $p$-subgroup of $G$ is isomorphic to a Sylow $p$-subgroup of $\GL(d,2)$. Since $p$ is a primitive prime divisor of $2^d-1$, the Sylow $p$-subgroups of $\GL(d,2)$, $G$, and therefore $\mathrm{soc}(P^\rho)$, are cyclic. Hence  $k=p$.  Since $\soc(P^\rho)$ preserves the decomposition $U \oplus U^*$ of the underlying vector space, and this decomposition is unique for $\soc(P^\rho)$, $N_G(\soc(P^\rho))$ also preserves this decomposition. Thus $N_G(\soc(P^\rho)) = N_{\GL(d,2).2}(\soc(P^\rho))$. The same argument as above now gives that $|N_G( \soc(P^\rho))/C_G(\soc(P^\rho))|$ divides $2d$, and hence the claim is proved for both cases.


It follows that $p - 1 \leqslant 2d$. This gives
	\[ 2dp \geqslant p(p-1) \geqslant kn = 2^{d-1}\big(2^d + \epsilon) = 2^{d-1}p \]
and so $2d \geqslant 2^{d-1}$. Since we have $d\geqslant 4$, this implies that $d = 4$. Since $2^d - 1 \leqslant 2^d + \epsilon = k = p \leqslant 2d + 1$, we get a contradiction if $d=4$. This completes the proof.
\end{proof}

\begin{lem}
\label{lem:2-trans sz}
If $T = \Sz(q)$ as in row $3$ of Table~$\ref{table:2transAS}$, then $G = T.3 = \aut(T)$ and row $3$ of Table $\ref{table:2transAS-ii}$ holds.
\end{lem}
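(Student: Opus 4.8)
The plan is to work inside $G = \sh(P,n)$ under the standing hypotheses, where $T = \soc(G) = \Sz(q)$ with $q = 2^f$, $f \geqslant 3$ odd, and the degree of $G$ is $m = q^2+1 = kn$. First I would pin down the arithmetic: since $k = p^e > n \geqslant 2$, $k$ is a prime power dividing $q^2+1$, and $kn = q^2+1$. Because $q^2+1$ is never divisible by a small power of $2$ (indeed $q^2+1 \equiv 1 \pmod 2$) and in fact $q^2 + 1$ has a primitive prime divisor of $2^{4f} - 1$ by Zsigmondy (Lemma~\ref{lem:zsig}, noting $4f \neq 6$), I would argue as in Lemma~\ref{lem:2-trans Sp} that $p$ must be that primitive prime divisor, forcing $\soc(P^\rho) = C_p^e$ to be a cyclic group of order $p$ and hence $k = p$. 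The key structural input is Lemma~\ref{lemma:socP}, which tells us $\soc(P^\rho) = C_p \leqslant T = \Sz(q)$; so $p$ is a prime divisor of $|\Sz(q)| = q^2(q^2+1)(q-1)$ with $p \mid q^2+1$, and the Sylow $p$-subgroup of $\Sz(q)$ is cyclic (it lies in a maximal torus of order $q + \sqrt{2q} + 1$ or $q - \sqrt{2q}+1$), consistent with $k = p$.

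Next I would bound $n$. We have $kn = p \cdot n = q^2+1$, so $n = (q^2+1)/p$. The normaliser of a cyclic Sylow $p$-torus in $\Sz(q)$ has order $4(q \pm \sqrt{2q}+1)$, so $|N_T(\soc(P^\rho))/C_T(\soc(P^\rho))|$ divides $4$; since $|G/T|$ divides $f$ (row 3 of Table~\ref{table:2transAS}), we get $|N_G(\soc(P^\rho)) / C_G(\soc(P^\rho))|$ divides $4f$, and as $P^\rho$ is $2$-transitive of degree $k = p$ it embeds in $N_G(\soc(P^\rho))/C_G(\soc(P^\rho))$, whence $p - 1 = |P^\rho : \soc(P^\rho)| $ divides $4f$, so $p \leqslant 4f + 1$. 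Combined with $p \mid q^2+1 = 2^{2f}+1$ and $p \geqslant (q^2+1)/n$ with $n < k = p$, i.e. $p^2 > q^2 + 1$, so $p > q$, hence $2^f < p \leqslant 4f+1$. This inequality $2^f < 4f + 1$ with $f \geqslant 3$ odd forces $f = 3$, so $q = 8$, $q^2 + 1 = 65 = 5 \cdot 13$, and the only prime-power factorisation with $k > n \geqslant 2$ is $k = 13$, $n = 5$. This is exactly row 3 of Table~\ref{table:2transAS-ii}.

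Finally, with $(k,n) = (13,5)$ fixed, it remains to identify $G$ exactly. Since $\sh(T, n) \leqslant \sh(P,n) = G$ and $G$ is almost simple with socle $\Sz(8)$, and $|\aut(\Sz(8)):\Sz(8)| = 3$, the only possibilities are $G = \Sz(8)$ or $G = \Sz(8).3 = \aut(\Sz(8))$. Here I would use a direct computation with \textsc{Magma} \cite{magma}: taking $P$ the affine $2$-transitive group of degree $13$ (or more precisely checking over the affine $2$-transitive groups of degree $13$, which all contain $C_{13}$) and computing $\sh(P, 5)$ shows that the group obtained is $\Sz(8).3$ rather than $\Sz(8)$; alternatively one can exhibit an outer element directly from the action of $\sigma$. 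I expect the main obstacle to be the book-keeping in the normaliser/centraliser argument bounding $p$ in terms of $f$ — getting the order of $N_{\Sz(q)}$ of a cyclic $p$-torus correct and correctly relating it to the $2$-transitivity of $P^\rho$ — since the Suzuki group torus structure (with the $q \pm \sqrt{2q} + 1$ factors) is less familiar than the $\GL$ Singer-cycle case used in Lemma~\ref{lem:2-trans Sp}; once $(k,n)$ is nailed down, the final identification is a short computation.
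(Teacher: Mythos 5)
Your numerical reduction is sound and reaches the same conclusion as the paper by a mildly different route. The paper first exploits the coprime factorisation $q^2+1=(q+r+1)(q-r+1)$, $r=2^{(f+1)/2}$, together with $kn=m<p^{2e}$ to force $p^e=q+r+1$ and $n=q-r+1$, then uses the cyclicity of the subgroups of order $q+r+1$ in $\Sz(q)$ and Lemma~\ref{lemma:socP} to get $e=1$ and $p-1=q+r\mid 4f$. You instead obtain $e=1$ (hence $k=p$) directly from the cyclicity of Sylow $p$-subgroups of $\Sz(q)$ for $p\mid q^2+1$, and replace the exact identification $p=q+r+1$ by the weaker but sufficient inequality $p>q$ coming from $k>n$; combined with $p-1\mid 4f$ this gives $2^f<p\leqslant 4f+1$, so $f=3$ and $(k,n)=(13,5)$, i.e.\ row 3 of Table~\ref{table:2transAS-ii}. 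That variant works. One slip along the way: a prime divisor of $2^{2f}+1$ need \emph{not} be a primitive prime divisor of $2^{4f}-1$ (for instance $5\mid 2^{18}+1$ although $5\mid 2^4-1$), so the opening Zsigmondy claim modelled on Lemma~\ref{lem:2-trans Sp} is unjustified; fortunately your argument never actually uses it, since the torus/cyclic-Sylow argument already yields $k=p$.

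The genuine problem is the last step, the identification $G=T.3$, which is part of the statement. You propose to settle it by computing $\sh(P,5)$ for the affine $2$-transitive group $P$ of degree $13$ and observing that the output is $\Sz(8).3$. But that is not what the computation returns: as recorded in the proof of Theorem~\ref{thm:finish 2-trans}, for every row of Table~\ref{table:2transAS-ii} (including $(k,n)=(13,5)$) the {\sc Magma} computation yields a group containing $\alt(65)$, i.e.\ the hypothesis $\soc(\sh(P,n))=\Sz(q)$ is never actually realised. So the predicted outcome is wrong, and as written the step fails (at best the computation would establish the lemma vacuously, which is not how you argue, and is in any case the job of Theorem~\ref{thm:finish 2-trans}, not of this lemma). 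The correct deduction is already implicit in your own normaliser estimate: under the hypothesis, $P^\rho\cong\AGL(1,13)$ has order $156$ and normalises $\soc(P^\rho)=C_{13}$, whereas $N_T(C_{13})$ has order only $4\cdot 13=52$; hence $P^\rho\not\leqslant T$, so $G\neq T$, and since $|\aut(T):T|=3$ this forces $G=T.3=\aut(T)$ (the Atlas fact $\AGL(1,13)\leqslant\aut(\Sz(8))$, which the paper quotes, confirms that no contradiction arises at this point). Replace the computation by this observation and your proof is complete.
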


\begin{proof}
Let $q = 2^f \geqslant 8$ with $f$ odd. Let $r = 2^{(f+1)/2}$ so $r^2 = 2q$. Then 
	\[ p^e n = m = q^2 + 1 = (q+r+1)(q-r+1) < p^{2e}. \]
Since $(q+r+1, q-r+1)=1$, $p^e$ divides one of these factors. If $p^e$ divides $q-r+1$ then $q^2+1 > p^{2e}$, a contradiction. 
Thus $p^e$ divides $q+r+1$, and we get a similar contradiction if $p^e \leqslant (q+r+1)/3$ since $3(q-r+1) > q+r+1$ for all $f \geqslant 3$. 
It follows (since $q+r+1$ is odd) that $p^e = q+r+1$, and $n = q-r+1$. Now all subgroups of $T = \Sz(q)$ of order $q+r+1$ are cyclic \cite[Section 4]{sz} 
 and hence, by Lemma~\ref{lemma:socP}, we must have $e = 1$ and $\AGL(1,p) \cong P^\rho\leqslant N_{\aut(T)}(\soc(P^\rho))$. By \cite{sz}, $|N_{\aut(T)}(C_p)| $ divides $ 4fp$, and hence $p-1 = q+r$ divides $4f$. The only solution is $f = 3$, $T = \Sz(8)$, in which case we do have $\AGL(1,13) \leqslant \aut(T)$ \cite[p.28]{atlas}. 
\end{proof}

\begin{lem}\label{lemma 2-trans Ree PSU}
If $T = \Ree(q)$ or $\PSU(3,q)$ with $q \geqslant 3$ as in row $4$ or $5$ of Table~$\ref{table:2transAS}$, then $G = \aut(T)$ and row $4$ of 
Table $\ref{table:2transAS-ii}$ holds.
\end{lem}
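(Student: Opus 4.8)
The plan is to mimic the structure of the proofs of the preceding lemmas (Lemmas~\ref{lem:2-trans Sp} and~\ref{lem:2-trans sz}): first pin down $k$ and $n$ from the degree equation $kn = m = q^3+1$, then use Lemma~\ref{lemma:socP} together with the known subgroup structure of $\Ree(q)$ and $\PSU(3,q)$ to force $e=1$ (so $k=p$ is prime), and finally bound $p-1$ by the order of the normaliser of a cyclic torus to leave only the two entries of Table~\ref{table:2transAS-ii}, rows~4 and~5.

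First I would factor $q^3+1 = (q+1)(q^2-q+1)$ and note that in both families $(q+1,q^2-q+1)$ divides $3$. Since $k=p^e$ is a prime power dividing $q^3+1$ with $k>n$, and since $kn=q^3+1 < k^2$ by~\eqref{eq:kn}, the factor $p^e$ cannot divide the smaller factor $q+1$ (that would give $p^{2e}\leqslant (q+1)^2 < q^3+1$ for $q\geqslant 2$), so $p^e$ divides $q^2-q+1$; a size estimate of the form $3(q+1) > q^2-q+1$ fails for $q\geqslant 3$ (one must handle $q=3,4$ separately — indeed $q=4$, $q^2-q+1=13$ will survive), but in general one concludes $p^e = q^2-q+1$ and $n=q+1$ when $q$ is large, with the small cases $q=3$ (giving $\Ree(3)\cong\PGamL(2,8)$, not simple, excluded) and $q=4$ (giving $T=\PSU(3,4)$, $k=13$, $n=5$) checked directly. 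For $\Ree(q)$ with $q=3^f$, $f$ odd $\geqslant 3$, the factor $q^2-q+1$ is divisible by a primitive prime divisor of $q^6-1$ (Zsigmondy, Lemma~\ref{lem:zsig}), and the maximal subgroups of $\Ree(q)$ containing an element of this order are cyclic of order $q^2-q+1$ normalised by a group of order $6$ (see the Ree-group subgroup structure); similarly for $\PSU(3,q)$ the subgroup of order $(q^2-q+1)/(3,q+1)$ lies in a cyclic maximal torus whose normaliser in $\aut(T)$ has order $3\cdot\frac{q^2-q+1}{(3,q+1)}\cdot 2f$ or thereabouts.

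Next, by Lemma~\ref{lemma:socP} we have $\soc(P^\rho)=C_p^e \leqslant P^\rho\cap T$, so $C_p^e$ is a subgroup of $T$ of order $p^e = q^2-q+1$ (up to the $(3,q+1)$ factor); since all such subgroups of $T$ are cyclic, $e=1$, $k=p$ is prime, and $P^\rho \cong \AGL(1,p)$ embeds in $N_{\aut(T)}(C_p)$. Reading off $|N_{\aut(T)}(C_p)|$ from the Atlas-style data, $p-1$ must divide a small multiple of $f$ (something like $6f$ for $\Ree(q)$ and $6f$ or $12f$ for $\PSU(3,q)$). Combining with $p = q^2-q+1$ and $q = q_0^f$ gives an inequality $q^2 - q \leqslant cf$ with $c$ a small constant and $f \leqslant q/2$, which forces $q$ tiny: for $\Ree(q)$, $f$ odd $\geqslant 3$ gives $q\geqslant 27$ and the inequality fails, so $\Ree(q)$ is eliminated entirely; for $\PSU(3,q)$ with $q=q_0^f$, one checks the finitely many surviving $q$ and finds only $q=4$, i.e.\ row~4 of Table~\ref{table:2transAS-ii}, and there one verifies $\AGL(1,13)\leqslant \aut(\PSU(3,4))$ and $G=\aut(T)$, possibly with a {\sc Magma} check to confirm $\sh(P,5)=\PSU(3,4).D_{12}$ or whatever the precise overgroup is.

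The main obstacle I expect is getting the normaliser orders $|N_{\aut(T)}(C_p)|$ exactly right for the two families — these require the detailed maximal-subgroup structure of $\Ree(q)$ (from Kleidman's work, or Wilson's) and of $\PSU(3,q)$ (from the $\mathrm{Aschbacher}$ classes / Hartley or the Atlas), including the action of field automorphisms on the relevant maximal torus, because the final numerical squeeze is tight and an off-by-a-factor error in the normaliser bound would either fail to eliminate $\Ree$ or leave spurious cases. A secondary nuisance is the small-$q$ bookkeeping: $\Ree(3)$ is not simple and must be discarded by hypothesis, and $q=4$ for $\PSU$ must be recognised as genuinely surviving rather than eliminated, so the size inequalities must be stated carefully enough to distinguish "$q$ large, contradiction" from "$q\in\{$small list$\}$, check by hand/computer".
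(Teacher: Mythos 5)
Your proposal reproduces the paper's main line of argument for the case $p\neq 3$, but it has a genuine gap at the very first step. From ``$p^e$ divides $q^3+1=(q+1)(q^2-q+1)$ and $p^e\nmid q+1$'' you conclude that $p^e$ divides $q^2-q+1$. This inference is false precisely when $p=3$ and $q\equiv 2\pmod 3$: then $\gcd(q+1,q^2-q+1)=3$, and the power $k=3^e$ can be split between the two factors, with $q^2-q+1$ divisible by $3$ but not by $9$ and $3^{e-1}$ dividing $q+1$. The paper treats this as a separate case: since $q^2-q+1\equiv 3\pmod 9$, one gets $3^{e-1}\mid q+1$, hence $q^3+1<3^{2e}\leqslant 9(q+1)^2$, which forces $q=8$, $k=27$, $n=19$ (so $T=\PSU(3,8)$ of degree $513$); this survivor is then killed because $|\aut(T)|$ is not divisible by $13$, so $\aut(T)$ contains no $2$-transitive group of degree $27$. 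Your dichotomy never produces this candidate, so as written the argument does not prove the lemma.

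A secondary problem is your assertion that ``in general $p^e=q^2-q+1$ and $n=q+1$ when $q$ is large'': this does not follow from \eqref{eq:kn}, since for moderately large $q$ a proper divisor of $q^2-q+1$ can still exceed $\sqrt{q^3+1}$, so (unlike the Suzuki case, where the cofactor comparison $3(q-r+1)>q+r+1$ does the work) you cannot force $p^e$ to be the whole factor. The paper sidesteps this: it uses only $p^e\mid q^2-q+1$, shows that the order $c$ of $q$ modulo $p$ cannot be $1$, $2$ or $3$, so $p$ is a primitive prime divisor of $q^6-1$; then the Sylow $p$-subgroups of $T$ are cyclic (Singer-cycle argument for $\PSU(3,q)$, cyclic subgroups of order $q^2-q+1$ in $\Ree(q)$), so Lemma~\ref{lemma:socP} forces $e=1$ and $\AGL(1,p)\leqslant N_{\aut(T)}(C_p)$, giving $p-1\mid 6f$. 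The numerical squeeze is then the weaker chain $q+1\leqslant n<p\leqslant 6f+1$ (using only $p\leqslant q^2-q+1$), not your $q^2-q\leqslant cf$; it still leaves finitely many $q$, and after checking them one lands on row 4 of Table~\ref{table:2transAS-ii}. Your elimination of $\Ree(q)$ via $q=3^f\geqslant 27>6f$ does survive in this corrected form, but the $\PSU(3,q)$ bookkeeping has to be redone with the weaker bound, and the $p=3$ case above must be added before the proof is complete.
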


\begin{proof}
Here
	\[ p^e n = m = q^3 + 1 = (q+1)(q^2-q+1) < p^{2e}, \]
and $(q+1, q^2-q+1) = (q+1, 3)$.  If  $p^e$ divides $q+1$, then $n \geqslant q^2-q+1 \geqslant q+1 \geqslant p^e$, which is a contradiction. 
Thus either (a) $p \ne 3$ and $p^e$ divides $q^2-q+1$, or (b) $p = 3$ and $3$ divides $(q+1, q^2-q+1)$. 

Consider case (a). Here $p^e$ divides $q^6 - 1$. Let $c$ be minimal such that $p$ divides $q^c- 1$. Then $c$ divides $6$. If $c = 1$ then $p$ divides $(q^2-q+1, q-1) = 1$, a contradiction. If $c = 2$ then $p$ divides $(q^2-q+1, q^2-1) = (q-2,3)$, which contradicts the fact that $p \ne 3$. If $c = 3$ then $p$ divides $(q^3-1, q^3+1) = (q+1, 2)$, so $p = 2$, but then $p$ divides $q - 1$, contradicting $c = 3$. Thus $c = 6$ and $p$ is a primitive prime divisor of $q^6-1$.  Suppose that $T=\PSU(3,q)$. Now  a Sylow $p$-subgroup of $\mathrm{GU}(3,q)$ is contained in a Singer cycle subgroup of $\GL(3,q^2)$, and is therefore cyclic. Suppose now that $T = \mathrm{Ree}(q)$. The $p$-part of $|T|$ is equal to the $p$-part of $q^2-q+1$. Since $\mathrm{Ree}(q)$ contains cyclic subgroups of  order $q^2-q+1$ (see \cite{kleidman2g2}), the Sylow $p$-subgroups of $\mathrm{Ree}(q)$ are cyclic. Thus in both cases the Sylow $p$-subgroups of $T$ are cyclic, and hence $e = 1$ and $\AGL(1,p) \cong P^\rho \leqslant N_{\aut(T)}(\soc(P^\rho))$. This means that $p-1$ divides $|N_{\aut(T)}(\soc(P^\rho))/C_{\aut(T)}(\soc(P^\rho))| = 6f$  (see \cite[Table 8.5]{BHRD} if $T=\PSU(3,q)$ and \cite[Theorem C]{kleidman2g2} if $T=\mathrm{Ree}(q)$).  Since $np = q^3+1$ we have $q+1 \leqslant n < p \leqslant 6f+1$. The only values of $q = q_0^f$ satisfying $q < 6f$ are $q \in \{4,5,8,16\}$.  The case $q = 4$ gives row 4 of Table~\ref{table:2transAS-ii}. For $q = 5$, $8$, $16$ we have that $p-1$ divides $6f = 6$, $18$, $24$, and $p$ divides $q^2-q+1 = 21$, $57$, $241$ respectively. Since $p \ne 3$ (and $241$ is prime) this implies that $(q,p) = (5,7)$ or $(8, 19)$. These give a contradiction since in both cases $n = (q^3+1)/p > p$.

Now consider case (b) where $p = 3$ divides $(q+1, q^2-q+1)$. Thus $q \equiv 2\pmod{3}$, and since $q > 2$ we have $q \geqslant 5$. Also, since $3^{2e} > q^3+1 \geqslant 126$, we have $e \geqslant 3$. Now modulo $9$, $q \equiv 2, 5$ or $8$, and in each of these cases $q^2-q+1 \equiv 3\pmod{9}$.  Therefore $9$ does not divide $q^2-q+1$ and hence $3^{e-1} \geqslant 9$ divides $q+1$. Thus $q+1\geqslant 9$ and 
	\[ q^3+1 < 3^{2e} = (3^e)^2 \leqslant 9(q+1)^2 \]
and hence $q = 8$. Then $p^e = 27$, $n = 19$. However $\aut(T)$ does not contain a $2$-transitive group of degree $27$ since its order is not divisible by $13$. 
\end{proof}

\begin{lem}
\label{lem: 2-trans PSL}
If $T = \PSL(d,q)$ as in row $6$ of Table~$\ref{table:2transAS}$, then one of rows $5$--$15$ of Table $\ref{table:2transAS-ii}$ holds.
\end{lem}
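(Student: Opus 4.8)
The plan is to pin down the prime $p$ and the exponent $e$ in terms of $d$, $q$ and $f$, and then reduce to the finite list in Table~\ref{table:2transAS-ii}. Throughout write $m = kn = (q^d - 1)/(q-1)$, recall from \eqref{eq:kn} that $m < k^2 = p^{2e}$ and $n < k$, recall from Lemma~\ref{lemma:socP} that $\soc(P^\rho) = C_p^e \leqslant T = \PSL(d,q)$, and note that $G = \sh(P,n)$ is $2$-transitive by Lemma~\ref{lem: 2trans-k>n} while $|G/T| \leqslant (d,q-1)f$ by row $6$ of Table~\ref{table:2transAS}. The first step is to observe that $p \nmid q$: if $p = q_0$ then $q \equiv 0 \pmod p$, so $m = 1 + q + \dots + q^{d-1} \equiv 1 \pmod p$, contradicting $p \mid k \mid m$. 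Let $c$ be the multiplicative order of $q$ modulo $p$; since $p \mid m$ we must have $c \mid d$, so $p$ is a primitive prime divisor of $q^c - 1$.

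The second step bounds $p^e$. Since $p^e = k$ divides $m$ we have $e \leqslant v_p(m)$, and Lemma~\ref{lemma:p-part} evaluates $v_p(m) = v_p(q^d - 1) - v_p(q-1)$ explicitly: it equals $v_p(q^c - 1) + v_p(d/c)$ when $c \geqslant 2$, equals $v_p(d)$ when $c = 1$ and $p$ is odd, and equals $v_2(q+1) + v_2(d) - 1$ (with $d$ forced even) when $p = 2$. Combined with $q^{d-1} < m < p^{2e}$ this forces $d$, and then $q$, into a bounded range in most cases; in particular for $c = 1$ and $p$ odd there are \emph{no} solutions, since then $q \geqslant p + 1 \geqslant 4$ and the resulting inequality $q^{d-1} < d^2$ fails for all $d \geqslant 2$. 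When $c = d$ the prime $p$ is a primitive prime divisor of $q^d - 1$, so by Remark~\ref{rem:psylcyclic} the Sylow $p$-subgroups of $T$ are cyclic, whence $e = 1$ and $k = p$; when $1 < c < d$ the subgroup $\soc(P^\rho) = C_p^e$ lies in a conjugate of $\GL(c,q) \wr \sym(d/c) \leqslant \GL(d,q)$, whose $\GL(c,q)$ factors have cyclic Sylow $p$-subgroups, and this bounds both the rank $e$ and the order of a normaliser of $\soc(P^\rho)$. The only case where I need the Sylow structure of $T$ itself is $d = 2$ with $p = 2$: here $q$ is odd, so the Sylow $2$-subgroup of $\PSL(2,q)$ is dihedral and thus has $2$-rank at most $2$, forcing $e \leqslant 2$ and hence $k = 4$ (as $k > n \geqslant 2$ rules out $e = 1$).

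The third step disposes of the cases surviving with small $e$. When $k = p$ we have $P = \AGL(1,p)$, the unique $2$-transitive affine group of prime degree $p$, so $P^\rho \leqslant N_G(\soc(P^\rho)) = N_G(C_p)$, and therefore $p - 1 = |P:C_p|$ divides $|N_G(C_p) : C_G(C_p)|$; using that $C_p$ lies in a Singer-type torus of $\PSL(d,q)$ together with $|G/T| \leqslant (d,q-1)f$, one bounds $|N_G(C_p):C_G(C_p)|$ by a fixed small multiple of $df$, so $p-1$ divides (a divisor of) roughly $d^2 f$. Combined with $q^{d-1} < m < p^2$ and $q = q_0^f \geqslant 2^f$ this bounds $f$ and $d$, and an analogous argument handles the $e = 2$ cases. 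One is then left with a finite list of quadruples $(d,q,k,n)$, which are eliminated or confirmed using the divisibility and size constraints above and, for the handful of genuinely small survivors — and for the Zsigmondy exceptions $(q,d) = (2,6)$ and $d = 2$ with $q+1$ a power of $2$, where the primitive-prime-divisor argument is unavailable — using \textsc{Magma}~\cite{magma}. This leaves exactly rows $5$--$15$ of Table~\ref{table:2transAS-ii}.

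The main obstacle is the non-primitive subcase $1 < c < d$, and especially $c = d/2$ (the source of row $15$, $\PSL(6,2)$ with $p = 3$), where the Sylow $p$-subgroup of $\PSL(d,q)$ need not be cyclic, so $e = 1$ cannot be read off from Remark~\ref{rem:psylcyclic}. I would control $\soc(P^\rho)$ by placing it inside $\GL(c,q) \wr \sym(d/c)$, extracting both the rank bound $e \leqslant d/c$ and a normaliser bound of the same shape as in the $k = p$ case, and then combining these with the $2$-transitivity of $P$ and the estimates of the second step; some care is needed to make the inequalities sharp enough that the residual \textsc{Magma} verification remains finite and small.
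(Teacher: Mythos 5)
Your outline coincides with the paper's argument over most of its range: the order $c$ of $q$ modulo $p$, the evaluation of $m_p$ via Lemma~\ref{lemma:p-part}, the bounds $k \leqslant m_p$ and $m < k^2$ from \eqref{eq:kn}, the exclusion of $c=1$ with $p$ odd, the cyclic-Sylow/Singer-normaliser argument when $c=d$ (giving $k=p$ and $p-1$ dividing a small multiple of $df$), the $2$-part estimate when $p=2$, and a terminal \textsc{Magma} check. The genuine gap is the case $p$ odd with $1<c<d$, which you yourself call the main obstacle. The paper disposes of it almost entirely by arithmetic: writing $p^r=(q^c-1)_p$, $p^s=(d/c)_p$ and $\ell=d/c$, it combines $e\leqslant r+s$ (from $k\leqslant m_p$) with $2e>r(\ell-1)$ (from $q^{d-1}<m<k^2$) to get $\ell<3+2s$, and then $p^s=\ell_p\leqslant\ell$ forces $s=0$ or $(p,s)=(3,1)$. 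The subcase $s=0$, $\ell\geqslant 2$ is then killed by the refinement $k\leqslant (q^c-1)_p\leqslant (q^c-1)/(q-1)$ (valid since $p\nmid q-1$), which together with $m\geqslant k(q^c+1)$ and $m<k^2$ yields $k>q^c$, a contradiction; only $(p,s)=(3,1)$, $\ell=3$ survives, and a short rank computation in $C_{3^r}\wr C_3$ pins down $\PSL(6,2)$, $k=9$, $n=7$ (row 15). Your proposal replaces this decisive reduction by an unexecuted plan --- rank and normaliser bounds for $C_p^e$ inside $\GL(c,q)\wr\sym(d/c)$ combined with the $2$-transitivity of $P$ --- without deriving those bounds or showing that the surviving configurations are finite and reduce exactly to rows 5--15. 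As stated, your inequalities ($e\leqslant\min(d/c,\,v_p(m))$, $m<p^{2e}$, $m>q^{d-1}$) do not even exclude, say, $s=0$, $\ell=2$ with $r\geqslant 2$, since $(q^c-1)^2$ can exceed $q^{2c-1}$; the missing ingredient is precisely the divisibility refinement above (or the paper's $\ell<3+2s$ chain), whose raw materials are already in your second step but are never put together.

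Two smaller points. Row 15 arises with $c=2=d/3$ (the order of $2$ modulo $3$), not $c=d/2$; and no Zsigmondy exceptions enter this lemma at all, since the primitive prime divisor used is $p$ itself, of $q^c-1$ with $c$ the order of $q$ modulo $p$, which exists by definition. Finally, your normaliser bound ``$p-1$ divides roughly $d^2f$'' is weaker than the paper's ``$p-1$ divides $df$'' coming from the Singer normaliser in $\PGamL(d,q)$ \cite[Kapitel II, 7.3]{HuppertI}; this is harmless in principle but enlarges the finite lists (the analogues of Tables~\ref{tab: d and f} and~\ref{tab: p q}) that must then be produced and checked, and the proposal does not carry out that case analysis.
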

\begin{proof}
In this case $m = (q^d - 1)/(q-1)$. Recall from \eqref{eq:kn} that $k$ divides $m$ and $m < k^2$. So $k \leqslant m_p < k^2$, and $p^e$ and $p$ both divide $q^d - 1$. Let $c$ be the smallest positive integer such that $p$ divides $q^c - 1$ (note that $c = 1$ if $p = 2$). Then $c$ divides $d$ by Lemma \ref{lemma:p-part}; let $\ell = d/c$. By Lemma~\ref{lemma:socP} we have $C^e_p = \mathrm{soc}(P^\rho) \leqslant T = \PSL(d,q)$. By \cite[Lemma 5.5.2]{KL} we have 
	\begin{equation} \label{eq:l>e}
	d \geqslant e.
	\end{equation}

\emph{Claim 1. If $p = 2$ then $d$ is even.} If $p = 2$ and $d$ is odd then by Lemma \ref{lemma:p-part} \eqref{p=2},
	\[ m_2 = \left(\frac{q^d-1}{q-1}\right)_2 = \frac{(q^d-1)_2}{(q-1)_2} = \frac{(q-1)_2}{(q-1)_2} = 1. \]
Since $k \leqslant m_2$ we have $k = 2^e \leqslant 1$, a contradiction. Therefore $d$ is even whenever $p = 2$.

Let $r, s \in \mathbb{Z}$ such that $p^r = (q^c-1)_p$ and $p^s = \ell_p = (d/c)_p$. Note that $r \geqslant 1$ since $p$ divides $q^c-1$. Also, for $p = 2$, let $t \in \mathbb{Z}$ such that $2^t = (q+1)_2$. Then by Lemma \ref{lemma:p-part} we have
	\begin{equation} \label{eq:p-part}
	m_p = \left(\frac{q^d-1}{q-1}\right)_p = \begin{cases} 2^{s+t-1} &\text{if $p = 2$}, \\ p^s &\text{if $p \neq 2$, $c = 1$}, \\ p^{r+s} &\text{if $p \neq 2$, $c > 1$.} \end{cases}
	\end{equation}

Now
	\[ m > q^{d-1} = q^{c\ell-1} \geqslant q^{c(\ell-1)} > (q^c-1)^{\ell-1}_p = p^{r(\ell-1)}, \]
and since $m < k^2 = p^{2e}$ it follows that
	\begin{equation} \label{eq:2e}
	2e > r(\ell-1).
	\end{equation}

\emph{Claim 2. If $p \neq 2$ then $c > 1$.} If $p \neq 2$ and $c = 1$ then, $m_p = p^s$, and since $k \leqslant m_p$ we have $e \leqslant s$ using \eqref{eq:p-part}, and thus $r(\ell-1) < 2s$ using \eqref{eq:2e}. So $p^s = \ell_p \leqslant \ell < (2s+r)/r$, which implies that $s \neq 0$. Hence 
$r < 2s/(p^s-1) \leqslant 1$ since $2s\leqslant 3^s-1\leqslant p^s-1$ for all $s\geqslant 1$ and $p\geqslant3$. Thus $r < 1$, a contradiction. 
Therefore $c > 1$ whenever $p \neq 2$.

\smallskip
By Claims 1 and 2 we only need to consider the cases where $p = 2$ and $d$ is even, and where $p \neq 2$ and $c > 1$.

\emph{Case 1.}  Assume that $p = 2$. Recall that in this case $c = 1$, so $d = \ell$.
 Since $3 \leqslant k = 2^e$ we must have $e \geqslant 2$, and using \eqref{eq:p-part} and $k \leqslant m_p$ we have $e \leqslant s+t-1$.
Suppose first that $d = 2$. Then by \eqref{eq:l>e} we have $e=2$ and so  $k = 4$. Thus
	\[ 
	8 \leqslant 4n = kn = m = \frac{q^2-1}{q-1} = q+1 < k^2 = 16, 
	\]
so $q = 7$ or $11$. If $q = 7$ then $k = 4$, $n = 2$, and $T = \PSL(2,7)$, as in row 6 of Table~\ref{table:2transAS-ii}. If $q = 11$ then $k = 4$, $n = 3$, and $T = \PSL(2,11)$, as in row 8 of Table~\ref{table:2transAS-ii}.
Suppose now that $d \geqslant 4$ (recall that $d$ is even). Then, using Lemma~\ref{lemma:p-part} and \eqref{eq:p-part}, we can write
$$ 
2^{2(s+t-1)}  = (m_2)^2 = \left ( \frac{(q^d-1)_2}{(q-1)_2}\right) ^2 = \left((q+1)_2\left(\frac{d}{2}\right)_2\right)^2 \leqslant \left((q+1) \cdot \frac{d}{2}\right)^2. 
$$
Since $m < k^2 = 2^{2e} \leqslant 2^{2(s+t-1)}$, this gives us $m < (q+1)^2d^2/4$ and hence
	\[ 
	d^2> \frac{4(q^d-1)}{(q-1)(q+1)^2} > \frac{4q^{d-2}}{q+1} > \frac{4q^{d-2}}{2q} \geqslant 2 \cdot 3^{d-3}. 
	\]
Hence $d \leqslant 5$, and therefore $d = 4$. So $2 \leqslant e \leqslant 4$  by \eqref{eq:l>e} and $k \in \{4,8,16\}$. From the fact that $m < k^2$ and $k$ divides $m$ we get that $k = 8$, $n = 5$, and $T = \PSL(4,3)$, as in row 14 of Table~\ref{table:2transAS-ii}.

\emph{Case 2.} Finally, assume that $p \neq 2$. Then $c > 1$ by Claim 2, so $m_p = p^{r+s}$ by \eqref{eq:p-part}. Since $p^e = k \leqslant m_p$ we have $e \leqslant r+s$, and using \eqref{eq:2e}, $r(\ell-1) < 2(r+s)$. Now  since $r \geqslant 1$, we have $2(r+s) \leqslant 2r(1+s)$. Hence $\ell-1 < 2(1+s)$, so that
	\begin{equation} \label{eq:l}
	\ell < 3 + 2s.
	\end{equation}
Since $p^s = \ell_p \leqslant \ell$, we have $p^s < 3 + 2s$.  
Therefore either $s=0$ or $(p,s)=(3,1)$.  

\emph{Subcase 2.1.} Suppose that $s = 0$.  By Claim 2 and  \eqref{eq:p-part} we have $k\leqslant m_p = p^r = (q^c-1)_p$ and since $c>1$, $p \nmid q-1$, so $k\leqslant (q^c-1)_p \leqslant \frac{q^c-1}{q-1}$. Now if $\ell \geqslant 2$, we have 
$$k^2 > m = \frac{q^d-1}{q-1}=\frac{q^{\ell c}-1}{q-1} \geqslant \frac{q^{2c}-1}{q-1} = \frac{(q^c+1)(q^c-1)}{q-1} \geqslant k(q^c+1).$$
 Hence $k > q^c$, contradicting $k \leqslant q^c-1$. Thus $\ell=1$, and so $d=c$. Hence $p$ is a primitive prime divisor of $q^d-1$, and so the Sylow $p$-subgroups of $T$ are cyclic (see Remark~\ref{rem:psylcyclic}). Hence  $k=p$. Now by \eqref{eq:p-part} we have $p^r = m_p < k^2 = p^2$. Thus $r=1$.
 
Since $p$ is a primitive prime divisor of $q^d-1$,   $\mathrm{soc}(P^\rho) \cong C_p$ is contained in a Singer cycle subgroup of $T$. Now by \cite[Kapitel II, 7.3]{HuppertI}, $|N_G(\mathrm{soc}(P^\rho))/C_G(\mathrm{soc}(P^\rho))|$ divides $df$ where $q=q_0^f$, $q_0$ a prime,  so we find that $p-1 = |P/\mathrm{soc}(P)|$ divides $df$. 
 In particular, $p-1 \leqslant df$, so $p \leqslant df+1$. Thus  
$$
(df+1)^2 \geqslant p^2 = k^2 > m = q_0^{(d-1)f}+\dots +q_0^f +1 > 2^{(d-1)f}.
$$
The above restrictions give bounds on $d$ and $f$. We first use the condition $(df+1)^2 > 2^{(d-1)f}$ to determine possible values for $d, f$, then for each of these we find the odd primes $p$ such that $p-1$ divides $df$ and $p^2 > 2^{(d-1)f}$. We list the possibilities in Table~\ref{tab: d and f}. 
\begin{table}[h]
\begin{tabular}{ c | c | c | c | c   }
$d$ & $f$ & $df$ & $p$   & $p-1$ \\ 
\hline
2 & 1, 2, 3, 4, 5, 6, 7, 8 &  2, 4, 6, 8, 10, 12, 14, 16 & 3, 5, 7, 11, 13, 17 & 2, 4, 6, 10, 12, 16 \\
3 & 1, 2, 3 & 3, 6, 9 &3, 7 &  2,  6 \\
4 & 1, 2  & 4, 8 &3, 5 &  2, 4 \\
5 & 1 & 5 & -- & -- \\
6 & 1 & 6 & 7 & 6 
\end{tabular}
\caption{Possibilities for $p$ given by $d$ and $f$}
\label{tab: d and f}
\end{table}

In particular, $p\in \{3,5, 7,11,13,17\}$. Note $k=p$ and $n<k$. Then $p=3$ implies that $n=2$ and hence $m=6$. The only solution to $m=(q^d-1)/(q-1)=6$ is given by $d=2$, $f=1$ and $q=5$. Hence $T=\PSL(2,5)$ as in row 5 of Table~\ref{table:2transAS-ii}. If $p=5$, then $n\in \{2,3,4\}$ and $m=kn \in \{10,15,20\}$.  From Table~\ref{tab: d and f} we have $d \in \{2,4\}$. If $d=4$, then the only solution is $q=2$, $T=\PSL(4,2)$ and $n=3$, as in row 13 of Table~\ref{table:2transAS-ii}. For $d=2$, we have $m=q+1$, and  hence $q+1 \in \{10,15,20\}$. If $q=19$, then $f=1$ and $p-1=4$ does not divide $df=2$, a contradiction. If $q=9$, then $n=2$ and $T=\PSL(2,9)$ as in row 7 of Table~\ref{table:2transAS-ii}.

For $p\in \{7, 11, 13, 17\}$, using the bound $p^2 > q_0^{(d-1)f}$,  the possible values for $q_0$ and $m$ informed  by the possible values of $p$ and the pair $(d,f)$ are given in Table~\ref{tab: p q}. If a case leads to a possibility, we give a  reference to Table~\ref{table:2transAS-ii} , and otherwise an explanation of the contradiction that arises.
\begin{table}[h]
\begin{tabular}{c | c | c  | c | l  }
$p$  & $q_0$  & $(d,f)$ & $m$ & notes \\
\hline
7 & 2 &  (2,3)  & 9  &  a contradiction to $p \mid m$ \\
  &    &   (2,6)  &   65 &  a contradiction to $p \mid m$ \\
  &   & (3,2)  &  21 & row 12 of Table~\ref{table:2transAS-ii} \\
  &    &   (6,1)  & 63 & a contradiction to $n  < p$ \\
  & 3   & (2,3) & 28 &  row 9 of Table~\ref{table:2transAS-ii} \\
11& $ 2$  & (2,5) & 33 & row 10 of Table~\ref{table:2transAS-ii} \\
13 & 2 & (2,6) & 65 & row 11 of Table~\ref{table:2transAS-ii}  \\
17 & 2 & (2,8)  & 257  & a contradiction to $p \mid m$
\end{tabular}\caption{Possibilities for $m$ and $n$} \label{tab: p q}
\end{table}

\emph{Subcase 2.2.} Finally suppose that $(p,s) = (3, 1)$. Then  $\ell < 5$ by \eqref{eq:l}; since $\ell_p = \ell_3 = 3^s = 3$ we have $\ell = 3$ and $d=3c$.  Now a Sylow 3-subgroup of $\GL(d,q)=\GL(3c,q)$ is contained in   the stabiliser of a direct sum decomposition of the underlying vector space into three $c$-dimensional subspaces. This stabiliser is isomorphic to $\GL(c,q) \wr \sym(3)$, and since $p=3$ is a primitive prime divisor of $q^c-1$ with $(q^c-1)_3=3^r$, a Sylow $3$-subgroup is isomorphic to $C_{3^r} \wr C_3$. Note that the elementary abelian subgroups of $C_{3^r} \wr C_3$ have order at most $3^3$. Since $c>1$, we can choose an elementary abelian preimage of $C_3^e \cong \soc(P^\rho)$ in $\GL(d,q)$. Hence $e \leqslant 3$ and $k\in \{3,9,27\}$.  Since $k^2>m$, if $c\geqslant 3$, then  $m \geqslant 2^9-1$, but $2^9-1 > 27^2$, a contradiction. Hence $c=2$ (since $c>1$) and  $d=3c=6$. Now since $p=3$ divides $q^c-1$, we have $q=2$ or $q \geqslant 5$. The latter case contradicts $m < k^2$, hence $q=2$. Hence $T=\PSL(6,2)$ and $kn=63$, which forces  $k = 9$, $n = 7$, as in row 15 of Table~\ref{table:2transAS-ii}.
\end{proof}

\begin{thm}[Theorem~\ref{thm:2trans}(3)]
\label{thm:finish 2-trans}
Suppose that $2 \leqslant n < k$ and $k=p^e$ for some prime $p$. Let $P$ be a $2$-transitive affine group of degree $k$ and let $G=\sh(P,n)$. If $n=p^f$ for some integer $f$, then $G$ is an affine group. Otherwise,   $ \alt(kn) \leqslant G $.
\end{thm}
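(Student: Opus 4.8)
The plan is to handle the two conclusions of the statement separately, leaning on machinery already assembled. For the power case, suppose $n=p^f$. Then $(k,n)=(p^e,p^f)$ lies in the power case with base prime $\ell=p$, and since $P$ is affine, Proposition~\ref{prop:affine} applies and yields $G=\sh(P,p^f)\leqslant \AGL(e+f,p)$; in particular $G$ is affine, and there is nothing further to prove.

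For the second conclusion, assume $n$ is not a power of $p$; the goal is to show $\alt(kn)\leqslant G$. By Lemma~\ref{lem: 2trans-k>n} the group $G$ is $2$-transitive on $[kn]$, so I would argue by contradiction: suppose $\alt(kn)\nleqslant G$. First I would show $G$ is not affine. If it were, then $kn$ would be a power of a single prime; since $p\mid k\mid kn$, that prime must be $p$, forcing $n$ to be a power of $p$ — contrary to hypothesis. Hence, by Burnside's theorem, $G$ is almost simple; put $T=\soc(G)$, a nonabelian finite simple group. By Lemma~\ref{lem:must be in table}, $T$ appears in Table~\ref{table:2transAS}; and because $\alt(kn)\nleqslant G$ we have $T\neq\alt(kn)$, so row~7 is excluded and $T$ is of one of the types in rows~1--6.

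At this point the four preceding lemmas do the real work. Lemmas~\ref{lem:2-trans Sp}, \ref{lem:2-trans sz}, \ref{lemma 2-trans Ree PSU} and~\ref{lem: 2-trans PSL} together assert that if $T=\soc(G)$ is of one of the types in rows~1--6 of Table~\ref{table:2transAS}, then the triple $(T,k,n)$ must be one of the fifteen lines of Table~\ref{table:2transAS-ii}. It then remains only to rule these out. The line with $(k,n)=(4,2)$ has $n=2=p$, so it falls outside our standing hypothesis and cannot occur here. For each of the other fourteen lines, $k$ and $n$ are small and explicit, so I would run through all $2$-transitive affine subgroups $P\leqslant\sym(k)$ and compute $\sh(P,n)$ with {\sc Magma}~\cite{magma}, finding in every case that $\sh(P,n)$ contains $\alt(kn)$. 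This contradicts $G$ being almost simple with socle $T$, and so completes the proof that $\alt(kn)\leqslant G$.

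The genuine difficulty is not in assembling this theorem but in the ingredients already in hand, namely the elimination of the six infinite families of Table~\ref{table:2transAS} inside Lemmas~\ref{lem:2-trans Sp}--\ref{lem: 2-trans PSL}. There the main obstacles are: using $\soc(P^\rho)\leqslant T$ (Lemma~\ref{lemma:socP}) together with Zsigmondy's theorem (Lemma~\ref{lem:zsig}) and the $p$-part estimates (Lemma~\ref{lemma:p-part}) to force $k=p$ to be a primitive prime divisor of the relevant $q^d-1$, and then bounding $p-1$ by the order of the field/diagonal automorphisms of $T$ to cut the parameters down to the finite list of Table~\ref{table:2transAS-ii}. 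Relative to that, the deduction of the theorem is routine bookkeeping plus a finite computational check.
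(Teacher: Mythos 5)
Your proposal is correct and follows essentially the same route as the paper's own proof: Proposition~\ref{prop:affine} for the power case, then $2$-transitivity, non-affineness of $G$ (since $kn$ is not a prime power), Burnside, Lemma~\ref{lem:must be in table}, the reduction to Table~\ref{table:2transAS-ii} via Lemmas~\ref{lem:2-trans Sp}--\ref{lem: 2-trans PSL}, and a {\sc Magma} check of the surviving cases. The only cosmetic differences are that you phrase the endgame as a contradiction with $\alt(kn)\nleqslant G$ rather than concluding $T=\alt(kn)$ directly, and you explicitly note that the $(k,n)=(4,2)$ line of Table~\ref{table:2transAS-ii} is excluded by the hypothesis $n\neq p^f$ rather than by computation.
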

\begin{proof}
If $n=p^f$ for an integer $f$, then Proposition~\ref{prop:affine} shows that $G$ is affine. Thus we may assume $n\neq p^f$ for any $f$. Now Lemma~\ref{lem:2trans-aff} shows that $G$ is 2-transitive, and since the degree of $G$ is $kn$, which is not a prime power, $G$ cannot be affine. Thus Burnside's Theorem \cite[Theorem 4.3]{cameronpermgroups} shows that $G$ is almost simple. Lemma~\ref{lem:must be in table} shows that   $T:=\mathrm{soc}(G)$ appears in Table~\ref{table:2transAS}. If $T \neq \alt(kn)$ then by Lemmas~\ref{lem:2-trans Sp}, \ref{lem:2-trans sz}, \ref{lemma 2-trans Ree PSU} and \ref{lem: 2-trans PSL}, $T$, $k$ and $n$ must appear in Table~\ref{table:2transAS-ii}. For each possible $T$ and value of $k$ and $n$, we obtain a contradiction using {\sc Magma}. Hence $T=\alt(kn)$ as required.
\end{proof}

\section{Cascading shuffle groups}
\label{sec:cascading}

Throughout this section we assume that $k = 2^e$ for some positive integer $e$ and that $n \geqslant 2$.
For each $a \in [k] = \{0, 1, \ldots, 2^e-1\}$, write $a$ in its base-$2$ expansion,
	\[ 
	a = \sum_{r=0}^{e-1} a_r 2^{r}, 
	\]
with each $a_i\in\{0,1\}$. For each $s \in [e]$, let $v_s$ be the permutation on $[k]$ defined by
	\begin{equation} \label{eq:v-action}
	a^{v_s} = \sum_{r=0}^{e-1} \overline{a_r}\, 2^{r}, \ \text{where} \ 
	\overline{a_r} = 
	\begin{cases} a_r &\text{if } r \neq s, \\ 1-a_s &\text{if } r = s. \end{cases}
	\end{equation}
Clearly $v_s$ has order $2$, and $v_r$, $v_s$ commute for all $r,s$. Hence 
	\begin{equation} \label{eq:V}
	V_{e} = \langle v_0, \ldots, v_{e-1} \rangle
	\end{equation}
is an elementary abelian subgroup of $\sym(k)$ of order $2^e$. We now extend these definitions and notation for arbitrary $2^t\leqslant 2^e$.

For each $t=1, \dots, e$, we can divide the cards in $ [2^e n]$ into $2^t$ piles with $2^{e-t}n$ cards in each pile. 
We define $V_{t}$ as in \eqref{eq:V} with $t, 2^t$ in place of $e, k$, namely, for $s\in [t]$, we define $v_{t,s}$ on 
$a=\sum_{r=0}^{t-1}a_r 2^{r}\in [2^t]$ by 
\begin{equation} \label{eq:vt-action}
	a^{v_{t,s}} = \sum_{r=0}^{t-1} \overline{a_r}\, 2^{r}, \ \text{where} \ 
	\overline{a_r} = 
	\begin{cases} a_r &\text{if } r \neq s, \\ 1-a_s &\text{if } r = s. \end{cases}
	\end{equation}
and then
\[
V_t = \langle v_{t,0}, \dots, v_{t,t-1}\rangle \leqslant\sym(2^t)
\]
is elementary abelian of order $2^t$, and the corresponding shuffle group $G_t$ is defined as
	\begin{equation} \label{eq:G_t}
	G_t = \sh\big(V_{t},2^{e-t}n\big).
	\end{equation}
Note that $G_t \leqslant \sym(2^e n)$. Thus the $e$ shuffle groups $G_1, \ldots, G_e$ all act on the same set $[2^e n ]$. Also $G_1 = \sh\big(V_1,2^{e-1}n\big) = \sh\big(\sym(2),2^{e-1}n\big)$, and so $G_1$ is known and is  described by Theorem~\ref{dgk}.

Let $\sigma_t$ denote the shuffle in $G_t$ defined as in \eqref{eq: s} with $2^t, 2^{e-t}n$ in place of $k, n$ respectively.  Let $\sigma = \sigma_1$. We describe in Lemma~\ref{lem:power of s} the relationship between $\sigma$ and the shuffle 
$\sigma_t$ for an arbitrary $t$. 
To describe the other generators for $G_t$ as in Definition~\ref{def:sh}, we need to consider the analogue of the embedding $\rho: \sym(k)\rightarrow 
\sym(kn)$ defined in \eqref{eq: rho}.  Since this map is analogous to $\rho$, we denote it by $\rho_t: \sym(2^t)\rightarrow 
\sym(2^en)$ (so $\rho_e=\rho$), and define for each $\tau\in\sym(2^t)$ the permutation $\rho_{t,\tau}\in\sym(2^en)$ as follows (similarly to \eqref{eq: rho}).
\begin{equation}\label{eq:rhot}
 (a_0(2^{e-t}n)+b_0)^{\rho_{t,\tau}} = a_0^\tau (2^{e-t}n) + b_0     \quad \mbox{for $a_0\in [2^t]$ and $b_0\in [2^{e-t}n]$.}
\end{equation}  
We show in Lemma~\ref{lem:power of s} that the subgroup $V_t$ is mapped by $\rho_t$ to a subgroup of $(V_e)\rho\leqslant G_e$, where $\rho=\rho_e$.

\begin{lem} \label{lem:power of s}
Assume that $k = 2^e$ for some positive integer $e$, and let $t \in \{1, \ldots, e\}$. Then for
$G_t$ the shuffle group as in \eqref{eq:G_t} we have:
\begin{enumerate}

\item[(a)] The shuffle $\sigma_t$ satisfies $\sigma_t = \sigma^t$; 

\item[(b)] for $s\in [t]$, the image $(v_{t,s})\rho_{t}=(v_{e-t+s})\rho$, and so 
$(V_t)\rho_t = (\langle v_{e-t},\dots, v_{e-1}\rangle)\rho \leqslant (V_e)\rho$;

\item[(c)] for $1\leqslant s\leqslant e-1$, $\rho_{v_{s-1}}\sigma = \sigma\rho_{v_s}$;

\item[(d)] the elements $x_r:=(v_{r})\rho$,  for $0\leqslant r\leqslant e-1$, are such that $x_r^\sigma = x_{r+1}$ for $r\leqslant e-2$, 
$(V_t)\rho_t =\langle x_{e-t},\dots,x_{e-1}\rangle$ for $t\leqslant e$, and $((V_t)\rho_t)^\sigma >  (V_{t-1})\rho_{t-1}$ for $t>1$.
\end{enumerate} 
\end{lem}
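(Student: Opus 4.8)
The plan is to verify each of the four claims (a)–(d) by direct computation with the formulas for $\sigma$ and $\rho_t$ established earlier, specifically \eqref{eq: s}, \eqref{eq: rho}, Lemma~\ref{lem:shuffle acts}, and the base-$2$ expansion description of $V_t$ in \eqref{eq:vt-action}. These are essentially bookkeeping statements about how the digit-shift action of $\sigma$ (which I know well from Lemma~\ref{lem:shuffle acts}, since $kn = 2^e n$ and $\sigma$ multiplies by $2$ modulo $2^en - 1$) interacts with the digit-flip generators of the various $V_t$, so the main work is setting up consistent notation for the digits of a card $i \in [2^en]$ and tracking where each digit goes.

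For part (a): I would compute $\sigma_t$ directly from \eqref{eq: s} with $2^t$ and $2^{e-t}n$ in place of $k$ and $n$. Writing $i = a(2^{e-t}n) + b$ with $a \in [2^t]$, $b \in [2^{e-t}n]$, formula \eqref{eq: s} gives $i^{\sigma_t} = b\cdot 2^t + a$. On the other hand, by Lemma~\ref{lem:shuffle acts} applied to the original deck, $\sigma : i \mapsto 2i \pmod{2^en - 1}$ for $i \ne 2^en-1$, so $\sigma^t : i \mapsto 2^t i \pmod{2^en-1}$; I would then check, exactly as in the proof of Lemma~\ref{lem:shuffle acts}, that $2^t i \equiv b\cdot 2^t + a \pmod{2^en-1}$ and that the right-hand side lies in $[2^en]$ whenever $i \ne 2^en-1$, handling $i = 0$ and $i = 2^en-1$ separately since both $\sigma_t$ and $\sigma^t$ fix them. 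This identifies $\sigma_t = \sigma^t$ as permutations.

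For part (b): take $i = a_0(2^{e-t}n) + b_0$ with $a_0 \in [2^t]$; by \eqref{eq:rhot}, $\rho_{t,v_{t,s}}$ flips the $s$-th binary digit of $a_0$. I must compare this with the action of $\rho_{v_{e-t+s}} = (v_{e-t+s})\rho$ on the full deck, which by \eqref{eq: rho} flips the $(e-t+s)$-th binary digit of the pile-index $a \in [2^e]$ where $i = an + b$, $b \in [n]$. The point is that dividing the same card set into $2^t$ piles of size $2^{e-t}n$ versus $2^e$ piles of size $n$ relates the two pile indices by $a_0 = \lfloor a/2^{e-t}\rfloor$ (the top $t$ binary digits of $a$), so digit $s$ of $a_0$ is digit $e-t+s$ of $a$; writing both flips out in terms of the binary digits of the card label $i$ itself shows they induce the same permutation. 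Then $(V_t)\rho_t = \langle (v_{t,0})\rho_t,\dots,(v_{t,t-1})\rho_t\rangle = \langle (v_{e-t})\rho,\dots,(v_{e-1})\rho\rangle \leqslant (V_e)\rho$ follows at once. Parts (c) and (d) are then formal consequences: (c) is the conjugation relation $\sigma^{-1}\rho_{v_{s-1}}\sigma = \rho_{v_s}$, which I would check by observing that $\sigma$ conjugates the digit-flip at position $s-1$ to the digit-flip at position $s$ — this is just the statement that $\sigma$ shifts binary digits by one place, combined with the identification of $\rho_{v_s}$ from part (b) — being careful about the indices at the two ends of the range (hence the restriction $1 \leqslant s \leqslant e-1$). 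For (d), set $x_r = (v_r)\rho$; the relation $x_r^\sigma = x_{r+1}$ for $r \leqslant e-2$ is exactly part (c) restated (with $s = r+1$), $(V_t)\rho_t = \langle x_{e-t},\dots,x_{e-1}\rangle$ is part (b), and the final containment $((V_t)\rho_t)^\sigma = \langle x_{e-t-1}^\sigma,\dots\rangle = \langle x_{e-t+1},\dots,x_e'\rangle$ — where I must describe $x_{e-1}^\sigma$, the one generator whose image is not simply $x_e$ — and show it strictly contains $(V_{t-1})\rho_{t-1} = \langle x_{e-t+1},\dots,x_{e-1}\rangle$.

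The main obstacle will be the edge case in parts (c) and (d) concerning what $\sigma$ does to the ``top'' generator $x_{e-1} = (v_{e-1})\rho$, i.e., the digit-flip at position $e-1$: since $\sigma$ cyclically shifts the full $(e+f)$-digit label (where $n = 2^f$ only in special cases — in general $n$ need not be a power of $2$, so I cannot use the clean product-action picture of Lemma~\ref{lem:pa-shuffle} and must argue modulo $2^en-1$ instead), the image $x_{e-1}^\sigma$ is a flip that acts on a digit position inside the ``$b$-part'' of the card label, and I need to check carefully that this still lies in $((V_t)\rho_t)^\sigma$ and that it together with $x_{e-t+1},\dots,x_{e-1}$ generates a group strictly larger than $(V_{t-1})\rho_{t-1}$ — the strictness coming precisely from this extra generator not lying in $\langle x_{e-t+1},\dots,x_{e-1}\rangle$. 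I would handle this by computing $x_{e-1}^\sigma$ explicitly as a permutation of $[2^en]$ via Lemma~\ref{lem:shuffle acts} and exhibiting a card it moves that is fixed by all of $x_{e-t+1},\dots,x_{e-1}$.
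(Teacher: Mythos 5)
Your overall plan follows the paper's proof: (a) is read off from Lemma~\ref{lem:shuffle acts}, (b) is proved exactly as you describe by writing a card as $an+b=a_0(2^{e-t}n)+b_0$ and matching digit $s$ of $a_0$ with digit $e-t+s$ of $a$, (c) is a direct computation with the doubling map modulo $2^en-1$, and (d) is formal. However, the step you single out as the ``main obstacle'' fails as stated. You propose to prove the strict containment in (d) by exhibiting a card moved by $x_{e-1}^{\sigma}$ but fixed by all of $x_{e-t+1},\dots,x_{e-1}$. No such card exists: each $x_r=\rho_{v_r}$ is fixed-point-free on $[2^en]$, since $v_r$ flips binary digit $r$ of the pile index, so $a^{v_r}\ne a$ for every $a\in[2^e]$ and hence $(an+b)^{x_r}=a^{v_r}n+b\ne an+b$ for every card; thus for $t\geqslant 2$ the common fixed-point set of these generators is empty. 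The repair is immediate, and no computation of $x_{e-1}^{\sigma}$ is needed (this is why the paper disposes of (d) as an immediate consequence of (b) and (c)): $\rho_t$ embeds $V_t$, so $((V_t)\rho_t)^{\sigma}$ has order $2^t$, while by (c) it contains $\langle x_{e-t+1},\dots,x_{e-1}\rangle=(V_{t-1})\rho_{t-1}$, of order $2^{t-1}$, whence the containment is strict. (If you insist on a witness, use instead that every element of $(V_{t-1})\rho_{t-1}$ preserves the row index $b$ of every card $an+b$, whereas $x_{e-1}^{\sigma}$ sends $0$ to $1$.)

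Correspondingly, the real weight of the lemma sits in (c), which your sketch compresses into ``$\sigma$ shifts binary digits by one place''; that statement is only a heuristic here, since $n$ is not a power of $2$ and the card labels carry no clean digit structure. What actually has to be proved is that for $x=an+b$ with $x\ne 2^en-1$ one has $2x\equiv\bigl(\sum_{r=1}^{e-1}a_{r-1}2^{r}+b_1\bigr)n+(2b-b_1n+a_{e-1})\pmod{2^en-1}$, where $b_1\in\{0,1\}$ is a carry determined by $b$ (and by $a_{e-1}$ in the borderline case $2b=n-1$): the top pile digit $a_{e-1}$ drops into the row part, and the carry from doubling the row becomes the new bottom pile digit. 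Granting this, the identity $\rho_{v_{s-1}}\sigma=\sigma\rho_{v_s}$ holds because flipping digit $s-1\leqslant e-2$ changes neither $b$ nor $a_{e-1}$, hence not $b_1$ either --- this is exactly where the restriction $1\leqslant s\leqslant e-1$ enters --- and one must separately check the cards where the doubling formula is unavailable, namely $x=2^en-1$ and those $x$ with $x^{\rho_{v_{s-1}}}=2^en-1$. Your plan points in the right direction (``argue modulo $2^en-1$''), but this carry analysis and the two exceptional cards are the actual content of the paper's proof of (c), so they need to be carried out explicitly rather than subsumed under a digit-shift slogan.
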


\begin{proof}
(a) By Lemma \ref{lem:shuffle acts}, $\sigma$ fixes $0$ and $2^e n - 1$, and $i^\sigma = 2i \pmod{2^e n - 1}$ for all other $i \in \big[2^e n\big]$. Likewise $\sigma_t$ fixes $0$ and $2^e n - 1$, and  $i^{\sigma_t} = 2^t i \pmod{2^e n - 1} = i^{\sigma^t}$ for all other $i \in \big[2^e n\big]$. Therefore $\sigma_t = \sigma^t$.

(b) Let $x\in [kn]$.  We may write $x=an+b = a_0(2^{e-t}n)+b_0$ for unique $a\in [2^e], b\in [n], a_0\in[2^t], b_0\in[2^{e-t}n]$. Note that 
$b_0=b+b_1n$ for (a unique) $b_1\in[2^{e-t}]$, and $a=a_02^{e-t}+b_1$.  Write the base-$2$ expansions of $a_0$ and $b_1$ as 
\[
a_0=\sum_{r=0}^{t-1}a_{0,r}2^{r}\quad \mbox{and}\quad b_1=\sum_{r=0}^{e-t-1}b_{1,r}2^{r},
\] 
and note that
\[
\sum_{r=e-t}^{e-1}a_{0,r-e+t}2^{r} + \sum_{r=0}^{e-t-1}b_{1,r}2^{r} = \sum_{r=0}^{t-1}a_{0,r}2^{e-t+r} + \sum_{r=0}^{e-t-1}b_{1,r}2^{r}=a_02^{e-t}+b_1
\]
is the base-$2$ expansion of $a=a_02^{e-t}+b_1$. 
By \eqref{eq:vt-action}, setting $\overline{a_{0,r}} = a_{0,r}$ if $r \neq s$, and  $\overline{a_{0,r}} = 1-a_{0,r}$ if $r=s$, we have
\[
(a_0^{v_{t,s}})2^{e-t}+b_1 = \sum_{r=0}^{t-1}\overline{a_{0,r}}2^{e-t+r}\ +\ \sum_{r=0}^{e-t-1}b_{1,r}2^{r},
\]
and since $s\leqslant t-1$ and $b_1\leqslant 2^{e-t}$, it follows from \eqref{eq:v-action} that this expression is the image of $a$ under the action of $v_{e,e-t+s}$. Hence, by \eqref{eq:rhot}, 
\begin{align*}
x^{\rho_{t,v_{t,s}}} &= a_0^{v_{t,s}}(2^{e-t}n)+b_0\\
&= (a_0^{v_{t,s}}2^{e-t}+b_1)n+b\\
&= a^{v_{e,e-t+s}}n+b\\
&= x^{\rho_{v_{e-t+s}}}.
\end{align*}
We conclude that $(v_{t,s})\rho_t=\rho_{t,v_{t,s}}= \rho_{v_{e-t+s}}=(v_{e-t+s})\rho$, and part (b) follows.

(c) Let $1\leqslant s\leqslant e-1$, and let $x=an+b\in [2^en]$ with $a\in[2^e]$ and $b\in [n]$. We will show that the images of $x$ under 
$\rho_{v_{s-1}}\sigma$  and $\sigma\rho_{v_s}$ are equal. Write the base-$2$ expansion of $a$ as 
\[
a=\sum_{r=0}^{e-1}a_{r}2^{r},
\] 
and define 
\[
b_1=\begin{cases}
	1 & \mbox{if either $b\geqslant n/2$,	}	\mbox{or $b=(n-1)/2$ and $a_{e-1}=1$,} \\
	0 & \mbox{if either $b\leqslant (n-2)/2$,}\ \mbox{or $b=(n-1)/2$ and $a_{e-1}=0$.} \\  
\end{cases} 
\]
By Lemma \ref{lem:shuffle acts}, $\sigma$ fixes $2^e n - 1$, and $i^\sigma = 2i \pmod{2^e n - 1}$ for all other $i \in \big[2^e n\big]$. 
We claim that, for $x\ne 2^en-1$, 
\begin{equation}\label{xs}
x^\sigma = 2x = (\sum_{r=1}^{e-1}a_{r-1}2^{r} + b_1) n + (2b-b_1n+a_{e-1}) \pmod{2^en-1}
\end{equation}
with $\sum_{r=1}^{e-1}a_{r-1}2^{r} + b_1$ the base-$2$ expansion of an element in $[2^e]$, and $2b-b_1n+a_{e-1}\in [n]$. Note that $2an=
(\sum_{r=0}^{e-1}a_{r}2^{r+1})n= a_{e-1} + (\sum_{r=1}^{e-1}a_{r-1}2^{r})n  \pmod{2^en-1}$, and hence equality holds in \eqref{xs},
and  since $b_1\in\{0,1\}$, $\sum_{r=1}^{e-1}a_{r-1}2^{r} + b_1$ is the base-$2$ expansion of an element in $[2^e]$. If $b\geqslant n/2$ then 
$0\leqslant 2b-b_1n+a_{e-1} \leqslant 2(n-1)-n+1=n-1$; if $b\leqslant (n-2)/2$ then $0\leqslant 2b-b_1n+a_{e-1} \leqslant (n-2)+1=n-1$; and finally if $b=(n-1)/2$, then 
$(a_{e-1},b_1)$ is either $(1,1)$ or $(0,0)$ and $2b-b_1n+a_{e-1}$ is $0$ or $n-1$ respectively. This proves the claim. 
Since $s\geqslant1$, this implies that 
\begin{equation}\label{xsr}
x^{\sigma(v_s)\rho}  = (\sum_{1\leqslant r\leqslant e-1, r\ne s}a_{r-1}2^{r} + (1-a_{s-1})2^s + b_1) n + (2b-b_1n+a_{e-1}).
\end{equation}
Also, provided that $x^{(v_{s-1})\rho}\ne 2^en-1$, we may apply \eqref{eq:v-action} and \eqref{xs} to obtain 
\begin{align*}
x^{(v_{s-1})\rho\sigma} &= \left(    (\sum_{0\leqslant r\leqslant e-1, r\ne s-1}a_{r}2^{r} + (1-a_{s-1})2^{s-1}) n+b\right)^\sigma \\
		&=   (\sum_{1\leqslant r\leqslant e-1, r\ne s-1}a_{r-1}2^{r} + (1-a_{s-1})2^{s}+b_1) n +  (2b-b_1n+a_{e-1})
		= x^{\sigma(v_s)\rho}.
\end{align*}
On the other hand, if $x^{(v_{s-1})\rho}= 2^en-1$, then $x= (\sum_{0\leqslant r\leqslant e-1, r\ne s-1}2^r )n+(n-1)$, $b=n-1, b_1=1, a_{e-1}=1$, and by \eqref{xsr},
\begin{align*}
x^{\sigma(v_s)\rho} &=\left((\sum_{1\leqslant r\leqslant e-1, r\ne s}2^{r}) + 2^s + b_1\right) n + (2b-b_1n+a_{e-1})\\
	&=  (\sum_{0\leqslant r\leqslant e-1}2^{r}) n + (n-1) = 2^en-1 = x^{(v_{s-1})\rho}.
\end{align*}
It remains to consider actions on $x=2^en-1 = 
(\sum_{r=0}^{e-1}2^r)n+(n-1)$. Here we have
\[
x^{\sigma(v_s)\rho}  = x^{(v_s)\rho} = (\sum_{0\leqslant r\leqslant e-1, r\ne s}2^{r}) n + (n-1),
\]
while, using the fact that $x^{(v_{s-1})\rho}\ne 2^en-1$, we have
\begin{align*}
x^{(v_{s-1})\rho\sigma} &=  2\left(   (\sum_{0\leqslant r\leqslant e-1}2^{r} - 2^{s-1}) n + n-1\right)\pmod{2^en-1} \\
		&=   (\sum_{0\leqslant r\leqslant e-1}2^{r+1}  -2^s) n + 2(n-1)\\
		&= (\sum_{0\leqslant r\leqslant e-1, r\ne s}2^{r}) n + n-1\pmod{2^en-1} = x^{\sigma(v_s)\rho} .
\end{align*}
This completes the proof of part (c). Part (d) follows immediately from parts (b) and (c).
\end{proof}

By Lemma~\ref{lem:power of s}(b)  we have the following subgroup chain in $G_e$: 
	\[ 
	V_1\rho_1  \leqslant \dots \leqslant V_{t}\rho_t  \leqslant \dots \leqslant  V_{e}\rho. \]

We now consider how the subgroups $G_1$, \dots, $G_e$ are related.

\begin{lem}
\label{lem:containment}
For $1 \leqslant t \leqslant e$ we have
$$G_t = \langle x_{e-t},\dots,x_{e-1}, \sigma^t \rangle.
$$
Moreover, $G_t\leqslant G_1$, and if $t<e$ then $G_t$ normalises $G_{t+1}$. 
\end{lem}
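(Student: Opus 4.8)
The plan is to unwind the definition of $G_t$ and identify the generators using Lemma~\ref{lem:power of s}. By Definition~\ref{def:sh}, $G_t = \sh(V_t, 2^{e-t}n) = \langle \sigma_t, \rho_{t,\tau} \mid \tau \in V_t\rangle$. First I would use Lemma~\ref{lem:power of s}(a) to replace $\sigma_t$ by $\sigma^t$, and Lemma~\ref{lem:power of s}(b) (or equivalently (d)) to identify $(V_t)\rho_t$ with $\langle x_{e-t},\dots,x_{e-1}\rangle$, where $x_r = (v_r)\rho$. Since $V_t$ is generated by $v_{t,0},\dots,v_{t,t-1}$ and $(v_{t,s})\rho_t = (v_{e-t+s})\rho = x_{e-t+s}$, the set $\{\rho_{t,\tau}\mid\tau\in V_t\}$ generates the same group as $\{x_{e-t},\dots,x_{e-1}\}$. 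This immediately gives
\[
G_t = \langle x_{e-t},\dots,x_{e-1}, \sigma^t\rangle,
\]
which is the first assertion.

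Next I would prove $G_t \leqslant G_1$. Taking $t=1$ in the formula just established (and using $\sigma = \sigma_1 = \sigma^1$), we have $G_1 = \langle x_{e-1}, \sigma\rangle$. By Lemma~\ref{lem:power of s}(d), $x_r^\sigma = x_{r+1}$ for $r \leqslant e-2$; equivalently $x_{r} = x_{r+1}^{\sigma^{-1}}$, so by downward induction every $x_r$ with $0 \leqslant r \leqslant e-1$ lies in $\langle x_{e-1}, \sigma\rangle = G_1$. Since also $\sigma^t \in G_1$, every generator of $G_t$ lies in $G_1$, hence $G_t \leqslant G_1$.

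Finally I would show that $G_t$ normalises $G_{t+1}$ when $t < e$. It suffices to check that each generator $x_{e-t}, \dots, x_{e-1}, \sigma^t$ of $G_t$ normalises $G_{t+1} = \langle x_{e-t-1}, x_{e-t}, \dots, x_{e-1}, \sigma^{t+1}\rangle$. For the generator $\sigma^t$: conjugation by $\sigma^t$ fixes $\sigma^{t+1}$ and, by Lemma~\ref{lem:power of s}(d), sends $x_r$ to $x_{r+t}$ (reading the shift as iterating $x_r^\sigma = x_{r+1}$); one checks that the conjugates $x_{e-t-1}^{\sigma^t}, \dots, x_{e-1}^{\sigma^t}$ all lie in $G_{t+1}$, using the relation to re-express indices that exceed $e-1$ — here the key input is that $\sigma$ has order $e$ on the relevant configuration (Lemma~\ref{lem:power of s}), so the indices are read cyclically, and in fact each conjugate already lies in $\langle x_{e-t-1},\dots,x_{e-1}\rangle^{\sigma^{t+1}} \leqslant G_{t+1}$ since $\sigma^{t+1}\in G_{t+1}$. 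For the generators $x_{e-t}, \dots, x_{e-1}$: these already lie in $G_{t+1}$ (as its generators), so it suffices to note that $G_{t+1}$ is generated by $\langle x_{e-t-1},\dots,x_{e-1}\rangle$ together with $\sigma^{t+1}$, that $\langle x_{e-t-1},\dots,x_{e-1}\rangle = (V_{t+1})\rho_{t+1}$ is elementary abelian (hence normalised by each $x_j$), and that $\sigma^{t+1}$ is centralised modulo this elementary abelian group by the $x_j$ — more precisely, $x_j^{-1}\sigma^{t+1}x_j \in \sigma^{t+1}\cdot\langle x_{e-t-1},\dots,x_{e-1}\rangle$ by repeated application of Lemma~\ref{lem:power of s}(c), which gives $\rho_{v_{s-1}}\sigma = \sigma\rho_{v_s}$, i.e. $x_{s-1}^{\sigma} = x_s$, so conjugating $\sigma^{t+1}$ by $x_j$ produces $\sigma^{t+1}$ times a product of $x_i$'s with indices in the range $\{e-t-1,\dots,e-1\}$ (provided $j \geqslant e-t$, which holds for the generators of $G_t$, so that $j - (t+1) \geqslant e-t-1-(t+1)+\ldots$ stays in range).

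I expect the main obstacle to be the last step — verifying cleanly that the generators $x_{e-t},\dots,x_{e-1}$ of $G_t$ normalise $G_{t+1}$, because it requires careful bookkeeping of index ranges when applying the commutation relation of Lemma~\ref{lem:power of s}(c) and the conjugation action of $\sigma$ from Lemma~\ref{lem:power of s}(d). The cleanest formulation is probably to note that $G_{t+1} = \langle (V_{t+1})\rho_{t+1}, \sigma^{t+1}\rangle$ with $(V_{t+1})\rho_{t+1}$ elementary abelian and normal in $G_{t+1}$, and then to show directly that conjugation by $x_j$ (for $e-t \leqslant j \leqslant e-1$) fixes $(V_{t+1})\rho_{t+1}$ pointwise and sends $\sigma^{t+1}$ into $\sigma^{t+1}(V_{t+1})\rho_{t+1}$; the constraint $j \geqslant e-t$ (as opposed to $j \geqslant e-t-1$) is exactly what keeps the resulting index shifts inside $\{e-t-1,\dots,e-1\}$, and this is where the hypothesis that we conjugate by generators of $G_t$ rather than $G_{t+1}$ is used.
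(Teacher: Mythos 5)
The first two assertions are handled exactly as in the paper: Lemma~\ref{lem:power of s}(a) replaces $\sigma_t$ by $\sigma^t$, part (b) identifies $(V_t)\rho_t$ with $\langle x_{e-t},\dots,x_{e-1}\rangle$, and part (d) gives $x_r=x_{r+1}^{\sigma^{-1}}$, whence $G_t=\langle x_{e-t},\dots,x_{e-1},\sigma^t\rangle\leqslant\langle x_{e-1},\sigma\rangle=G_1$. Your overall architecture for the normalisation step also matches the paper's, but the write-up of that step rests in part on assertions that are false. The claim you call ``the key input'' --- that $\sigma$ has order $e$ and that the indices of the $x_r$ are read cyclically --- is wrong: Lemma~\ref{lem:power of s}(d) gives $x_r^{\sigma}=x_{r+1}$ only for $r\leqslant e-2$, nothing is asserted about $x_{e-1}^{\sigma}$ (it is not $x_0$ in general), and $\sigma$ has order equal to the multiplicative order of $2$ modulo $2^en-1$, not $e$. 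Fortunately you then state the correct mechanism, which is the paper's: for $e-t\leqslant j\leqslant e-1$ one has $x_j^{\sigma^t}=(x_{j-1})^{\sigma^{t+1}}\in G_{t+1}$ since $x_{j-1}\in\{x_{e-t-1},\dots,x_{e-2}\}\subseteq G_{t+1}$ and $\sigma^{t+1}\in G_{t+1}$, while for the remaining generator one computes directly $x_{e-t-1}^{\sigma^t}=x_{e-1}\in G_{t+1}$ (note that your blanket claim that \emph{every} conjugate lies in $\langle x_{e-t-1},\dots,x_{e-1}\rangle^{\sigma^{t+1}}$ fails for this last index, as it would force $x_{e-t-2}$ into $\langle x_{e-t-1},\dots,x_{e-1}\rangle$).

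The second problem is the long discussion of why the generators $x_{e-t},\dots,x_{e-1}$ of $G_t$ normalise $G_{t+1}$. This is superfluous: as you note at the outset, these elements lie in $G_{t+1}$, and any element of a group normalises that group --- this is precisely the paper's one-line reduction to checking only $\sigma^t$. Worse, the specific statement you make there, namely $x_j^{-1}\sigma^{t+1}x_j\in\sigma^{t+1}\,(V_{t+1})\rho_{t+1}$, is false in general: for example with $j=e-1$ and $t\leqslant e-2$ one gets $x_{e-1}^{-1}\sigma^{t+1}x_{e-1}=x_{e-1}x_{e-t-2}\sigma^{t+1}$, and $x_{e-t-2}\notin\langle x_{e-t-1},\dots,x_{e-1}\rangle$ because the $x_r$ are independent generators of the elementary abelian group $(V_e)\rho\cong C_2^e$. (The product does of course lie in $G_{t+1}$, but that is the trivial observation above, not the coset claim.) So the correct proof is recoverable from what you wrote --- delete the cyclicity claim and the redundant $x_j$-conjugation analysis, and keep the reduction to $\sigma^t$ together with the rewriting $x_j^{\sigma^t}=(x_{j-1})^{\sigma^{t+1}}$ --- but as it stands the normalisation argument mixes correct and incorrect justifications.
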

\begin{proof}
By definition, $G=\sh(V_t,n) = \langle (V_t)\rho_t, \sigma_t \rangle$. By Lemma~\ref{lem:power of s} part (a) we have $\sigma_t=\sigma^t$ and by part (c) we have $(V_t)\rho = \langle x_{e-t},\dots,x_{e-1} \rangle$. This gives the first part of the lemma. For the next assertion, we may assume $e \geqslant 2$. 
By Lemma~\ref{lem:power of s}(d), for $1 \leqslant i \leqslant t$ we have
$$x_{e-i} = (x_{e-1})^{\sigma^{-i+1}} \in G_1$$
and clearly $\sigma^t \in G_1$, so the inclusion $G_t\leqslant G_1$ follows. Finally we check that $G_t$ normalises $G_{t+1}$. 
Since $G_{t+1}$ contains $x_{e-t},\dots,x_{e-1}$,  we only need  to check that $ \sigma^t $ normalises $G_{t+1}$. 
Now $\sigma^t$ centralises $\sigma^{t+1}$ and, by Lemma~\ref{lem:power of s}(d),  
$$
(x_{e-t-1})^{\sigma^t} =  x_{e-1} \in G_{t+1}.
$$
 For $1 \leqslant i \leqslant t$, we   use Lemma~\ref{lem:power of s}(d) to write $x_{e-i} = (x_{e-i-1})^\sigma$, and then we have
$$
(x_{e-i})^{\sigma^t} = ((x_{e-i-1})^\sigma)^{\sigma^t} = (x_{e-i-1})^{\sigma^{t+1}} \in G_{t+1}
 $$
which completes the proof.
\end{proof}

We now prove Theorem~\ref{thm:k=2^e} which we restate for convenience. We use notation from Theorem~\ref{dgk}.
 
\setcounter{section}{1}
\setcounter{thm}{8}

\begin{thm}
\label{lem:identify Gr}
Suppose that $n$ is not a power of $2$ and $k=2^e$ with $e\geqslant 2$. The following hold:
\begin{enumerate}
\item if $(k,n)=(4,3)$ then $G_1 = C_2^6\rtimes \sym(5)$ and $G_2 = C_2^5 \rtimes \sym(5)$;
\item if $(k,n)=(4,6)$ then $G_1 =G_2$ and $G_1 \cong C_2^{11} \rtimes M_{12}$;
\item if $(k,n)=(8,3)$ then $G_1 =G_2= G_3$ and $G_1 \cong C_2^{11} \rtimes M_{12}$;
\item if $k=4$ and $n\geqslant 5$ is odd then $G_1 = C_2 \wr \sym(2n)$ and $G_2 = \ker(\mathrm{sgn})$;
\item in all other cases, $G_1 = G_2 = \dots = G_e$ and $G_1 =\ker(\mathrm{sgn}) \cap \ker( \overline{\mathrm{sgn}})$.
\end{enumerate}
%
\end{thm}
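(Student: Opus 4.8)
The overall strategy is to reduce everything to the case $G_1 = \sh(\sym(2), 2^{e-1}n)$, whose structure is given by Theorem~\ref{dgk}, and then to use Lemma~\ref{lem:containment}, Theorem~\ref{thm:prim}, and the classification of maximal subgroups to pin down each $G_t$ inside $G_1$. First I would record, using Lemma~\ref{lem:containment}, that for every $t$ we have $G_t \leqslant G_1$, and that $G_1$ normalises each $G_t$ (iterating the normalisation statement). Since $n$ is not a power of $2$, the value of $2^{e-1}n$ is never a power of $2$, so $G_1$ is one of the groups in rows 2--7 of Table~\ref{tab:the shuffle group}; in particular $G_1$ is either $B_n$ or one of its index-$2$ or index-$4$ subgroups of the form $\ker(\mathrm{sgn})$, $\ker(\overline{\mathrm{sgn}})$, $\ker(\mathrm{sgn}\,\overline{\mathrm{sgn}})$, $\ker(\mathrm{sgn})\cap\ker(\overline{\mathrm{sgn}})$, or (for $2^{e-1}n \in \{6,12\}$) one of the two exceptional groups. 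I would then split into the small exceptional cases $(k,n)\in\{(4,3),(4,6),(8,3)\}$ — where $2^{e-1}n\in\{6,12,12\}$ and $G_1$ is determined directly from Table~\ref{tab:the shuffle group} — handling these by explicit computation in {\sc Magma} (as the paper does elsewhere), verifying the claimed isomorphism types of $G_1, G_2$ and, when $e=3$, also $G_3$, and checking the chain of equalities $G_1 = G_2 = \dots = G_e$.

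For the generic cases I would argue as follows. We always have $G_e = \sh(V_e, n) \leqslant G_1$, and each intermediate $G_t$ satisfies $G_e \leqslant G_t \leqslant G_1$ by Lemma~\ref{lem:containment}. The key is to show that $G_e$ already equals the target group $\ker(\mathrm{sgn})\cap\ker(\overline{\mathrm{sgn}})$ (resp.\ the appropriate group in case (4)), which forces all $G_t$ to coincide with it. For the lower bound I would compute the minimal degree: $V_e\rho_e$ contains elements $\rho_\tau$ with $\tau = v_s$ a fixed-point-free involution on $[k] = [2^e]$, so $\rho_\tau$ has support size $2^e n = kn$, which is too small to be useful directly; instead I would use products such as $\rho_{v_s}\sigma^j\rho_{v_s}\sigma^{-j}$ and the conjugates $x_r = (v_r)\rho$ together with the shift action of $\sigma$ (Lemma~\ref{lem:power of s}(d)) to produce elements of small support inside $G_e$, enough to force $G_e$ to contain a large primitive group. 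Then Theorem~\ref{thm:prim} gives that $G_e$ is primitive on $[kn]$ (since $V_e$ is primitive? — here is the catch, see below), and combined with the parity constraints from Corollary~\ref{cor:parity}, which tell us exactly which of $B_n$, $\ker(\mathrm{sgn})$, etc.\ can contain $G_e$, and with an $O'Nan$--Scott / Bochert-type argument bounding primitive groups of small minimal degree, I would conclude $G_e \geqslant \ker(\mathrm{sgn})\cap\ker(\overline{\mathrm{sgn}})$. Since $G_e \leqslant G_1$ and $G_1$ itself is $\ker(\mathrm{sgn})\cap\ker(\overline{\mathrm{sgn}})$ in the main case $n\equiv 0 \pmod 4$ (and one of the larger kernels otherwise, where a separate short parity argument pins down which kernel $G_e$ lands in), equality propagates up the chain $G_e = G_{e-1} = \dots = G_1$.

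The main obstacle is that $V_e$ is \emph{not} primitive on $[2^e]$ (it is an elementary abelian regular group, hence imprimitive for $e\geqslant 2$), so Theorem~\ref{thm:prim} does not apply to $\sh(V_e,n)$ directly — indeed Theorem~\ref{thm:productid}(1) shows $\sh(V_e, 2^f) = V_e \wr C_{f+1}$ is imprimitive when $n$ is a power of $2$, which is exactly why that hypothesis is excluded. So the real work is to show that when $n$ is \emph{not} a power of $2$, the extra flexibility in the standard shuffle $\sigma$ (whose order is governed by the non-power structure of $2^{e-1}n$ via Lemma~\ref{lem:shuffle acts}) destroys all the block systems; I expect this to require a careful hands-on argument, likely mirroring the block-elimination argument in the proof of Theorem~\ref{thm:prim} but adapted to the regular group $V_e$, using that $\sigma$ moves cards by multiplication by $2 \pmod{2^e n - 1}$ and that $2^e n - 1$ has odd part $> 1$. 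Once primitivity (and large minimal degree, or containment of a $3$-cycle-like element) is in hand, the endgame is the standard "primitive $+$ small support $\Rightarrow$ contains $\alt$" argument restricted inside $B_n$, together with the parity bookkeeping from Corollary~\ref{cor:parity} to identify the precise kernel. The exceptional rows $(4,3),(4,6),(8,3)$ and the $k=4$, $n$ odd case (4) are where the generic argument fails — $2^{e-1}n$ is $6$ or $12$, or $n$ is odd making $G_1$ bigger — and these I would simply verify computationally and by invoking the relevant rows of Table~\ref{tab:the shuffle group}.
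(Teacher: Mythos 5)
Your overall framing (reduce to $G_1=\sh(\sym(2),2^{e-1}n)$ via Theorem~\ref{dgk}, use Lemma~\ref{lem:containment} for the chain $G_e\leqslant\dots\leqslant G_1$, dispose of $(4,3),(4,6),(8,3)$ by computer) matches the paper, but the engine you propose for the generic cases does not work. Your plan is to prove that $G_e=\sh(V_e,n)$ is primitive on $[kn]$ (by a block-elimination argument replacing Theorem~\ref{thm:prim}, whose hypotheses fail for the regular group $V_e$) and then to run a minimal-degree/Bochert-type ``primitive $+$ small support $\Rightarrow$ contains $\alt$'' argument. This is aimed at a false statement: by Lemma~\ref{lem:containment} we have $G_e\leqslant G_1$, and by Theorem~\ref{dgk} every group $\sh(\sym(2),m)$ preserves central symmetry, i.e.\ $G_1\leqslant C_2\wr\sym(2^{e-1}n)$ in imprimitive action. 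Hence every $G_t$ preserves the partition of $[2^en]$ into $2^{e-1}n$ blocks of size $2$ and is \emph{imprimitive}; indeed the target group $\ker(\mathrm{sgn})\cap\ker(\overline{\mathrm{sgn}})$ is itself an imprimitive (index at most $4$) subgroup of $C_2\wr\sym(2^{e-1}n)$, nowhere near $\alt(kn)$. So no amount of ``destroying block systems'' can succeed, and the hands-on argument you defer to (``I expect this to require a careful hands-on argument'') cannot exist. The same problem leaves case (4) unproved: parity (Corollary~\ref{cor:parity}) only gives the upper bound $G_2\leqslant\ker(\mathrm{sgn})$, and you have no mechanism for the matching lower bound.

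The paper's actual argument is much lighter and avoids primitivity entirely: Lemma~\ref{lem:containment} gives not just containment but that $G_t$ \emph{normalises} $G_{t+1}$, so $G_2\trianglelefteq G_1$, and $G_2$ is nonabelian (e.g.\ $\sigma^2$ does not centralise $\langle x_{e-2},x_{e-1}\rangle$). One then only needs the normal subgroup structure of the known group $G_1$. In the main case ($2^{e-1}n\equiv 0\pmod 4$, $n$ not a power of $2$) $G_1=2^{2^{e-1}n-1}.\alt(2^{e-1}n)$ has only abelian proper nontrivial normal subgroups, forcing $G_2=G_1$, and then $G_3\trianglelefteq G_2=G_1$ forces $G_3=G_1$, and so on up to $G_e$. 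In case (4), $G_1=C_2\wr\sym(2n)$, so the nonabelian normal subgroup $G_2$ contains the derived subgroup and has index $1$, $2$ or $4$; parity rules out index $1$, and a sign computation (reading off from \cite[Table III]{DGK} that $x_{e-1}$ projects to an odd permutation of the $2n$ blocks) rules out index $4$, giving $G_2=\ker(\mathrm{sgn})$ exactly. Note also that your claim that ``$G_1$ normalises each $G_t$'' does not follow by simply iterating Lemma~\ref{lem:containment}: the lemma only gives $G_t\trianglelefteq G_{t-1}$, and normality in $G_1$ for $t\geqslant 3$ is obtained in the paper only after the equalities $G_1=G_2=\dots$ have been established step by step.
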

\begin{proof}
The structure of $G_1= \sh(\sym(2),2^{e-1}n)$ is given in Theorem~\ref{dgk}.  Note that since $e\geqslant 2$, $2^{e-1}n$ is even. Now, since $n$ is not a power of $2$, $G_1$ appears in row two, four, six or seven of Table~\ref{tab:the shuffle group}. 
First we deal with the exceptional cases.

The exceptional case in row six  is when $2^{e-1}n=6$. Here $2^e=4$, $2^en=12$, and $G_1 = \sh(\sym(2),6) \cong 2^6 \rtimes \sym(5)$. By {\sc Magma}, $\sh(V_4,3)$ has index two in $G_1$, and $G_2 \cong 2^5 \rtimes \sym(5)$.

The exceptional case in row seven is when $2^{e-1}n = 12$, so $(2^e,n)=(4,6)$ or $(8,3)$, and $G_1= 2^{11} \rtimes M_{12}$, where $M_{12}$ is  one of  Mathieu's nonabelian simple groups. 
By Lemma~\ref{lem:containment}, $G_2$  is a normal subgroup of $G_1$.  Since $\sigma^2$ does not centralise $\langle x_{e-2}, x_{e-1}\rangle$,   $G_2$ is nonabelian, and hence $G_2=G_1$. Similarly, if $e=3$, then $G_3$ is a nonabelian normal subgroup of $G_2 = G_1$, so  $G_1 = G_2 = G_3 $.

Suppose now that $2^{e-1}n \neq 6$ and $2^{e-1}n \equiv 2 \pmod 4$ (as in row four of Table~\ref{tab:the shuffle group}). Then $2^e=4$, $n$ is odd, and $G_1 = B_{2n} = 2^{2n}.\sym(2n) \cong C_2 \wr \sym(2n)$. By Lemma~\ref{lem:containment}, $G_2$ is a normal subgroup of $G_1$, and as above, $G_2$ is nonabelian. The proper nonabelian normal subgroups of $G_1$ all contain the derived subgroup of $G_1$, namely $[G_1,G_1] \sim2^{2n-1}.\alt(2n)$. Hence $G_2$ contains $[G_1,G_1]$ and so has index one, two or four in $G_1$. By Corollary~\ref{cor:parity}(3)  we have that $G_2 = \sh(V_4,n)  \leqslant \alt(4n)$, and so $G_2$ is contained in $\ker(\mathrm{sgn})$ which has index two in $B_{2n}$. Note that the unique index four subgroup of $G_1$ projects to $\alt(2n)$ in the quotient $G_1 / \mathrm O_2(G_1) \cong \sym(2n)$. To determine the index of $G_2$, we consult \cite[Table III]{DGK} - and note that, unfortunately, $2n$ corresponds to the the integer $n$ in \cite[Table III]{DGK} (because they are investigating $\sh(\sym(2),n)$ and we are investigating $G_1 = \sh(\sym(2),2n)$). We see that $\sigma$ (there denoted $I$) projects to an odd permutation in $G_1/\mathrm O_2(G_1)$ (witnessed in the table in column $2 \pmod 4$ by $\mathrm{sgn}(\overline{g})=-1$ for $g=I$) and that the element there denoted $O$ which is equal to $ x_{e-1} \sigma$ projects to an even permutation (witnessed in the table in column $2\pmod 4$ by $\mathrm{sgn}(\overline{g})=1$ for $g=O$). Thus $\mathrm{sgn}(x_{e-1}) = -1$ and so  $x_{e-1}$ also projects to an odd permutation in $G_1/\mathrm O_2(G_1)$, and since  $x_{e-1}$ lies in $G_2$, we have that $G_2$ projects to $\sym(2n)$. Hence $G_2 / \mathrm O_2(G_2) \cong (G_2\mathrm O_2(G_1))/\mathrm O_2(G_1)  \cong \sym(2n)$ and so $G_2$ has index exactly two in $G_1$ and we have $G_2 = \ker(\mathrm{sgn})$.

Now consider the case that $2^{e-1}n \equiv 0 \pmod 4$ and $n$ is not a power of two (as in row two of Table~\ref{tab:the shuffle group}). From Table~\ref{tab:the shuffle group} we have  $G_1 = 2^{2^{e-1}n-1}.\alt(2^{e-1}n)$. Since $2^{e-1}n$ is even, we see that $G_1$ has exactly two proper non-trivial normal subgroups, both are abelian and have order $2$ and $2^{2^{e-1}n-1}$ respectively. By Lemma~\ref{lem:containment},  $G_2 $ is a normal subgroup of $G_1$. Since $G_2$ is nonabelian, we have $G_2=G_1$. If $e \geqslant 3$, then by Lemma~\ref{lem:containment}, $G_3\leqslant G_1$, and since $G_1=G_2$, we have $G_3 \leqslant G_2$.  Then, again by Lemma~\ref{lem:containment}, $G_3$ is a normal subgroup of $G_2=G_1$, and since $G_3$ is nonabelian, we conclude that $G_3=G_2=G_1$. Continuing in this fashion, we have $G_1=G_2=\dots=G_e$. This completes the proof of the theorem.
%
\end{proof}
 
 We are now ready to prove Corollary~\ref{cor:k=2^e} which we also restate for convenience.
 
\begin{cor} 
 Suppose that $n$ is not a power of 2 and that $k = 2^e$ for some $e\geqslant 2$. Then $\sh(\sym(k),n)$ is the  alternating group  or the symmetric group if $n$ is even or odd, respectively.
\end{cor}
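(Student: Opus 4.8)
The plan is to leverage Theorem~\ref{lem:identify Gr} together with the containment $G_e = \sh(V_e,n) \leqslant \sh(\sym(2^e),n)$, which holds because $V_e \leqslant \sym(2^e)$ and hence all generators $\rho_\tau\sigma$ with $\tau \in V_e$ lie in $\sh(\sym(2^e),n)$. Write $G = \sh(\sym(2^e),n)$ and recall $k = 2^e$, $n$ not a power of $2$, $e \geqslant 2$. By Corollary~\ref{cor:parity}, $G \leqslant \alt(kn)$ precisely when $n$ is even (since $k \equiv 0 \pmod 4$ forces conditions (1) or (2) to apply according to the parity of $n$, while for odd $n$ the group $P = \sym(k) \not\leqslant \alt(k)$ rules out (3)). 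So it suffices to show $G$ contains $\alt(kn)$, and then the parity dichotomy pins down which of $\alt(kn)$, $\sym(kn)$ it equals.

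First I would handle the generic case. By Theorem~\ref{lem:identify Gr}(5), outside the finitely many exceptional pairs $(k,n) \in \{(4,3),(4,6),(8,3)\}$ and the family $k = 4$ with $n \geqslant 5$ odd, we have $G_e = \ker(\mathrm{sgn}) \cap \ker(\overline{\mathrm{sgn}})$ as a subgroup of $B_{2^{e-1}n}$ acting on $kn$ points — but wait, $G_e$ here is $\sh(V_e,n)$ which sits inside $B_n$-type structure on $kn = 2^e n$ points; more precisely $G_e$ is $\ker(\mathrm{sgn}) \cap \ker(\overline{\mathrm{sgn}})$ in the Weyl group $B_{kn/2}$. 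This group is $2^{kn/2 - 1}.\alt(kn/2)$, which does \emph{not} contain $\alt(kn)$. So the containment $G_e \leqslant G$ alone is not enough: I need $G$ to be genuinely larger. The right move is: $G \geqslant G_e$ and $G \geqslant \rho_\tau$ for $\tau \in \sym(k)$ a transposition, which has support size $n$ on $[kn]$. By Theorem~\ref{thm:prim}, $G$ is primitive on $[kn]$ (as $k \geqslant 3$). Now I would invoke a result guaranteeing that a primitive group containing an element of small support must contain $\alt(kn)$: specifically, a primitive group of degree $m$ containing a nontrivial element moving at most $\sqrt{m}\,(\sqrt{m}-1)$ or so points — or more simply, an element of prime-power support or a fixed-point-heavy element — is $\alt$ or $\sym$. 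Actually the cleanest route: $G_e$ contains elements of support $4$ (products of two "adjacent transpositions" in $B$), hence $G$ is a primitive group containing a permutation of support $4$; by the classical Jordan-type theorem (see, e.g., the bound $\mathrm{mindeg} \geqslant (\sqrt m - 1)$ style results, or Wielandt), for $kn$ large enough such a $G$ must contain $\alt(kn)$. The finitely many small cases left over are checked by {\sc Magma}.

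The main obstacle I anticipate is making the "primitive plus small-support element implies $\alt/\sym$" step rigorous with a citable bound, and ensuring the threshold it requires is below the range already covered by computation. A robust alternative that avoids minimal-degree subtleties: show directly that $G$ is $2$-transitive — but this seems false in general here, or at least not obviously available. Better: use that $G \geqslant G_e = \mathrm{O}_2 . \alt(kn/2)$ contains $\alt(kn/2)$ acting on half the points (in a specific imprimitive-on-$kn$ fashion), combine with a transposition-image $\rho_\tau$ of support $n$ coming from $\sym(k)$ that "mixes" the two halves, and argue that the group generated is transitive (Lemma~\ref{lemma:transitive1}) and primitive (Theorem~\ref{thm:prim}) and contains a $3$-cycle or a suitably small-support element, then apply the theorem of Jordan that a primitive group containing a $p$-cycle (for small prime $p$, $p \leqslant n - 3$) contains $\alt$. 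For the residual exceptional pairs $(4,3)$, $(4,6)$, $(8,3)$ and $k=4$, $n$ odd with small $n$, and to clear the threshold of whatever Jordan-type bound is used, I would cite the {\sc Magma} verification already recorded in the paper (for $k \leqslant 13$, $n \leqslant 1000$) or run the specific checks. I would then conclude: $G \geqslant \alt(kn)$ always, and $G = \alt(kn)$ iff $n$ even, $G = \sym(kn)$ iff $n$ odd, by Corollary~\ref{cor:parity}.
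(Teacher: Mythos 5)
Your overall skeleton coincides with the paper's: use $G_e=\sh(V_e,n)\leqslant G:=\sh(\sym(k),n)$, identify a large known subgroup of $G_e$ via Theorem~\ref{lem:identify Gr}, invoke primitivity of $G$ (Theorem~\ref{thm:prim}), force $\alt(kn)\leqslant G$ by a Jordan-type ``small support'' argument, settle a few exceptional pairs by {\sc Magma}, and read off $\alt$ versus $\sym$ from Corollary~\ref{cor:parity} (your parity analysis is correct). The genuine gap is exactly the step you flag as the ``main obstacle'': you never supply a valid, applicable theorem for the elements you actually have. The first bound you propose --- a primitive group of degree $m$ containing a nontrivial element moving at most roughly $\sqrt m(\sqrt m-1)$ points must contain $\alt(m)$ --- is false: primitive product-action groups such as $\sym(\ell)\wr\sym(2)$ on $\ell^2$ points contain elements of support $2\ell$, as the paper's own Lemma~\ref{lem:mindeg} shows. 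Your fallback, Jordan's theorem for a $p$-cycle, cannot be applied either: every element of $L=\ker(\mathrm{sgn})\cap\ker(\overline{\mathrm{sgn}})$ has support equal to a union of blocks of size $2$, so $L$ contains no $3$-cycle (indeed no single cycle of odd prime length). What remains is your support-$4$ involution, which $L$ does contain; but ``primitive $+$ element of support $4$ $+$ large degree $\Rightarrow$ contains $\alt$'' is precisely the kind of assertion that needs a carefully chosen citation (there are primitive exceptions in small degree, e.g.\ $\PSL(2,5)$ in degree $6$ and $\AGL(3,2)$ in degree $8$), and extracting an explicit threshold from CFSG-based minimal-degree classifications would itself require ruling out the product-action and subset-action families. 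As written, the decisive implication is unproved.

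The paper closes this gap by choosing different witnesses: a $5$-cycle in the top group of $B_{2^{e-1}n}$ lies in $L$ and acts on $[kn]$ with cycle type $5^2$, and Wielandt's Theorem 13.10 is applied with $p=5$, $q=2$ (prime order $5$, two $5$-cycles --- a configuration your support-$4$ involutions do not provide) to a maximal subgroup $M$ of $\alt(kn)G$ containing $G$; the conclusion that $M$ is imprimitive contradicts the primitivity of $G\leqslant M$ given by Theorem~\ref{thm:prim}, so $\alt(kn)\leqslant G$. Two further slips: your generic argument is phrased only for case (5) of Theorem~\ref{lem:identify Gr}, but the infinite family $k=4$, $n\geqslant5$ odd (case (4), where $G_2=\ker(\mathrm{sgn})\supseteq L$) must be handled by the same argument, not by finitely many computations; and the computations recorded in Section~\ref{sec: summary} concern $\sh(C_k,n)$ for $k\leqslant13$, $n\leqslant1000$, so they cannot by themselves ``clear the threshold'' --- though if your Jordan-type bound depended only on the (constant) support size, only finitely many degrees would need checking, which is how the paper treats $(4,3)$, $(4,6)$ and $(8,3)$.
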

\begin{proof}
Set $G=\sh(\sym(k),n)$ and $G_e=\sh(V_{2^e},n)$. Note that the restrictions on $e$ and $n$ imply that $2^en\geqslant 12$. The structure of $G_e$ is given by  Theorem~\ref{lem:identify Gr}. For $(k,n)=(4,3)$, $(4,6)$ or $(8,3)$, we verify that the result holds using {\sc Magma}. Thus we may assume one of case (4) or (5) of Theorem~\ref{lem:identify Gr} holds. In particular, we have that $G_e$ contains the derived subgroup  $L=\ker(\mathrm{sgn}) \cap \ker(\overline{\mathrm{sgn}})=2^{2^{e-1}n-1}.\alt(2^{e-1}n)$ of $G_1$. Note that $L$ is a transitive imprimitive subgroup of $\sym(2^en)$ preserving a partition with $2^{e-1}n$ parts of size $2$. 
Suppose that $\alt(2^en)G \neq G$   and let $M$ be a maximal subgroup of $\alt(2^en)G$ containing $G$. Since $M$ contains $G$, we have that $M$ contains $L$. Now  $L$ contains elements of cycle type $5^2$ and so, since we have dealt with the case $2^en = 12$ above, applying   \cite[Theorem 13.10]{wielandt} with $q=2$ and $p=5$ yields  that $M$  is imprimitive. On the other hand, Theorem~\ref{thm:prim} implies that $G$ is primitive, a contradiction. Hence $\alt(2^en)G = G$ and so  $G$ contains $\alt(2^en)$. Since $e \geqslant 2$, $k\equiv 0 \pmod{4}$, so Corollary~\ref{cor:parity} shows that $G \leqslant \alt(2^en)$ if and only if $n$ is even.
\end{proof}

\end{document}